\theoremstyle{plain}
\newtheorem{theorem}{Theorem}[section]
\newtheorem{definition}{Definition}[section]
\newtheorem{lemma}{Lemma}[section]
\newtheorem{remark}{Remark}[section]
\newtheorem{proposition}{Proposition}[section]
\newtheorem{corollary}{Corollary}[section]
\newtheorem{notation}{Notation}[section]
\numberwithin{equation}{section}
\apptocmd{\sloppy}{\hbadness 10000\relax}{}{}
\newenvironment{psmallmatrix}
{\left(\begin{smallmatrix}}
	{\end{smallmatrix}\right)}
\let\today\relax
\def\ps@pprintTitle{%
	\let\@oddhead\@empty
	\let\@evenhead\@empty
	\def\@oddfoot{\footnotesize\itshape
		 \hfill\today}%
	\let\@evenfoot\@oddfoot
}
\begin{document}
	
\begin{frontmatter}
		
	\title{Classification of Hopf superalgebras associated with quantum special linear superalgebra at roots of unity using Weyl groupoid}
	
	\author[1]{Alexander Mazurenko \footnote{Corresponding Author: Alexander Mazurenko, mazurencoal@gmail.com}}
	\ead{mazurencoal@gmail.com}
	
	\author[2]{Vladimir A. Stukopin}
	\ead{stukopin@mail.ru}
	
	\address[1]{MCCME (Moscow Center for Continuous Mathematical Education)}
	\address[2]{MIPT (Moscow Institute of Physics and Technology) \\ SMI of VSC RAS (South Mathematical Institute of Vladikavkaz Scientific Center of Russian Academy of Sciences) \\ MCCME (Moscow Center for Continuous Mathematical Education)}
	
	\begin{abstract}
				
		We summarize the definition of the Weyl groupoid using supercategory approach in order to investigate quantum superalgebras at roots of unity.  We show how the structure of a Hopf superalgebra on a quantum superalgebra is determined by the quantum Weyl groupoid.  The Weyl groupoid of $\mathfrak{sl}(m|n)$ is constructed to this end as some supercategory. We prove that in this case quantum superalgebras associated with Dynkin diagrams are isomorphic as superalgebras. It is shown how these quantum superalgebras considered as Hopf superalgebras are connected via twists and isomorphisms. We explicitly construct these twists  using the Lusztig isomorphisms considered as elements of the Weyl quantum groupoid.  We build a PBW basis for each quantum superalgebra, and investigate how quantum superalgebras are connected with their classical limits, i. e. Lie superbialgebras. We find explicit multiplicative formulas for universal $R$-matrices, describe relations between them for each realization and classify Hopf superalgebras and triangular structures for the quantum superalgebra $U_q(\mathfrak{sl}(m|n))$.
		
	\end{abstract}
	
	\begin{keyword}
	 Dynkin diagram \sep Lie superalgebras \sep Lie superbialgebras \sep Lusztig isomorphisms \sep PBW basis \sep quantum superalgebras \sep universal R-matrix \sep quantum Weyl groupoid \sep supercategory
	
	 MSC-class: 16W35 (Primary) 16W55, 17B37, 81R50, 16W30 (Secondary)
	\end{keyword}
	
\end{frontmatter}

\section{Introduction}
\label{Int}

In this paper we investigate quantum deformation \cite{D87} of the Lie superalgebra at roots of unity in the case of quantum superalgebra $sl(m|n)$, where $m \ne n$ and $m, n > 0$. Our considerations are based on a Weyl groupoid, which we define as some supercategory, see Definition \ref{def:WG}. We show how to associate quantum superalgebras at roots of unity to Dynkin diagrams in Section \ref{subs:dqsru}. Although the definition of the Weyl groupoid used by us is a slight modification of the definition given in  \cite{HY08}, it seems to us to be more suitable for solving further problems of geometrization and categorification of the main objects of the theory of quantum superalgebras. One of our main results is Theorem \ref{th:ltisom} where we show that the two realizations are isomorphic as superalgebras. We investigate how to build a PBW basis for each realization in Theorem \ref{th:pbwbasis}. In Theorem \ref{th:linkHosalgstrd} we show how the two realizations are connected as Hopf superalgebras. We also compute universal $R$-matrices and describe relations between them for each realization using the formulas for the twists corresponding to reflections in the Weyl groupoid.

Our work is motivated by results obtained in \cite{HY08} and reformulated in \cite{CH09}. In these papers is defined a Weyl groupoid. In \cite{S11} is investigated a Weyl groupoid related to the Lie superalgebras. The case of quantum superalgebras is considered in \cite{HSTY08}. We were inspired also by results obtained in \cite{KT94}, \cite{LS91} and \cite{LSS93}. We also note that this paper generalizes and refines results obtained in our previous article \cite{MS20}.

We study the structures of Hopf superalgebras and triangular structures defined by universal R-matrices and describe the connection between them. This connection is described by twists in terms of the elements of the Weyl quantum groupoid. Thus, the Weyl groupoid plays a central and connecting role in our work. We define the Weyl groupoid in a form convenient for the purpose of our work. Nonetheless, our definition is essentially equivalent to the generally accepted one.

It should be emphasized that the central role in our reasoning is played by the construction of Weyl quantum groupoid.  Following V. Serganova, H. Yamane, I. Heckenberger  ideas, we define the Weyl groupoid as a supercategory in generality sufficient for our purposes of studying quantum superalgebras. Based on this abstract definition, we give an explicit realization of the Weyl quantum groupoid in terms of isomorphisms of quantum superalgebras generated by isomorphisms, which are induced by reflections (relatively both  even and odd roots).  We interpret the elements of the quantum  Weyl  groupoid as adjoint maps given by the elements of the quantum superalgebra $U_q(sl(m|n)$ (or some its extension). Thus, we associate the elements of the quantum Weyl groupoid with the group-like elements of the quantum superalgebra.  Among other things, we note that these elements satisfy some analogue of the braid group relations and define braid groupoid structure. It should be noted that the classical Weyl groupoid, acting by reflections with respect to odd roots, permutes non-conjugate Borel subalgebras. Thus, it permutes the corresponding non-isomorphic Manin triples defined by these Borel subalgebras. This gives a natural description and, moreover, a classification of the structures of Lie  bisuperalgebras on superalgebra $sl(m|n)$. We confine ourselves here to a detailed consideration of the case $sl(2|1)$. Similarly, the action of the quantum Weyl groupoid allows one to classify the structures of the Hopf superalgebra on the quantized universal enveloping superalgebra $U_q(sl(m|n))$. We describe a connection between the structures of the Hopf superalgebras and give explicit formulas for both twists which intertwines different comultiplication operations and for universal $R$-matrices of various Hopf superalgebra structures on $U_q(sl(m|n))$. We consider in detail the case $U_q(sl(2|1))$ We explicitly describe the connection of these constructions with the Weyl quantum groupoid.

We consider the above constructions by specializing the quantization parameter $q$ with a value equal to the odd root of unity $q^n=1, n=2m+1$.

To make the basic constructions as explicit as possible, we restrict ourselves to considering a particular case and investigate only the Weyl groupoid of the Lie superalgebra $sl(m|n)$, where $m \ne n$ and $m, n > 0$. Nonetheless, all our considerations can be adopt to the more general case of an arbitrary basic Lie superalgebra. Thus, our definition is based on the definition of the Weyl groupoid given in \cite{HY08}, \cite{CH09}, \cite{S11} and contains the classical and quantum versions of the Weyl groupoid. We also give an explicit construction of the Weyl quantum groupoid using Lustig isomorphisms in the spirit of the \cite{LS91}, \cite{LSS93} and \cite{L10} (see also \cite{H10}, \cite{AA17}). Using this explicit description of the Weyl quantum  groupoid  we investigate Hopf superalgebras  structures and triangular structures associated with Dynkin diagrams and show how they are connected via twists and isomorphisms.

Note that our initial goal, partially motivated by applications, was to study the Hopf structures of quantum affine superalgebras. The second goal  is to try to partially clarify the categorical and geometric nature of quantum superalgebras and to show how these structures are determined by simple combinatorial data. But we decided to introduce these important and general constructions based on the Weyl quantum groupoid in the simplest possible form, so that technical difficulties would not impede the understanding of simple basic ideas.

It is worth mentioning that we use the traditional language of the theory of quantum groups (which are deformations of Lie bialgebras), but our constructions are equivalent to the constructions used by algebraists in the abstract theory of Hopf and Nichols algebras (see \cite{AA17}, \cite{AngY15}, \cite{AY11}, \cite{H10}).

We will now give an outline of this paper. In Section \ref{subs:dsl} we recall basic facts about Lie superalgebras, and remind some categorical definitions about supercategories. Next we describe Lie superalgebra $sl(2|1)$ and show how to endow it with the Lie superbialgebra structure.

Section \ref{sec:WG} is divided in three parts. In Subsection \ref{WGD} we give the definition of Cartan scheme, use it to construct a category called Weyl groupoid and show how to build $\mathcal{W}(\mathcal{R})$ the Weyl groupoid of the Lie superalgebra $sl(2|1)$ in Subsection \ref{sc:Wgsl}. Next in Subsection \ref{sc:CcLs} we construct a faithful covariant functor from the $\mathcal{W}(\mathcal{R})$ to the category of Lie superalgebras associated with Dynkin diagrams. Moreover, we show how to endow these Lie superalgebras with the structure of Lie superbialgebras and investigate how they are related to each other.

Section \ref{sec:Wgqsru} is divided in six parts. In Subsection \ref{subs:ques} we recall the definition of the quantized universal enveloping superalgebras. Next in Subsection \ref{subs:dqsru} it is shown how to associate with Dynkin diagram quantum superalgebra at roots of unity. Subsection \ref{sec:chst} contains auxiliary categorical definitions and results about Hopf superalgebras. In Subsection \ref{subs:LTI} we construct a faithful covariant functor from the $\mathcal{W}(\mathcal{R})$ to the category of superalgebras associated with Dynkin diagrams and prove that these superalgebras are isomorphic. In Subsection \ref{subs:pbwb} we show how to build a PBW basis for these superalgebras in cas of $sl(2|1)$ (see, also \cite{HY10}). In Subsection \ref{subs:hssurm} we investigate braided Hopf superalgebras associated with Dynkin diagrams and show how they are connected via twists and isomorphisms. The central result is Theorem \ref{th:sufcondaut} which allows us to classify all Hopf superalgebras which we investigate.

In this paper we use the following notation. Let $\mathbb{N}$, $\mathbb{Z}$ and $\mathbb{Q}$ denote the sets of natural numbers, integers and rational numbers,  respectively. Let $\Bbbk$ be an algebraically closed field of characteristic zero. We also use Iverson bracket defined by $ [P] = \begin{cases} 1 \text{ if } P \text{ is true;} \\ 0 \text{ otherwise}, \end{cases}$ where $P$ is a statement that can be true or false.

\section{Special Lie superalgebra $sl(2|1)$}
\label{subs:dsl}

As for the terminology concerning Lie superalgebras, we refer to \cite{K77}, \cite{FSS89}.

A super vector space (superspace) $V$ over field $\Bbbk$ is a $\Bbbk$-vector space endowed with a $\mathbb{Z}_2$-grading, in other words, it writes as a direct sum of two vector spaces $V = V_{\bar{0}} \oplus V_{\bar{1}}$ such as $V_{\bar{0}}$ is the even part and $V_{\bar{1}}$ is the odd part. Define a parity function $\lvert \cdot \rvert: V \to \mathbb{Z}_2$ for a homogeneous element $x$ in a superspace by $|x| = \bar{a}$, where $v \in V_{\bar{a}}$ and $\bar{a} \in \mathbb{Z}_2$. A superalgebra $A$ over the field $\Bbbk$ is a $\mathbb{Z}_2$-graded algebra $A = A_{\bar{0}} \oplus A_{\bar{1}}$ over $\Bbbk$. A Lie superalgebra is a superalgerba $\mathfrak{g} = \mathfrak{g}_{\bar{0}} \oplus \mathfrak{g}_{\bar{1}}$ with the bilinear bracket (the super Lie bracket) $[\cdot, \cdot]: \mathfrak{g} \times \mathfrak{g} \to \mathfrak{g}$ which satisfies the following axioms, with homogeneous $x, y, z \in \mathfrak{g}$:
\[ [x, y] = - (-1)^{|x| |y|} [y, x], \]
\[ [x,[y,z]] = [[x,y],z] + (-1)^{|x| |y|} [y, [x,z]]. \]
A Lie superbialgebra $(\mathfrak{g}, [\cdot, \cdot], \delta)$ (see \cite{GZB91}, \cite{K06}) is a Lie superalgebra $(\mathfrak{g}, [\cdot, \cdot])$ with a skew-symmetric linear map $\delta: \mathfrak{g} \to \mathfrak{g} \otimes \mathfrak{g}$ that preserves the $\mathbb{Z}_{2}$-grading and satisfies the following conditions:
\begin{equation}
	\label{eq:bisuli1}
	( \delta \otimes id_{\mathfrak{g}} ) \circ \delta - (id_{\mathfrak{g}} \otimes \delta) \circ \delta = (id_{\mathfrak{g}} \otimes \tau_{\mathfrak{g}, \mathfrak{g}}) \circ (\delta \otimes id) \circ \delta,
\end{equation}
\begin{equation}
	\label{eq:bisuli2}
	\delta([x,y]) = ( ad_{x} \otimes id_{\mathfrak{g}} + id_{\mathfrak{g}} \otimes ad_{x} ) \delta(y) - ( ad_{y} \otimes id_{\mathfrak{g}} + id_{\mathfrak{g}} \otimes ad_{y} ) \delta(x),
\end{equation}
where $x, y \in \mathfrak{g}$, $id_{\mathfrak{g}}$ is the identity map on $\mathfrak{g}$, $ad_{x} y = [x,y]$ is the adjoint and $\tau_{V, W}: V \otimes W \to W \otimes V$ is the linear function given by
\begin{equation}
	\label{eq:taudef}
	\tau_{V, W}(v \otimes w) = (-1)^{|v| |w|} w \otimes v
\end{equation}
for homogeneous $v \in V$ and $w \in W$.

We use the well-known result (for more detail see \cite{Y91}, \cite{Y94}).

\begin{proposition}
	\label{pr:supliealgdef}
	Let $\mathfrak{g}$ be a Lie superalgebra of type A with associated Cartan matrix $(A = (a_{ij})_{i,j \in I}, \tau)$, where $\tau$ is a subset of $I = \{1,2,...,n\}$. Then $\mathfrak{g}$ is generated by $h_{i}$, $e_{i}$ and $f_{i}$ for $i \in I$ (whose parities are all even except for $e_{t}$ and $f_{t}$, $t \in \tau$, which are odd), where the generators satisfy the relations
	\[ [h_{i}, h_{j}] = 0, \; [h_{i}, e_{j}] = a_{ij} e_{j}, \; [h_{i}, f_{j}] = - a_{ij} f_{j}, \; [e_{i}, f_{j}] = \delta_{ij} h_{i} \]
	and the "super classical Serre-type" relations
	\[ [e_{i}, e_{i}] = [f_{i}, f_{i}] = 0, \; \text{if} \; i \in \tau, \]
	\[ (ad_{e_{i}})^{1+|a_{ij}|} e_{j} = (ad_{f_{i}})^{1+|a_{ij}|} f_{j} = 0, \; \text{if} \; i \ne j, \; \text{and} \; i \notin \tau, \]
	\[ [ [ [e_{m-1}, e_{m}], e_{m+1} ] , e_{m} ] = [ [ [f_{m-1}, f_{m}], f_{m+1} ] , f_{m} ] = 0, \; \text{if} \; m-1, m, m + 1 \in I \; \text{and} \; m \in \tau, \]
	where for $x \in \mathfrak{g}$ the linear mapping $ad_{x}: \mathfrak{g} \to \mathfrak{g}$ is defined by $ad_{x}(y) = [x,y]$ for all $y \in \mathfrak{g}$.
\end{proposition}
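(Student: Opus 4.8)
The plan is to verify that the given presentation is indeed a presentation of $\mathfrak{g}$ by exhibiting the generators and relations inside $\mathfrak{g}$ and then checking that no further relations are needed. First I would recall the standard construction: from the Cartan datum $(A,\tau)$ one builds a contragredient Lie superalgebra $\widetilde{\mathfrak{g}}$ freely on symbols $h_i,e_i,f_i$ ($i\in I$) modulo the relations $[h_i,h_j]=0$, $[h_i,e_j]=a_{ij}e_j$, $[h_i,f_j]=-a_{ij}f_j$, $[e_i,f_j]=\delta_{ij}h_i$, with the prescribed parities; then $\mathfrak{g}$ is the quotient of $\widetilde{\mathfrak{g}}$ by the maximal graded ideal intersecting the Cartan subalgebra trivially (Kac's construction). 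So the content of the proposition is that for type $A$ the ideal of ``extra'' relations is generated by the listed super Serre-type relations. I would first check that all the listed Serre relations do hold in $\mathfrak{g}$ itself, which is a direct computation with the root system of $\mathfrak{sl}(m|n)$: the elements $[e_i,e_i]$ for $i\in\tau$ vanish because $2\alpha_i$ is not a root when $\alpha_i$ is isotropic odd; the elements $(\mathrm{ad}\,e_i)^{1+|a_{ij}|}e_j$ vanish because $\alpha_j+(1+|a_{ij}|)\alpha_i$ is not a root; and the quartic relation vanishes because the corresponding weight $\alpha_{m-1}+2\alpha_m+\alpha_{m+1}$ is not a root (this is the extra Serre relation special to the odd non-isotropic-free case, i.e.\ the analog of the Grothendieck--Serre relation that fails to follow from the quadratic ones when $m\in\tau$ with two neighbours).

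Next I would argue the converse, that these relations suffice. The cleanest route is to invoke the standard Serre-type presentation theorem for basic classical Lie superalgebras of type $A$ due to Yamane (cited as \cite{Y91}, \cite{Y94} in the excerpt): one shows that the Lie superalgebra $\mathfrak{g}'$ defined by the generators and the listed relations surjects onto $\mathfrak{g}$ (because all relations hold in $\mathfrak{g}$, by the previous paragraph), and that this surjection is injective. For injectivity one uses the triangular decomposition $\mathfrak{g}'=\mathfrak{n}'_-\oplus\mathfrak{h}'\oplus\mathfrak{n}'_+$ coming from the defining relations, reduces to showing $\mathfrak{n}'_+\to\mathfrak{n}_+$ is injective, and then applies the same weight/root-combinatorial analysis: every defining relation kills exactly the ``non-root'' weight spaces, and one checks by induction on height, using the Chevalley-type commutation relations and a Jacobi-identity bookkeeping, that the higher non-root weight spaces are already forced to vanish by the low-degree relations. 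Equivalently, one can cite Yamane's theorem directly and just record that the quartic relation is needed precisely to handle the weight $\alpha_{m-1}+2\alpha_m+\alpha_{m+1}$.

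The main obstacle is the injectivity half, specifically the verification that the single quartic relation (together with the quadratic ones) already annihilates every non-root weight space of $\mathfrak{n}'_+$; this is the subtle point in the super case, since unlike the non-super Serre presentation the quadratic relations alone do not generate the whole defining ideal once an odd non-isotropic simple root with two neighbours is present. In a write-up I would either carry out the height-induction explicitly for $\mathfrak{sl}(m|n)$ (using that its positive roots are $\varepsilon_i-\varepsilon_j$ and the weights appearing are easy to enumerate), or — more economically for the purposes of this paper — cite \cite{Y91,Y94} for the presentation and merely verify in the text that the listed relations are exactly the ones appearing there for type $A$, emphasizing the role of the quartic relation when $m\in\tau$. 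The remaining steps (that the $h_i$ span a Cartan, that the Chevalley relations give the triangular decomposition, and that the parities are consistent) are routine and can be stated without detailed calculation.
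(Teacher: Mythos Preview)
Your proposal is sound, but it does considerably more work than the paper itself. In the paper this proposition is not proved at all: it is simply stated as a ``well-known result'' with a reference to \cite{Y91,Y94}. So your suggestion at the end --- to cite Yamane and merely verify that the listed relations match the ones appearing there for type $A$ --- is exactly what the paper does, and is all that is expected here. The detailed sketch you give (Kac's contragredient construction, checking the relations hold via the root system, and the injectivity argument via triangular decomposition and height induction) is correct and is essentially how Yamane's theorem is proved, but none of it appears in the paper.
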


Denote by $\mathfrak{n}^{+}$ (resp. $\mathfrak{n}^{-}$) and $\mathfrak{h}$ the subalgebra of $\mathfrak{g}(A, \tau)$ generated by $e_{1}$, $...$, $e_{n}$ (resp. $f_{1}$, $...$, $f_{n}$) and $h_{1}$, $...$, $h_{n}$. Then define by $\mathfrak{b}^{+} = \mathfrak{h} \oplus \mathfrak{n}^{+}$ (resp. $\mathfrak{b}^{-} = \mathfrak{h} \oplus \mathfrak{n}^{-}$) the positive Borel subalgebra (resp. the negative Borel subalgebra) of $\mathfrak{g}(A, \tau)$.

We remind some categorical definitions. Our notations here follow \cite{BE17a} (see also \cite{BE17b}). Let $\mathcal{SV}ec$ denote the category of superspaces and all (not necessarily homogeneous) linear maps. Set $\underline{\mathcal{SV}ec}$ to be the subcategory of $\mathcal{SV}ec$ consisting of all superspaces but only the even linear maps (superspace morphisms). The tensor product equips $\underline{\mathcal{SV}ec}$ with a monoidal structure, and the map $u \otimes v \to (-1)^{|u| |v|} v \otimes u$ makes $\underline{\mathcal{SV}ec}$ into a strict symmetric monoidal category.

\begin{definition}
	\label{def:scsmcSL}
	\hspace{1em}
	\normalfont
	\begin{enumerate}
	\item A supercategory means a category enriched in $\underline{\mathcal{SV}ec}$, i. e. each morphism space is a superspace and composition induces an even linear map. A superfunctor between categories is a $\underline{\mathcal{SV}ec}$-enriched functor, i. e. a functor $F: \mathcal{A} \to \mathcal{B}$ such that the function $\text{Hom}_{\mathcal{A}}(\lambda, \mu) \to \text{Hom}_{\mathcal{B}}(F \lambda, F \mu)$, $f \to F f$ is an even linear map for all $ \lambda, \mu \in \text{Obj}(\mathcal{B})$.
	
	\item For any supercategory $\mathcal{A}$, the underlying category $\underline{\mathcal{A}}$ is the category with the same objects as $\mathcal{A}$ but only its even morphisms.
	
	\item Let $\text{sLieAlg}$ be the supercategory which objects are Lie superalgebras over field $\Bbbk$. A morphism $f \in \text{Hom}_{\text{sLieAlg}}(V,W)$ between Lie superalgebras $(V,[\cdot,\cdot]_{V})$ and $(W,[\cdot,\cdot]_{W})$ is a linear map of the underlying vector spaces such that $f([x,y]_{V}) = [f(x),f(y)]_{W}$ for all $x,y \in V$.
	
	\item Let $\text{sBiLieAlg}$ be the supercategory which objects are Lie superbialgebras over field $\Bbbk$. A morphism $f \in \text{Hom}_{\text{sLieAlg}}(V,W)$ between Lie superbialgebras $(V, [\cdot,\cdot]_{V}, \delta_{V})$ and $(W, [\cdot,\cdot]_{W}, \delta_{W})$ is a linear map of the underlying vector spaces such that $f([x,y]_{V}) = [f(x),f(y)]_{W}$ and $(f \otimes f) \circ \delta_{V}(x) = \delta_{W} \circ f(x)$ for all $x,y \in V$.
	
	\end{enumerate}
\end{definition}

We also need the following general result.

\begin{proposition}
	\label{pr:cobraindisomLiebi}
	Let $f \in \text{Hom}_{\underline{\text{sLieAlg}}}(\mathfrak{g_1}, \mathfrak{g_2})$ be an isomorphism. Suppose that $\mathfrak{g_1}$ is a Lie superbialgebra with a skew-symmetric even linear map $\delta_{\mathfrak{g_1}}: \mathfrak{g_{1}} \to \mathfrak{g_{1}} \otimes \mathfrak{g_{1}}$ which satisfies \eqref{eq:bisuli1} - \eqref{eq:bisuli2}. Then $f$ induces a Lie superbialgebra structure on $\mathfrak{g_2}$, where a skew-symmetric even linear map $\delta_{\mathfrak{g_{2}}}: \mathfrak{g_{2}} \to \mathfrak{g_{2}} \otimes \mathfrak{g_2}$ which satisfies \eqref{eq:bisuli1} - \eqref{eq:bisuli2} is defined by
	\[ \delta_{\mathfrak{g_{2}}} := (f \otimes f) \circ \delta_{\mathfrak{g_{1}}} \circ f^{-1}. \]
\end{proposition}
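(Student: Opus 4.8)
The proof will be a transport-of-structure argument: every defining identity of a Lie superbialgebra is carried from $\mathfrak{g_1}$ to $\mathfrak{g_2}$ along the isomorphism $f$. Two features of $f$ make this work. First, $f$ is \emph{even}, so it is compatible with all the Koszul signs: for each $k$ the map $f^{\otimes k}$ commutes with any permutation of tensor factors built from $\tau$; in particular $\tau_{\mathfrak{g_2},\mathfrak{g_2}}\circ(f\otimes f)=(f\otimes f)\circ\tau_{\mathfrak{g_1},\mathfrak{g_1}}$ and $(\id_{\mathfrak{g_2}}\otimes\tau_{\mathfrak{g_2},\mathfrak{g_2}})\circ f^{\otimes 3}=f^{\otimes 3}\circ(\id_{\mathfrak{g_1}}\otimes\tau_{\mathfrak{g_1},\mathfrak{g_1}})$. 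Second, $f$ is a Lie superalgebra homomorphism, so $f\circ ad_x=ad_{f(x)}\circ f$ for every $x\in\mathfrak{g_1}$, hence $(f\otimes f)\circ(ad_x\otimes\id_{\mathfrak{g_1}}+\id_{\mathfrak{g_1}}\otimes ad_x)=(ad_{f(x)}\otimes\id_{\mathfrak{g_2}}+\id_{\mathfrak{g_2}}\otimes ad_{f(x)})\circ(f\otimes f)$. Both are verified by a one-line computation on homogeneous elements, the only sign involved being the one carried by $\tau$ (resp. by $ad_x$ when $x$ is odd), and it matches on the two sides precisely because $|f(u)|=|u|$. Granting this, $\delta_{\mathfrak{g_2}}:=(f\otimes f)\circ\delta_{\mathfrak{g_1}}\circ f^{-1}$ is even and preserves the $\mathbb{Z}_2$-grading (since $f$, $f^{-1}$ and $\delta_{\mathfrak{g_1}}$ do), and skew-symmetry is immediate:
\[ \tau_{\mathfrak{g_2},\mathfrak{g_2}}\circ\delta_{\mathfrak{g_2}}=(f\otimes f)\circ\tau_{\mathfrak{g_1},\mathfrak{g_1}}\circ\delta_{\mathfrak{g_1}}\circ f^{-1}=-(f\otimes f)\circ\delta_{\mathfrak{g_1}}\circ f^{-1}=-\delta_{\mathfrak{g_2}}. \]

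For the co-Jacobi identity \eqref{eq:bisuli1}, the key observation is that inserting $f^{-1}\circ f=\id$ between the two copies of $\delta$ gives
\[ (\delta_{\mathfrak{g_2}}\otimes\id_{\mathfrak{g_2}})\circ\delta_{\mathfrak{g_2}}=f^{\otimes 3}\circ(\delta_{\mathfrak{g_1}}\otimes\id_{\mathfrak{g_1}})\circ\delta_{\mathfrak{g_1}}\circ f^{-1}, \]
and likewise $(\id_{\mathfrak{g_2}}\otimes\delta_{\mathfrak{g_2}})\circ\delta_{\mathfrak{g_2}}=f^{\otimes 3}\circ(\id_{\mathfrak{g_1}}\otimes\delta_{\mathfrak{g_1}})\circ\delta_{\mathfrak{g_1}}\circ f^{-1}$ and, using also the $\tau$-compatibility, $(\id_{\mathfrak{g_2}}\otimes\tau_{\mathfrak{g_2},\mathfrak{g_2}})\circ(\delta_{\mathfrak{g_2}}\otimes\id_{\mathfrak{g_2}})\circ\delta_{\mathfrak{g_2}}=f^{\otimes 3}\circ(\id_{\mathfrak{g_1}}\otimes\tau_{\mathfrak{g_1},\mathfrak{g_1}})\circ(\delta_{\mathfrak{g_1}}\otimes\id_{\mathfrak{g_1}})\circ\delta_{\mathfrak{g_1}}\circ f^{-1}$. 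Substituting these three expressions into \eqref{eq:bisuli1} written for $\delta_{\mathfrak{g_2}}$ and factoring out $f^{\otimes 3}\circ(-)\circ f^{-1}$, the required identity reduces to \eqref{eq:bisuli1} for $\delta_{\mathfrak{g_1}}$, which holds by hypothesis.

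Finally, for the cocycle condition \eqref{eq:bisuli2}, take homogeneous $x',y'\in\mathfrak{g_2}$ and set $x=f^{-1}(x')$, $y=f^{-1}(y')$. Since $f$ is a Lie homomorphism, $[x',y']_{\mathfrak{g_2}}=f([x,y]_{\mathfrak{g_1}})$, so $\delta_{\mathfrak{g_2}}([x',y']_{\mathfrak{g_2}})=(f\otimes f)\bigl(\delta_{\mathfrak{g_1}}([x,y]_{\mathfrak{g_1}})\bigr)$. Expanding $\delta_{\mathfrak{g_1}}([x,y]_{\mathfrak{g_1}})$ by \eqref{eq:bisuli2} for $\mathfrak{g_1}$ and then pushing $f\otimes f$ past the adjoint operators via $f\circ ad_x=ad_{x'}\circ f$ (and $f\circ ad_y=ad_{y'}\circ f$) converts the right-hand side into
\[ (ad_{x'}\otimes\id_{\mathfrak{g_2}}+\id_{\mathfrak{g_2}}\otimes ad_{x'})\,\delta_{\mathfrak{g_2}}(y')-(ad_{y'}\otimes\id_{\mathfrak{g_2}}+\id_{\mathfrak{g_2}}\otimes ad_{y'})\,\delta_{\mathfrak{g_2}}(x'), \]
where we used $(f\otimes f)\,\delta_{\mathfrak{g_1}}(y)=\delta_{\mathfrak{g_2}}(y')$ and the analogous identity for $x$. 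This is exactly \eqref{eq:bisuli2} for $\delta_{\mathfrak{g_2}}$, so $(\mathfrak{g_2},[\cdot,\cdot]_{\mathfrak{g_2}},\delta_{\mathfrak{g_2}})$ is a Lie superbialgebra. I do not expect a genuine obstacle: the statement is a routine transport of structure, and the only point requiring care is to invoke the evenness of $f$ explicitly each time a flip $\tau$ or an odd operator $ad_x$ is moved across a tensor power of $f$; once that is flagged, every sign matches automatically and all four verifications go through.
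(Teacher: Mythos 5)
The paper states Proposition \ref{pr:cobraindisomLiebi} without proof, treating it as a routine general fact, so there is nothing to compare against line by line; your transport-of-structure argument is correct and supplies exactly the verification the paper leaves implicit. The two facts you isolate — that evenness of $f$ makes $f\otimes f$ commute with $\tau_{\cdot,\cdot}$ and lets $f^{\otimes k}$ slide past the (possibly odd) operators $ad_x$ without extra Koszul signs, and that the homomorphism property gives $f\circ ad_x = ad_{f(x)}\circ f$ — are precisely what is needed, and with them each of the four checks (parity preservation, skew-symmetry, the co-Jacobi identity via conjugation by $f^{\otimes 3}$, and the cocycle condition \eqref{eq:bisuli2}) goes through as you describe.
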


The special Lie superalgebra $sl(2|1)$ over $\Bbbk$ is the algebra $M_{3,3}(\Bbbk)$ of $3 \times 3$ matrices over $\Bbbk$, $\mathbb{Z}_2$-graded as $sl(2|1)_{\bar{0}} \oplus sl(2|1)_{\bar{1}}$, where
\[ sl(2|1)_{\bar{0}} = \{X = diag(A, D) | Str(X) := tr(A) - tr(D) = 0, \; A \in M_{2,2}(\Bbbk), \; D \in M_{1,1}(\Bbbk) \}, \]
and
\[ sl(2|1)_{\bar{1}} = \{ \begin{psmallmatrix} 0 & B \\ C & 0 \end{psmallmatrix} | B \in M_{2,1}(\Bbbk), \; C \in M_{1,2}(\Bbbk) \}, \]
with the bilinear super bracket $[x, y] = xy - (-1)^{a b} y x$ for $x \in sl(2|1)_{\bar{a}}, \; y \in sl(2|1)_{\bar{b}}, \; \bar{a}, \bar{b} \in \mathbb{Z}_2$,  on $sl(2|1)$.

We choose for basis of Lie superalgebra $sl(2|1)$ over $\Bbbk$ the following elements:
$ h_1=e_{1,1}-e_{2,2}, \quad h_2=e_{2,2}+e_{3,3}, $
$ e_1=e_{1,2}, \quad f_1=e_{2,1},$
$ e_2=e_{2,3}, \quad f_2=e_{3,2}, $
$ e_3=[e_1,e_2]=e_{1,3}, \quad f_3=[f_1,f_2]=-e_{3,1}, $
where $e_{i,j} \in M_{3,3}(\Bbbk)$ denotes matrix with $1$ at $(i,j)$-position and zeros elsewhere.
The elements $h_1, h_2, e_1, f_1$ are even and $e_2, f_2, e_3, f_3$ are odd.
We have $[h_i,h_j] = 0$, $[h_i, e_j] = a_{ij} e_j$, $[h_i,f_j]=-a_{ij} f_j$, $[e_i,f_j]=  \delta_{ij} h_i$,
$[e_2,e_2]=[f_2,f_2]=0$, $[e_1,[e_1,e_2]]=[f_1,[f_1,f_2]]=0$
with $(a_{ij})$ the matrix $$
A=\left(
\begin{array}{cc}
2 & -1 \\
-1 & 0 \\
\end{array}
\right).
$$

The Cartan subalgebra of $sl(2|1)$ is the $\Bbbk$-span $\mathfrak{h} = \langle h_1,h_2 \rangle$. Denote by $\mathfrak{h}^{*}$ the dual space of $\mathfrak{h}$. $sl(2|1)$ decomposes as a direct sum of root spaces $\mathfrak{h} \oplus \bigoplus_{\alpha \in \mathfrak{h}^{*}} sl(2|1)_{\alpha}$, where
\[ sl(2|1)_{\alpha} = \{ X \; | \; [h,X] = \alpha(h) X, \; \forall h \in \mathfrak{h} \}. \]
An $\alpha \in \mathfrak{h}^{*} - \{0\}$ is called a root if the root space $sl(2|1)_{\alpha}$ is not zero. The root system for $sl(2|1)$ is defined to be $\Delta = \{ \alpha \in \mathfrak{h}^{*} \; | \; sl(2|1)_{\alpha} \ne 0, \alpha \ne 0 \}$. Define sets of even and odd roots, respectively, to be $\Delta_{\bar{0}} = \{ \alpha \in \Delta \; | \; sl(2|1)_{\alpha} \cap sl(2|1)_{\bar{0}} \ne 0 \}$, $\Delta_{\bar{1}} = \{ \alpha \in \Delta \; | \; sl(2|1)_{\alpha} \cap sl(2|1)_{\bar{1}} \ne 0 \}$. Thus we can define a parity function $\lvert \cdot \rvert_{\Delta}: \Delta \to \mathbb{Z}_2$ by $|x|_{\Delta} = \bar{a}$ if $x \in \Delta_{\bar{a}}$, where $\bar{a} \in \mathbb{Z}_2$.

Consider the $\Bbbk$-span $\mathfrak{d} = \langle e_{11}, e_{22}, e_{33} \rangle$ and it's dual space $\mathfrak{d}^{*} = \langle \epsilon_{1}, \epsilon_{2}, \delta_{1} \rangle$. We define a non-degenerate symmetric bilinear form $(\cdot, \cdot) :\mathfrak{d}^{*} \times \mathfrak{d}^{*} \to \Bbbk $ by
\[ (\epsilon_{i}, \epsilon_{j}) = \delta_{ij}, \; (\epsilon_{i}, \delta_{1}) = 0, \; (\delta_{1}, \delta_{1}) = -1 \]
for all $i, j \in I$, where $I := \{1,2\}$.

Notice that $\mathfrak{h}^{*} \subset \mathfrak{d}^{*}$. Then the root system $\Delta \subseteq \mathfrak{h}^{*}$ has the form $\Delta = \Delta_{\bar{0}} \oplus \Delta_{\bar{1}}$, where
$ \Delta_{\bar{0}}=\{\pm(\epsilon_1-\epsilon_2)\}, $ $ \Delta_{\bar{1}}=\{ \pm(\epsilon_1-\delta_1),  \pm(\epsilon_2-\delta_1) \}$. Accordingly, we also have the decomposition $\Delta = \Delta^{+} \cup \Delta^{-}$, where $\Delta^{+} = \{\epsilon_1-\epsilon_2, \epsilon_1-\delta_1, \epsilon_2-\delta_1 \}$ and $\Delta^{-} = \{\epsilon_2-\epsilon_1, \delta_1-\epsilon_1, \delta_1-\epsilon_2 \}$. We choose the basis $\tau = \{ \alpha_{1} := \epsilon_{1} - \epsilon_{2}, \; \alpha_{2} := \epsilon_2-\delta_1 \}$. The form $(\cdot, \cdot)$ on $\mathfrak{d}^{*}$ induces a non-degenerate symmetric bilinear form on $\mathfrak{h}^{*}$, which will be denoted by $(\cdot, \cdot)$ as well. We define a natural pairing $\langle \cdot , \cdot \rangle : \mathfrak{h} \times \mathfrak{h}^{*} \to \Bbbk $ by linearity with $\langle h_i, \alpha \rangle = \alpha (h_i)$ for all $i \in I, \; \alpha \in \tau$.

The Cartan matrix is $A = (a_{ij} = \alpha_{j} (h_i); \; \alpha_{j} \in \tau, i,j \in I)$. One could describe $A$ by the corresponding Dynkin diagram. Join vertex $i$ with vertex $j$ if $a_{ij} \ne 0$. We need two types of vertices: $\circ$ if $a_{ii} = 2$ and $|\alpha_{i}|_{\Delta} = 0$ (it is called white dot); $\otimes$ if $a_{ii} = 0$ and $|\alpha_{i}|_{\Delta} = 1$ (it is called grey dot), where $i \in I$. We say that two Dynkin diagrams or even graphs that correspond to them are isomorphic if they have the equal number of white and grey dots.

Define the linear function $\delta_{sl(2|1)}: sl(2|1) \to sl(2|1) \otimes sl(2|1)$ on the generators by
\begin{equation}
	\label{eq:LSBstr}
	\delta_{sl(2|1)}(h_{i}) = 0, \; \delta_{sl(2|1)}(e_{i}) = \frac{1}{2} (h_{i} \otimes e_{i} - e_{i} \otimes h_{i}), \; \delta_{sl(2|1)}(f_{i}) = \frac{1}{2} (h_{i} \otimes f_{i} - f_{i} \otimes h_{i})
\end{equation}
for $i \in I$, and extend it to all the elements of $sl(2|1)$ using equation \eqref{eq:bisuli2} and by linearity. Then $sl(2|1)$ becomes a Lie superbialgebra.

\section{Weyl groupoid}
\label{sec:WG}

We give a categorical definition of a Weyl groupoid. This enable us to describe the Weyl groupoid of $sl(2|1)$ by generators and relations. We mention how it is connected with the supercategory of Lie superalgebras.

\subsection{Cartan schemes and definition of Weyl groupoid}
\label{WGD}
We adopt to our purposes the definition of a Weyl groupoid which was introduced in \cite{HY08} and reformulated in \cite{CH09} (see also \cite{S11}, \cite{H09}, \cite{LS91}, \cite{K90}). Thus we define Weyl groupoid as a supercategory. In Section \ref{sc:Wgsl} we give the example how Lie superalgebra $sl(2|1)$ fits in our definition.

In order to define Weyl groupoid we need auxiliary data. In this way we associate with an object (Dynkin diagram) of the Weyl groupoid an element of a set $D$ which labels it, root basis $\tau$, maps $\rho$ which indicate the direction of the action and special matrices $C$ used to define reflections.

Let $V$ be a fixed $(N_{\bar{0}}, N_{\bar{1}})$-dimensional super vector space and $N = N_{\bar{0}} + N_{\bar{1}}$.

\begin{definition}
	\normalfont
	\label{def:CSch}
	Let $A$ and $D$ be non-empty sets, where $A = (a_{d})_{d \in D}$, $\tau^{d}$ be a basis of $V$ for all ${d \in D}$,  $\rho_{\alpha} : S_{d} \rightarrow S_{d^{'}}$ bijective  mappings for all $\alpha \in \tau^{d}$, $d \in D$ and for some non-empty $S_{d}, S_{d^{'}} \subseteq A$, $d^{'} \in D$, and $N \times N$ matrices $C^{d} = (c_{\alpha,\beta}^{d} \in \mathbb{Z})_{\alpha,\beta \in \tau^d}$ for all $d \in D$. The tuple
	\[ \mathcal{C} = \mathcal{C}(A,D,V,(\tau^d)_{d \in D}, (\rho^d_{\alpha})_{\alpha \in \tau^d, d \in D}, (C^{d})_{d \in D}) \]
	is called a Cartan scheme if for all $d \in D$
	\begin{enumerate}
		\item if $\rho_{\alpha}: S_{d} \to S_{d^{'}}$ then $\exists \beta \in S_{d^{'}}$:  $\rho_{\beta} \rho_{\alpha} = id_{S_{d}}$ and $\rho_{\alpha} \rho_{\beta} = id_{S_{d^{'}}}$ for all $\alpha \in \tau_d$, $d \in D$ and for some $d^{'} \in D$,
		\item $c_{\alpha,\alpha}^d = 2$ and $c_{\alpha,\beta}^d \le 0$, where $\alpha,\beta \in \tau^d$ with $\alpha \ne \beta$,
		\item if $c_{\alpha,\beta}^d = 0$, then $c_{\beta,\alpha}^d = 0$, where $\alpha, \beta \in \tau^d$,
		\item \label{def:CSch4} if $\rho_{\alpha}: S_{d} \to S_{d^{'}}$ then there exists bijective mapping $i: \tau^d \to \tau^{d'}$ such that  $c^d_{\alpha, \beta} = c^{d'}_{i(\alpha), i(\beta)}$ for all $\alpha, \beta \in \tau_d$, $d \in D$ and for some $d^{'} \in D$.
	\end{enumerate}
\end{definition}

Now we are able to formulate the definition of a Weyl groupoid where morphisms are generalizations of compositions of reflections.

\begin{definition}
	\label{def:WG}
	\normalfont
	Let $\mathcal{C} = \mathcal{C}(A,D,V,(\tau^d)_{d \in D}, (\rho_{\alpha})_{\alpha \in \tau^d, d \in D}, (C^{d})_{d \in D})$ be a Cartan scheme. For all $d \in D$ and $\alpha, \beta \in \tau^d$ define the function $\sigma_{\alpha}^{d} \in  GL_{N}(V)$ by
	\begin{equation}
	\label{eq:genmorroot}
	\sigma_{\alpha}^{d}(\beta) = \beta - c_{\alpha,\beta}^{d} \alpha.
	\end{equation}
	The Weyl groupoid of $\mathcal{C}$ is the supercategory $\mathcal{W}(\mathcal{C})$ such that $\text{Obj}(\mathcal{W}(\mathcal{C})) = A$ and the morphisms are compositions of maps $\sigma_{\alpha}^{d}$ with $d \in D$ and $\alpha \in \tau^d$, where $\sigma_{\alpha}^{d}$ is considered as an element of super vector space $\text{Hom}_{\mathcal{W}(\mathcal{C})}(a_d,\rho_{\alpha}(a_d))$. The cardinality of $D$ is the rank of $\mathcal{W}(\mathcal{C})$.
\end{definition}

It is easy to see that the above definition is correct and the Weyl groupoid is in fact a supercategory, where the parity of morphism $\sigma_{\alpha}^{d}$ is determined by parity of $\alpha$ as an element of a basis $\tau^{d}$, where $d \in D$. We reformulate the condition \ref{def:CSch4} in definition \ref{def:CSch} in the following way $c^d_{\alpha, \beta} = c^{d^{'}}_{\sigma_{\alpha}^{d}(\alpha), \sigma_{\alpha}^{d}(\beta)}$, where $\rho_{\alpha}(a_d) = a_{d^{'}}$ ($d^{'} \in D$) and for all $\alpha, \beta \in \tau_d$, $d \in D$. Thus a morhpism $\sigma_{\alpha}^{d}$ \eqref{eq:genmorroot} becomes invertible and the inverse is $\sigma_{\sigma_{\alpha}^{d}(\alpha)}^{d^{'}}$.

\begin{definition}
	\normalfont
	A Cartan scheme is called connected if its Weyl groupoid is connected, that is, if for all $a,b \in A$ there exists $w \in \text{Hom}_{\mathcal{W}(\mathcal{C})}(a,b)$. The Cartan scheme is called simply connected, if it is connected and \[\text{Hom}_{\mathcal{W}(\mathcal{C})}(a,a) = \{id_{a}\}\]
	for all $a \in A$.
\end{definition}

We characterize root systems in axiomatic way and also add explicit conditions that are imposed on reflections.

\begin{definition}
	\normalfont
	Let $\mathcal{C} = \mathcal{C}(A,D,V,(\tau^d)_{d \in D}, (\rho^d_{\alpha})_{\alpha \in \tau^d, d \in D}, (C^{d})_{d \in D})$ be a Cartan scheme. For all $a_d \in A$ let $R^{a_d} \subseteq V$, and define $m_{\alpha,\beta}^{a_d} = |R^{a_d} \cap (\mathbb{N}_{0} \alpha + \mathbb{N}_{0} \beta)|$ for all $\alpha, \beta \in \tau^{d}$ and $d \in D$. We say that
	\[ \mathcal{R} = \mathcal{R}(\mathcal{C}, (R^{a})_{a \in A}) \]
	is a root system of type $\mathcal{C}$, if it satisfies the following axioms:
	\begin{enumerate}
		\item exists decomposition $R^{a} = R_{+}^{a} \cup -R_{+}^{a}$, for all $a \in A$;
		\item $R^{a_d} \cap \mathbb{Z} \alpha = \{\alpha, -\alpha\}$ for all $\alpha \in \tau^d$ and $d \in D$;
		\item $\sigma_{\alpha}^{d}(R^{a_d}) = R^{\rho_{\alpha}^{d}(a_d)}$ for all $\alpha \in \tau^d$ and $d \in D$;
		\item for $a, b \in A$, if $id_{V} \in \text{Hom}_{\mathcal{W}(\mathcal{C})}(a,b)$, then $a = b$.
	\end{enumerate}
\end{definition}

The elements of the set $R^{a}$, where $a \in A$, are called roots. The root system $\mathcal{R}$ is called finite if for all $a \in A$ the set $R^{a}$ is finite. If $\mathcal{R}$ is a root system of type $\mathcal{C}$, then we say that $\mathcal{W}(\mathcal{R}) := \mathcal{W}(\mathcal{C})$ is the Weyl groupoid of $\mathcal{R}$.

\subsection{Weyl groupoid of $sl(m|n)$}
\label{sc:Wgsl}

Now we are able to construct the Weyl groupoid of the Lie superalgebra $sl(2|1)$. Note that it follows from \cite{AYY} that Definition \ref{def:WG} of a Weyl groupoid is true for the simple Lie superalgebra $A(m,n)$ for $m,n \in \mathbb{Z}_{\ge 0}$, where $A(m,n)$ is defined by $sl(m+1,n+1)$ if $m \ne n$, and otherwise $A(n,n) := sl(n+1,n+1) / \mathfrak{i}$, where $\mathfrak{i}$ is its unique one-dimensional ideal.  We use notations $\epsilon_i$ ($i \in \overline{1,m}$), $\delta_{j-m}$ ($j \in \overline{m+1,m+n}$) from Sections \ref{subs:dsl} and \ref{WGD}.

Let $I = \overline{1,m+n-1}$. The elements of a finite set $D \subset \mathbb{N}$ will be used to label different Dynkin diagrams for $A(m-1,n-1)$. We make a convention to parameterize elements of the dual space $\mathfrak{d}^{*}$ by the set $I(m|n) = \{1,2,...,m+n\}$, where
\[ 	\bar{\epsilon}_{i} :=
\begin{cases}
	\epsilon_{i},& \mbox{ if } \; i \in \overline{1,m}, \\
	\delta_{i-m},& \mbox{ if } \; i \in \overline{m+1,m+n}.
\end{cases} \]
Let
\begin{equation}
	\label{eq:rootdecomp}
	(\tau^{d} = \{\alpha_{1,d}= \bar{\epsilon}_{i_1} - \bar{\epsilon}_{i_2}, \; \alpha_{2,d} = \bar{\epsilon}_{i_2} - \bar{\epsilon}_{i_3} \; , ... , \; \alpha_{m+n-1,d} = \bar{\epsilon}_{i_{m+n-1}} - \bar{\epsilon}_{i_{m+n}}  |  \{i_{1}, ... , i_{m+n}\} = I(m|n) \})_{d \in D}.	
\end{equation}
Notice that $\tau^{d}$ is the basis of $\mathfrak{h}^{*}$ for all ${d \in D}$. 

Set
\[ \tau^{1} = \{ \alpha_{1,1} = \epsilon_{1} - \epsilon_{2}, \alpha_{2,1} = \epsilon_{2} - \epsilon_{3}, ... , \alpha_{m,1} = \epsilon_{m} - \delta_{1}, \alpha_{m+1,1} = \delta_{1} - \delta_{2} , ... , \alpha_{m+n-1,1} = \delta_{n-1} - \delta_{n} \}. \]

We recall  that $\tau^{1}$ is called a distinguished simple root system for a Lie superalgebra of type $sl(m,n)$ ($A(m-1,n-1)$) (\cite{FSS00}).

Consider the family of symmetric matrices $A_{d} = ((\alpha_{i}^{d}, \alpha_{j}^{d}))_{i,j \in I}$, where the non-degenerate symmetric bilinear form $(\cdot, \cdot) :\mathfrak{h}^{*} \times \mathfrak{h}^{*} \to \Bbbk $ is defined as in Section \ref{subs:dsl}. Define a family of tuples $A = (a_{d} = (A_{d}, \tau^{d}) )_{d \in D}$.

Let $c_{\alpha,\beta} := -\text{max}\{k \in \mathbb{Z}_{\ge 0} \; | \; \beta + k \alpha \in \Delta\}$ for $\alpha \ne \beta$, and $c_{\alpha,\alpha} := 2$, where $\alpha, \beta \in \Delta$. Introduce a family of matrices $(C^{d} = (c_{\alpha,\beta}^{d} := c_{\alpha,\beta})_{\alpha, \beta \in \tau^d} )_{d \in D}$.

Denote the (usual) left action of the symmetric group $S_{m+n}$ on $I(m|n)$ by $\triangleright: S_{m+n} \times I(m|n) \to I(m|n)$ and on $\Delta$ by $\circlearrowleft: S_{m+n} \times \Delta \to \Delta$, where $s \circlearrowleft (\bar{\epsilon}_{j_1} - \bar{\epsilon}_{j_2}) = \bar{\epsilon}_{s \triangleright j_1} - \bar{\epsilon}_{s \triangleright j_2}$, for $s \in S_{m+n}, \; j_1, j_2 \in I(m|n)$. Thus define for all $\alpha \in \tau^d$ ($d \in D$) functions $\rho_{\alpha}^{d}: \{ a_{d} \} \to \{ a_{b} \}$, such that $\rho_{\alpha}^{d}(a_d) = a_{b}$, where $b \in D$, $\alpha = \bar{\epsilon}_{j_1} - \bar{\epsilon}_{j_2}$ and $\tau^{b} = \{ \alpha_{k}^{'} := (j_1,j_2) \circlearrowleft \alpha_{k} = \sigma_{\alpha}^{d}(\alpha_{k}) \; | \; \alpha_{k} \in \tau^{d}, k \in I \}$.

Consider the simply connected Cartan scheme
\[\mathcal{C} = \mathcal{C}(A,D,\mathfrak{h}^{*},(\tau^d)_{d \in D}, (\rho^d_{\alpha})_{\alpha \in \tau^d, d \in D}, (C^{d})_{d \in D}). \]
Notice that $\mathcal{R} = \mathcal{R}(\mathcal{C}, (\Delta=\Delta^{a})_{a \in A})$ is the root system of type $\mathcal{C}$. We call $\mathcal{W}(\mathcal{R})$ the Weyl groupoid of $sl(m|n)$. It is easy to construct Weyl groupoid for $sl(2|1)$ (see Fig. \ref{pict:DD}).

\begin{center}
	\begin{tikzpicture}
	\node[scale=1.4]{
		\begin{tikzcd}[sep = normal, column sep=normal]
		\underset{\epsilon_1-\epsilon_2}{\bigcirc} \arrow[dash, shorten <= -.95em, shorten >= -.9em, "d=1"{name=U1}]{r} & \underset{\epsilon_2-\delta_1}{\bigotimes} \ar[r, shorten <= -.5em, shorten >= -.5em, shift right=-0.5ex,"\sigma_{\epsilon_2-\delta_1}^{1}"] & \ar[l, shorten <= -.5em, shorten >= -.5em, shift right=-0.5ex,"\sigma_{\delta_1-\epsilon_2}^{3}"] \underset{\epsilon_1-\delta_1}{\bigotimes} \arrow[dash, shorten <= -.95em, shorten >= -.9em, "d=3"{name=U2}]{r} & \underset{\delta_1-\epsilon_2}{\bigotimes} \ar[r, shorten <= -.5em, shorten >= -.5em, shift right=-0.5ex,"\sigma_{\epsilon_1-\delta_1}^{3}"] & \ar[l, shorten <= -.5em, shorten >= -.5em, shift right=-0.5ex,"\sigma_{\delta_1-\epsilon_1}^{5}"] \underset{\delta_1-\epsilon_1}{\bigotimes} \arrow[dash, shorten <= -.95em, shorten >= -.9em, "d=5"{name=U3}]{r} & \underset{\epsilon_1-\epsilon_2}{\bigcirc} \\
		\underset{\epsilon_2-\epsilon_1}{\bigcirc} \arrow[dash, shorten <= -.95em, shorten >= -.9em, "d=2"{name=D1, below}]{r} & \underset{\epsilon_1-\delta_1}{\bigotimes} \ar[r, shorten <= -.5em, shorten >= -.5em, shift right=-0.5ex,"\sigma_{\epsilon_1-\delta_1}^{2}"] & \ar[l, shorten <= -.5em, shorten >= -.5em, shift right=-0.5ex,"\sigma_{\delta_1-\epsilon_1}^{4}"] \underset{\epsilon_2-\delta_1}{\bigotimes} \arrow[dash, shorten <= -.95em, shorten >= -.9em, "d=4"{name=D2, below}]{r} & \underset{\delta_1-\epsilon_1}{\bigotimes} \ar[r, shorten <= -.5em, shorten >= -.5em, shift right=-0.5ex,"\sigma_{\epsilon_2-\delta_1}^{4}"] & \underset{\delta_1-\epsilon_2}{\bigotimes} \ar[l, shorten <= -.5em, shorten >= -.5em, shift right=-0.5ex,"\sigma_{\delta_1-\epsilon_2}^{6}"] \arrow[dash, shorten <= -.95em, shorten >= -.9em, "d=6"{name=D3, below}]{r} & \underset{\epsilon_2-\epsilon_1}{\bigcirc}
		
		\arrow[d, shorten <= .6em, shorten >= .8em, from=U1, to=D1, shift right=0.5ex,swap,"\sigma_{\epsilon_1-\epsilon_2}^{1}"]
		\arrow[u, shorten <= .8em, shorten >= .6em, from=D1, to=U1, shift right=0.5ex,swap,"\sigma_{\epsilon_2-\epsilon_1}^{2}"]
		\arrow[d, shorten <= .6em, shorten >= .8em, from=U3, to=D3, shift right=0.5ex,swap,"\sigma_{\epsilon_1-\epsilon_2}^{5}"]
		\arrow[u, shorten <= .8em, shorten >= .6em, from=D3, to=U3, shift right=0.5ex,swap,"\sigma_{\epsilon_2-\epsilon_1}^{6}"]
		
		\end{tikzcd}
	};
	\end{tikzpicture}
	
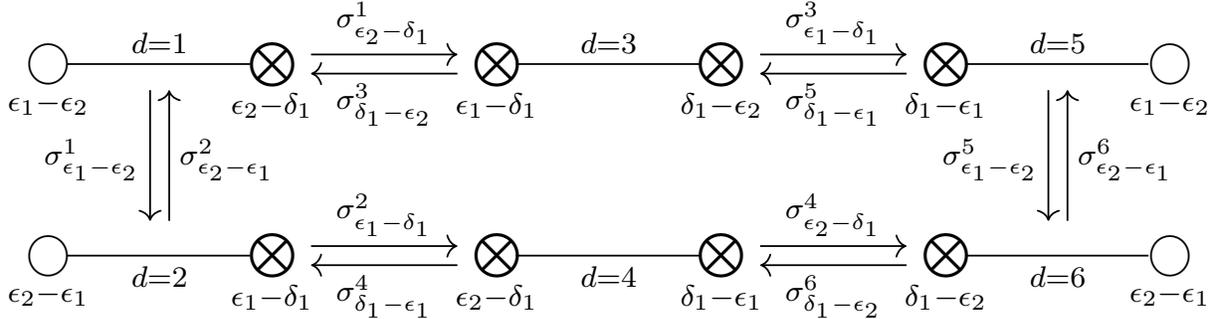
\captionof{figure}{\textbf{Dynkin Diagrams of $sl(2|1)$}}
	\label{pict:DD}
\end{center}

$\mathcal{W}(\mathcal{R})$ is the supercategory generated by morphisms (recall \eqref{eq:genmorroot})
\begin{equation}
	\label{eq:WCgenel}
	\mathcal{B} = \{\sigma_{\alpha_{i,d}}^{d} \in \text{Hom}(\mathcal{W}(\mathcal{R})) | \; i \in I , d \in D \}
\end{equation}
and by relations for $j \in J$
\begin{equation}
	\label{eq:invWCeqgenl}
	\sigma_{x}^{d_2} \sigma_{\alpha_{i,d_1}}^{d_1} = id_{a_{d_1}}, \; \sigma_{\alpha_{i,d_1}}^{d_1} \sigma_{x}^{d_2} = id_{a_{d_2}},
\end{equation}
where $x = \sigma_{\alpha_{i,d_1}}^{d_1}(\alpha_{i,d_1})$;
\begin{equation}
 \sigma_{\alpha_{j,d_2}}^{d_2} \sigma_{\alpha_{i,d_1}}^{d_1} =  \sigma_{\alpha_{i,d_2}}^{d_2} \sigma_{\alpha_{j,d_1}}^{d_1},
\end{equation}
iff $|i-j| \ge 2$ and $|\alpha_{i,d_1}| = |\alpha_{j,d_1}| = 0$;
\begin{equation}
	\sigma_{\alpha_{j,d_4}}^{d_4} \sigma_{\alpha_{i,d_3}}^{d_3} \sigma_{\alpha_{j,d_2}}^{d_2} \sigma_{\alpha_{i,d_1}}^{d_1} = \sigma_{\alpha_{i,d_4}}^{d_4} \sigma_{\alpha_{j,d_3}}^{d_3} \sigma_{\alpha_{i,d_2}}^{d_2} \sigma_{\alpha_{j,d_1}}^{d_1},
\end{equation}
iff $|i-j| \ge 2$;
\begin{equation}
	\sigma_{\alpha_{i,d_3}}^{d_3} \sigma_{\alpha_{j,d_2}}^{d_2} \sigma_{\alpha_{i,d_1}}^{d_1} =  \sigma_{\alpha_{j,d_3}}^{d_3} \sigma_{\alpha_{i,d_2}}^{d_2} \sigma_{\alpha_{j,d_1}}^{d_1},
\end{equation}
iff $|i-j| = 1$ and $|\alpha_{i,d_1}| = |\alpha_{j,d_1}|$;
\begin{equation}
	\label{eq:WCeqgenl}
	\sigma_{\alpha_{j,d_6}}^{d_6} \sigma_{\alpha_{i,d_5}}^{d_5} \sigma_{\alpha_{j,d_4}}^{d_4} \sigma_{\alpha_{i,d_3}}^{d_3} \sigma_{\alpha_{j,d_2}}^{d_2} \sigma_{\alpha_{i,d_1}}^{d_1} = \sigma_{\alpha_{i,d_6}}^{d_6} \sigma_{\alpha_{j,d_5}}^{d_5} \sigma_{\alpha_{i,d_4}}^{d_4} \sigma_{\alpha_{j,d_3}}^{d_3} \sigma_{\alpha_{i,d_2}}^{d_2} \sigma_{\alpha_{j,d_1}}^{d_1},
\end{equation}
iff $|i-j| = 1$.

Thus we have theorem (definition)

\begin{theorem}
	\label{th:bga}
	Weyl groupoid $\mathcal{W}(\mathcal{R})$ has the structure of the braid groupoid of type $A$, i. e. it is groupoid with generators $\mathcal{B}$ \eqref{eq:WCgenel}  and relations $\sigma_{\alpha} \sigma_{\beta} ... = \sigma_{\beta} \sigma_{\alpha} ...$ for all $\alpha, \beta \in \mathcal{B}$, $\alpha \ne \beta$ with $m(\alpha,\beta)$ factors on both sides, where $m(\alpha, \beta)$ is the order of $\sigma_{\alpha} \sigma_{\beta}$ in $\mathcal{W}(\mathcal{R})$.
\end{theorem}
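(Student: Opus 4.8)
The plan is to verify that the two defining datasets of the braid groupoid — the generating set and the braid-type relations — are precisely what the explicit presentation of $\mathcal{W}(\mathcal{R})$ given in equations \eqref{eq:WCgenel}–\eqref{eq:WCeqgenl} provides, once we reinterpret those relations through the pairwise-order function $m(\alpha,\beta)$. First I would observe that by construction $\mathcal{W}(\mathcal{R})$ is the supercategory freely generated by the set $\mathcal{B}$ of \eqref{eq:WCgenel} modulo the listed relations, and that every $\sigma_{\alpha_{i,d}}^{d}$ is invertible with inverse $\sigma_{\sigma_{\alpha_{i,d}}^{d}(\alpha_{i,d})}^{d^{'}}$ (this is exactly the remark following Definition \ref{def:WG}, which follows from axiom \ref{def:CSch4} of the Cartan scheme). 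Hence $\mathcal{W}(\mathcal{R})$ is a groupoid. The relations \eqref{eq:invWCeqgenl} are just the inverse relations and carry no extra content beyond invertibility, so the residual content is carried by the four families of relations indexed by the cases $|i-j|\ge 2$ (even/even), $|i-j|\ge 2$ (general), $|i-j|=1$ (equal parity), and $|i-j|=1$ (general).

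Next I would compute the order $m(\sigma_{\alpha_{i}},\sigma_{\alpha_{j}})$ of the product of two generators associated with adjacent-or-not simple roots, working object by object around the groupoid, using the local Cartan matrices $A_d$ and the reflection formula \eqref{eq:genmorroot}. The key case distinctions are: when $|i-j|\ge 2$ the corresponding simple roots are orthogonal in every chamber, so the composite $\sigma_{\alpha_i}\sigma_{\alpha_j}$ returns to the identity after $2$ steps if both roots stay even, and after $4$ steps in general (two grey dots can require a longer cycle); when $|i-j|=1$ the composite has order $3$ if the two roots have equal parity in the starting chamber and order $6$ otherwise. Each of these claims is checked by following the sequence of Dynkin diagrams obtained under successive reflections — precisely the data encoded in Figure \ref{pict:DD} for $sl(2|1)$ — and verifying that the composite is $\mathrm{id}_{a_d}$ after exactly $m$ steps and not before. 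Substituting these values of $m(\alpha,\beta)$ into the generic braid relation $\sigma_{\alpha}\sigma_{\beta}\cdots = \sigma_{\beta}\sigma_{\alpha}\cdots$ ($m$ factors each side) reproduces, term by term, the relations (3.8)–\eqref{eq:WCeqgenl}; conversely every relation in that list is of this form for the appropriate $m$.

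Finally I would package this as the statement that the functor of groupoids from the abstractly-defined $\mathcal{W}(\mathcal{R})$ to the braid groupoid of type $A$ (same objects $A$, same generators $\mathcal{B}$, relations the braid relations with the computed $m(\alpha,\beta)$) sending each generator to itself is well defined — because each abstract relation maps to a braid relation — and is an isomorphism, because the braid relations are exactly the abstract relations rewritten, so the inverse functor is equally well defined. The main obstacle I anticipate is the bookkeeping in the order computation: one must track how the parity of a simple root $\alpha_{i,d}$ changes as $d$ varies along a reflection sequence (a grey dot can turn white and vice versa), confirm that the cycle genuinely closes at step $m$, and rule out premature closure; the braid relations of type $A$ together with the degenerations forced by orthogonality and by repeated odd reflections make this finite but somewhat delicate to state uniformly for all $m,n$, which is why the detailed verification is carried out in the $sl(2|1)$ case and the general pattern is read off from the structure of $\tau^{d}$ in \eqref{eq:rootdecomp}.
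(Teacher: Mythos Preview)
Your proposal is correct and follows the same approach as the paper: the paper's proof is a single sentence stating that the result ``follows immediately from equations \eqref{eq:invWCeqgenl}--\eqref{eq:WCeqgenl} and direct computations,'' and your write-up is precisely a fleshed-out version of that direct computation, identifying each of the listed relations with the braid relation for the appropriate value of $m(\alpha,\beta)$. One small wording correction: in the $|i-j|\ge 2$ case the longer cycle (length $4$) arises whenever at least one of the two roots is odd, not specifically when both are grey, since relation (3.7) already covers the both-even situation with $m=2$.
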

\begin{proof}
	The results follows immediately from equations \eqref{eq:invWCeqgenl} - \eqref{eq:WCeqgenl} and direct computations.
\end{proof}

Let us note that the above described braid groupoid is a natural generalization of the braid group.

\vspace{0.3cm}

\subsection{Connection with the supercategory of Lie superalgebras}
\label{sc:CcLs}

We use notations introduced in Section \ref{sc:Wgsl}. Fix $A_{d}=((\alpha_{i}^{d}, \alpha_{j}^{d}))_{i,j \in I}$ and the set of simple roots $\tau^{d}$ for $d \in D$. Define a Lie superalgebra $\mathfrak{g}(A_d, \tau^{d})$ to be a Lie superalgebra generated as in  Proposition \ref{pr:supliealgdef}. Recall the definition of the supercategory $\underline{\text{sLieAlg}}$ (see Definition \ref{def:scsmcSL}).

We are able to construct the covariant faithful functor $F: \mathcal{W}(\mathcal{R}) \to \underline{\text{sLieAlg}}$. Thus the action on objects is given by the formula
\begin{equation}
	\label{eq:funcsuplieobj}
	F((A_d, \tau^{d}))= \mathfrak{g}(A_d, \tau^{d}),
\end{equation}
where $A_{d}=((\alpha_{i}^{d}, \alpha_{j}^{d}))_{i,j \in I}$ and $d \in D$. Notice that $sl(m, n) = \mathfrak{g}(A_1, \tau^{1})$, where $\tau^{1}$ be a distinguished system of simple roots (\cite{FSS00}).

Let $\rho(a_{d_1}) = a_{d_2}$ for $d_1, d_2 \in D$. We prove in Theorem \ref{th:suplieclasisom} that $\mathfrak{g}(A_{d_1},\tau^{d_1})$ and $\mathfrak{g}(A_{d_2},\tau^{d_2})$ are isomorphic in $\underline{\text{sLieAlg}}$.  Consider a generator $\sigma_{\alpha_{i,d_1}}^{d_1} \in \text{Hom}_{\mathcal{W}(\mathcal{R})}(a_{d_1}, a_{d_2})$ \eqref{eq:WCgenel} for an appropriate $\alpha_{i,d_1} \in \tau^{d_1}$ ($i \in I$) and fix an isomorphism
\[ V_{i, d_1} \in \text{Hom}_{\underline{\text{sLieAlg}}}(\mathfrak{g}(A_{d_1},\tau^{d_1}),  \mathfrak{g}(A_{d_2},\tau^{d_2})). \]
Define $F(\sigma_{\alpha_{i,d_1}}^{d_1}) = V_{i, d_1}$ and $F(\sigma_{x}^{d_2}) = V_{i, d_2}$, where $x = \sigma_{\alpha_{i,d_1}}^{d_1}(\alpha_{i,d_1})$ and $V_{i, d_2} := V_{i, d_1}^{-1}$. Additionally suppose that isomorphisms $V_{i,d}$ satisfy relations \eqref{eq:invWCeqgenl} - \eqref{eq:WCeqgenl} for all $i \in I$ and $d \in D$. It is easy to see that $F$ is indeed the covariant faithful functor that preserves the structure of the Weyl groupoid. We give an example of the family of isomorphisms $\{ F(\sigma) \in \text{Hom}(\underline{\text{sLieAlg}}) \}_{\sigma \in \mathcal{B}}$. We formulate

\begin{theorem}
	\label{th:suplieclasisom}
	There exist the unique covariant faithful functor $F: \mathcal{W}(\mathcal{R}) \to \underline{\text{sLieAlg}}$ which satisfies equation \eqref{eq:funcsuplieobj} and for all $\sigma_{\alpha_{i,d_1}}^{d_1} \in \mathcal{B}$ ($i \in I$)
	\begin{equation}
		\label{eq:functsupliealgedefmor}
		F(\sigma_{\alpha_{i,d_1}}^{d_1}) = L_{i, d_1}, \; F(\sigma_{x}^{d_2}) = L_{i, d_2}^{-}
	\end{equation}
	and
	\[ L_{i, d_1}, L_{i, d_1}^{-}: \mathfrak{g}(A_{d_1},\tau^{d_1}) \to \mathfrak{g}(A_{d_2},\tau^{d_2}) \]
	are unique isomorphisms in $\underline{\text{sLieAlg}}$ satisfying equations \eqref{eq:cslif} - \eqref{eq:cslis} below for $|i-j| = 1$ and $|i-l| \ge 2$ ($j,l \in I$)
	\begin{equation}
	\label{eq:cslif}
	L_{i, d_1}(h_{i,d_1}) = L_{i, d_1}^{-}(h_{i,d_1}) = - h_{i,d_2},
	\end{equation}
	\begin{equation}
	L_{i, d_1}(h_{j,d_1}) = L_{i, d_1}^{-}(h_{j,d_1}) = h_{i,d_2} +  h_{j,d_2}, \; L_{i, d_1}(h_{l,d_1}) = L_{i, d_1}^{-}(h_{l,d_1}) = h_{l,d_2},
	\end{equation}
	\begin{equation}
	L_{i, d_1}(e_{i,d_1}) = (-1)^{|\alpha_{i,d_1}|} f_{i, d_2},
	\end{equation}
	\begin{equation}
	L_{i, d_1}(f_{i,d_1}) = e_{i, d_2},	
	\end{equation}
	\begin{equation}
	L_{i, d_1}^{-}(e_{i,d_1}) = f_{i,d_2},
	\end{equation}
	\begin{equation}
	L_{i, d_1}^{-}(f_{i,d_1}) = (-1)^{|\alpha_{i,d_1}|} e_{i,d_2},
	\end{equation}
	\begin{equation}
	L_{i, d_1}(e_{j,d_1}) = - (x,y) [ e_{i,d_2}, e_{j,d_2} ],		
	\end{equation}
	\begin{equation}
	L_{i, d_1}(f_{j,d_1}) = [ f_{j,d_2}, f_{i,d_2} ],
	\end{equation}
	\begin{equation}
	L_{i, d_1}^{-}(e_{j,d_1}) = (\alpha_{i,d_1},\alpha_{j,d_1}) (-1)^{|x|(|x|+|y|)+[b=-1]+[b(x,y)=1]} [e_{j,d_2}, e_{i,d_2}],
	\end{equation}
	\begin{equation}
	L_{i, d_1}^{-}(f_{j,d_1}) = (-1)^{1+|x||y|+[b=1]+[b(x,y)=-1]} [f_{i,d_2}, f_{j,d_2}],
	\end{equation}
	\begin{equation}
	L_{i, d_1}(e_{l,d_1}) = L_{i, d_1}^{-}(e_{l,d_1}) = e_{l,d_2},
	\end{equation}
	\begin{equation}
	\label{eq:cslis}
	L_{i, d_1}(f_{l,d_1}) = L_{i, d_1}^{-}(f_{l,d_1}) = f_{l,d_2},
	\end{equation}
	where $x =\sigma_{\alpha_{i,d_1}}^{d_1}(\alpha_{i,d_1})$, $y = \sigma_{\alpha_{i,d_1}}^{d_1}(\alpha_{j,d_1})$ and $b = [|\alpha_{i,d_1}| = 0] - [|\alpha_{i,d_1}| = 1]$.
	
	One has $L_{i, d_2}^{-} = (L_{i, d_1})^{-1}$.
\end{theorem}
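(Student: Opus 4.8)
The plan is to construct the functor $F$ explicitly by the formulas in the statement and then verify the three things the theorem asserts: (i) each $L_{i,d_1}$ (and each $L_{i,d_1}^-$) is a well-defined isomorphism of Lie superalgebras, (ii) the family $\{L_{i,d}, L_{i,d}^-\}$ satisfies the defining relations \eqref{eq:invWCeqgenl}--\eqref{eq:WCeqgenl} of the Weyl groupoid, so that $F$ is a well-defined functor, and (iii) $F$ is faithful and unique subject to \eqref{eq:funcsuplieobj} and \eqref{eq:functsupliealgedefmor}. For (i), I would proceed as follows: since by Proposition \ref{pr:supliealgdef} the target $\mathfrak{g}(A_{d_2},\tau^{d_2})$ is presented by generators $h_k, e_k, f_k$ and the super-Serre relations, it suffices to check that the images prescribed in \eqref{eq:cslif}--\eqref{eq:cslis} satisfy those relations; then $L_{i,d_1}$ extends uniquely to an algebra homomorphism. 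The key input here is that $\sigma_{\alpha_{i,d_1}}^{d_1}$ is an honest reflection on roots, so $x = -\alpha_{i,d_2}$ and $y = \alpha_{j,d_2} - c^{d_1}_{\alpha_i,\alpha_j}\alpha_{i,d_2}$, and one reads off from the Cartan-scheme axioms (Definition \ref{def:CSch}, in particular the compatibility $c^{d_1}_{\alpha,\beta} = c^{d_2}_{\sigma_\alpha(\alpha),\sigma_\alpha(\beta)}$) that the bracket $[e_{j,d_2},e_{i,d_2}]$ lands in the correct root space $\mathfrak{g}_{y}$, and similarly for the $f$'s. The normalization scalars $(x,y)$, $(\alpha_{i,d_1},\alpha_{j,d_1})$ and the sign $(-1)^{|x|(|x|+|y|)+[b=-1]+[b(x,y)=1]}$ are precisely what is needed to make $[h_{k,d_2},\cdot]$, $[e_{k,d_2},f_{l,d_2}]$, and the parity bookkeeping come out right; I would verify the nontrivial relations $[e_{i,d_1},f_{j,d_1}] = \delta_{ij}h_{i,d_1}$ and the (super-)Serre relations case by case on $|i-j| \in \{0,1,\ge 2\}$ and on the parities of $\alpha_i,\alpha_j$. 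That each $L_{i,d_1}$ is invertible with inverse $L_{i,d_2}^-$ follows once we check $L_{i,d_2}^- \circ L_{i,d_1} = \id$ and $L_{i,d_1}\circ L_{i,d_2}^- = \id$ on the generators, which is a finite computation using $b$ changing sign and $x,y$ swapping roles under $\sigma_{\sigma_{\alpha_i}(\alpha_i)}^{d_2}$.

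For (ii), the functoriality, the point is that the domain supercategory $\mathcal{W}(\mathcal{R})$ is presented by the generators $\mathcal{B}$ of \eqref{eq:WCgenel} and the relations \eqref{eq:invWCeqgenl}--\eqref{eq:WCeqgenl}, which by Theorem \ref{th:bga} are exactly the braid-groupoid relations; so a functor out of $\mathcal{W}(\mathcal{R})$ is the same as a choice of isomorphism for each generator subject to those braid relations. The inversion relations \eqref{eq:invWCeqgenl} are handled by part (i). The commuting relation (for $|i-j|\ge 2$, both even) and the length-$3$, $4$, $6$ braid relations must then be checked by composing the explicit formulas \eqref{eq:cslif}--\eqref{eq:cslis}; because both sides of each braid relation are algebra isomorphisms, it is enough to check equality on the finite set of generators $h_k, e_k, f_k$, which reduces each braid relation to a finite (if tedious) sign-and-scalar verification propagated along the path of Dynkin diagrams $d_1 \to d_2 \to \cdots$ read off from Figure \ref{pict:DD}. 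I expect the hexagon relation \eqref{eq:WCeqgenl} with the mixed parities to be the most delicate, because that is where the Iverson-bracket signs $[b=\pm1]$, $[b(x,y)=\pm1]$ interact nontrivially through several reflections that toggle the parity of the moving simple root.

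For (iii), faithfulness is automatic once $F$ is a functor: distinct morphisms of $\mathcal{W}(\mathcal{R})$ (which are reduced words in $\mathcal{B}$ modulo braid relations) map to distinct compositions of the $L$'s, since each $L_{i,d}$ changes the pair $(A_d,\tau^d)$ in a way recorded faithfully by the root system, and the relations imposed on the $L$'s are \emph{exactly} the relations of $\mathcal{W}(\mathcal{R})$ — no extra coincidences occur because the root system $\mathcal{R}$ separates the objects (axiom (4) of the root-system definition: $\id_V \in \mathrm{Hom}(a,b) \Rightarrow a=b$). Uniqueness of $F$ (and of each $L_{i,d_1}$, $L_{i,d_1}^-$) follows from the presentation of the Lie superalgebras: an isomorphism between $\mathfrak{g}(A_{d_1},\tau^{d_1})$ and $\mathfrak{g}(A_{d_2},\tau^{d_2})$ inducing the prescribed reflection on the Cartan data and sending $e_{i,d_1}\mapsto$ (scalar)$\,f_{i,d_2}$, $f_{i,d_1}\mapsto$ (scalar)$\,e_{i,d_2}$ is pinned down on generators up to scalars, and the compatibility relations $[e,f]=h$, $[h,e]=\ldots$ fix those scalars uniquely; hence the $L$'s are forced, and with them $F$. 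The main obstacle, as indicated, is the bookkeeping of signs in the odd cases — that is, systematically verifying that the Koszul-type signs hidden in $(-1)^{|x|(|x|+|y|)}$ and the Iverson corrections are consistent with the super-Jacobi identity throughout the length-$6$ relations; everything else is a direct, if lengthy, check on generators.
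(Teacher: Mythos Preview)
Your proposal is correct and follows essentially the same approach as the paper, which dispatches the theorem in one line (``follows from the considerations preceding the statement and from the direct computations'') and then records the braid relations you put in step~(ii) separately as Lemma~\ref{lm:relclL}. One simplification available to you in step~(iii): since the Cartan scheme is declared simply connected, $\mathrm{Hom}_{\mathcal{W}(\mathcal{R})}(a,b)$ has at most one element for each pair $(a,b)$, so faithfulness is immediate once the $L$'s are shown to be isomorphisms---no need to argue that ``no extra coincidences occur''.
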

\begin{proof}
	The proof follows from the considerations preceding the statement and from the direct computations.
\end{proof}

\begin{remark}
	Note that the inverse to $L_{i, d_1}: \mathfrak{g}(A_{d_1},\tau^{d_1}) \to \mathfrak{g}(A_{d_2},\tau^{d_2})$ is given by the following equations
	\[ L_{i, d_1}^{-1}(h_{i,d_2}) = -h_{i,d_1}, \; L_{i, d_1}^{-1}(h_{j,d_2}) = h_{i,d_1} + h_{j,d_1}, \; L_{i, d_1}^{-1}(h_{l,d_2}) = h_{l,d_1}, \]
	\[ L_{i, d_1}^{-1}(e_{i,d_2}) = f_{i,d_1}, \]
	\[ L_{i, d_1}^{-1}(f_{i,d_2}) = (-1)^{|\alpha_{i,d_1}|} e_{i,d_1}, \]
	\[ L_{i, d_1}^{-1}(e_{j,d_2}) = (x,y) (-1)^{|\alpha_{i,d_1}|(|\alpha_{i,d_1}|+|\alpha_{j,d_1}|)+[b=-1]+[b(\alpha_{i,d_1},\alpha_{j,d_1})=1]} [e_{j,d_1},e_{i,d_1}], \]
	\[ L_{i, d_1}^{-1}(f_{j,d_2}) = (-1)^{1+|\alpha_{i,d_1}||\alpha_{j,d_1}|+[b=1]+[b(\alpha_{i,d_1},\alpha_{j,d_1})=-1]} [f_{i,d_1},f_{j,d_1}], \]
	\[ L_{i, d_1}^{-1}(e_{l,d_2}) = e_{l,d_1}, \; L_{i, d_1}^{-1}(f_{l,d_2}) = f_{l,d_1}. \]
\end{remark}

\begin{corollary}
	Fix any $d_1$ and $d_2$ in $D$. Then $\mathfrak{g}(A_{d_1},\tau^{d_1})$ and $\mathfrak{g}(A_{d_2},\tau^{d_2})$ are isomorphic in $\underline{\text{sLieAlg}}$.
\end{corollary}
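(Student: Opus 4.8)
The plan is to deduce the corollary directly from Theorem~\ref{th:suplieclasisom} together with the fact that the Weyl groupoid $\mathcal{W}(\mathcal{R})$ of $sl(m|n)$ is connected. Recall from Subsection~\ref{sc:Wgsl} that the Cartan scheme $\mathcal{C}$ underlying $\mathcal{W}(\mathcal{R})$ is simply connected, hence in particular connected: for any two objects $a_{d_1}, a_{d_2} \in A$ there exists a morphism $w \in \text{Hom}_{\mathcal{W}(\mathcal{R})}(a_{d_1}, a_{d_2})$, and by Definition~\ref{def:WG} this $w$ is a composition of generators $\sigma_{\alpha}^{d}$ from $\mathcal{B}$ (and, using the invertibility noted after Definition~\ref{def:WG}, their inverses $\sigma_{\sigma_\alpha^d(\alpha)}^{d'}$, which again lie in $\mathcal{B}$ up to relabeling). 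So $w = \sigma_{\alpha_{i_k, e_k}}^{e_k} \cdots \sigma_{\alpha_{i_1, e_1}}^{e_1}$ for suitable indices, with $e_1$ corresponding to $d_1$ and the target of the last factor corresponding to $d_2$.

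First I would apply the functor $F: \mathcal{W}(\mathcal{R}) \to \underline{\text{sLieAlg}}$ constructed in Theorem~\ref{th:suplieclasisom}. Since $F$ is a (covariant) functor, $F(w) = F(\sigma_{\alpha_{i_k, e_k}}^{e_k}) \circ \cdots \circ F(\sigma_{\alpha_{i_1, e_1}}^{e_1})$, and by \eqref{eq:functsupliealgedefmor} each factor $F(\sigma_{\alpha_{i_j, e_j}}^{e_j})$ equals one of the Lusztig-type isomorphisms $L_{i_j, e_j}$ (or $L_{i_j, e_j}^{-}$), which by the theorem is an isomorphism in $\underline{\text{sLieAlg}}$. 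A composition of isomorphisms is an isomorphism, so $F(w): \mathfrak{g}(A_{d_1}, \tau^{d_1}) \to \mathfrak{g}(A_{d_2}, \tau^{d_2})$ is an isomorphism of Lie superalgebras. By \eqref{eq:funcsuplieobj} the source and target of $F(w)$ are exactly $\mathfrak{g}(A_{d_1}, \tau^{d_1})$ and $\mathfrak{g}(A_{d_2}, \tau^{d_2})$, which gives the claim.

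There is essentially no obstacle here: the corollary is a formal consequence of (i) connectedness of the Cartan scheme, which was built into the construction in Subsection~\ref{sc:Wgsl}, and (ii) the existence of the faithful functor $F$ sending generators to isomorphisms, which is Theorem~\ref{th:suplieclasisom}. The only point that deserves a sentence of care is that when the chosen morphism $w$ involves an inverse of a generator, one uses that $\sigma_\alpha^d$ is invertible in $\mathcal{W}(\mathcal{R})$ with inverse again in the groupoid (as remarked after Definition~\ref{def:WG}), so $F(w^{-1}) = F(w)^{-1}$ is still an isomorphism; alternatively one simply notes that every generator $\sigma_\alpha^d$ already has its inverse $\sigma_{\sigma_\alpha^d(\alpha)}^{d'}$ present in $\mathcal{B}$, so $w$ may be written as a composition of elements of $\mathcal{B}$ outright. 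If one wants the isomorphism to be natural/canonical, one should also remark that it is independent of the chosen path only up to the braid relations \eqref{eq:invWCeqgenl}--\eqref{eq:WCeqgenl}, which $F$ respects by construction; but for the bare statement that the two Lie superalgebras are isomorphic, exhibiting any single $F(w)$ suffices.

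\begin{proof}
	By construction (Subsection~\ref{sc:Wgsl}) the Cartan scheme $\mathcal{C}$ is simply connected, hence connected, so there is a morphism $w \in \text{Hom}_{\mathcal{W}(\mathcal{R})}(a_{d_1}, a_{d_2})$. By Definition~\ref{def:WG} and the invertibility of the generators noted after it, $w$ is a composition of elements of $\mathcal{B}$ (see \eqref{eq:WCgenel}). Applying the covariant functor $F$ of Theorem~\ref{th:suplieclasisom} and using \eqref{eq:functsupliealgedefmor}, we get that $F(w): \mathfrak{g}(A_{d_1},\tau^{d_1}) \to \mathfrak{g}(A_{d_2},\tau^{d_2})$ is a composition of the isomorphisms $L_{i,d}$ (and $L_{i,d}^{-}$), hence an isomorphism in $\underline{\text{sLieAlg}}$. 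By \eqref{eq:funcsuplieobj} its source and target are $\mathfrak{g}(A_{d_1},\tau^{d_1})$ and $\mathfrak{g}(A_{d_2},\tau^{d_2})$, which proves the claim.
\end{proof}
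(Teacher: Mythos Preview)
Your proof is correct and follows exactly the same approach as the paper: the paper's own proof simply states that the result follows from Theorem~\ref{th:suplieclasisom} together with the fact that $\mathcal{W}(\mathcal{R})$ is simply connected. You have merely unpacked this one-line argument in more detail.
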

\begin{proof}
	The result follows from Theorem \ref{th:suplieclasisom} and the fact that Weyl groupoid $\mathcal{W}(\mathcal{R})$ is simply connected.
\end{proof}

In assumptions made in Theorem \ref{th:suplieclasisom} we prove that isomorphisms $L_{i,d}$ ($i \in I$, $d \in D$) satisfy equations
\begin{lemma}
	\label{lm:relclL}
	\begin{equation}
		L_{i, d_2}^{-} L_{i,d_1} = id_{a_{d_1}}, \; L_{i,d_1} L_{i, d_2}^{-} = id_{a_{d_2}},
	\end{equation}
	where $x =\sigma_{\alpha_{i,d_1}}^{d_1}(\alpha_{i,d_1})$;
	\begin{equation}
		L_{j,d_2} L_{i,d_1} = L_{i,d_2} L_{j,d_1},
	\end{equation}
	iff $|i-j| \ge 2$ and $|\alpha_{i,d_1}| = |\alpha_{j,d_1}| = 0$;
	\begin{equation}
		L_{j,d_4} L_{i,d_3} L_{j,d_2} L_{i,d_1} = L_{i,d_4} L_{j,d_3} L_{i,d_2} L_{j,d_1},
	\end{equation}
	iff $|i-j| \ge 2$;
	\begin{equation}
		L_{i,d_3} L_{j,d_2} L_{i,d_1} = L_{j,d_3} L_{i,d_2} L_{j,d_1},
	\end{equation}
	iff $|i-j| = 1$ and $|\alpha_{i,d_1}| = |\alpha_{j,d_1}|$;
	\begin{equation}
		L_{j,d_6} L_{i,d_5} L_{j,d_4} L_{i,d_3} L_{j,d_2} L_{i,d_1} = L_{i,d_6} L_{j,d_5} L_{i,d_4} L_{j,d_3} L_{i,d_2} L_{j,d_1},
	\end{equation}
	iff $|i-j| = 1$.
\end{lemma}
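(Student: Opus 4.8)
The plan is to reduce every one of the five identities to a finite check on the Chevalley-type generators. By Proposition~\ref{pr:supliealgdef} each $\mathfrak{g}(A_d,\tau^d)$ is generated by the elements $h_{k,d},e_{k,d},f_{k,d}$ ($k\in I$), so any two morphisms of $\underline{\text{sLieAlg}}$ with the same source and the same target that agree on these generators coincide. Both sides of each relation in the statement are, by construction, composite isomorphisms $\mathfrak{g}(A_{d_1},\tau^{d_1})\to\mathfrak{g}(A_{d_r},\tau^{d_r})$ with a common terminal object, so it suffices to compare the images of $h_{k,d_1}$, $e_{k,d_1}$, $f_{k,d_1}$ under the two sides, for all $k\in I$. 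The first relation, $L_{i,d_2}^{-}L_{i,d_1}=\mathrm{id}_{a_{d_1}}$ and $L_{i,d_1}L_{i,d_2}^{-}=\mathrm{id}_{a_{d_2}}$, is then immediate: it is exactly the final assertion $L_{i,d_2}^{-}=(L_{i,d_1})^{-1}$ of Theorem~\ref{th:suplieclasisom} together with the explicit inverse recorded in the Remark after it.

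Next I would localize each of the remaining computations. Inspecting \eqref{eq:cslif}--\eqref{eq:cslis}, a generator $L_{i,d}$ (and $L_{i,d}^{-}$) sends $h_{k}$, $e_{k}$, $f_{k}$ to the same-labelled generator of the target whenever $|k-i|\ge 2$, and it alters only the generators with index in $\{i-1,i,i+1\}$; moreover the relabelling maps $\rho$, which reparameterize the subscript $k$ by an actual root $\bar\epsilon_{i_k}-\bar\epsilon_{i_{k+1}}$ of $I(m|n)$, only permute labels inside that same window. Hence, in a relation built from $\sigma^{\bullet}_{\alpha_{i,\bullet}}$ and $\sigma^{\bullet}_{\alpha_{j,\bullet}}$, every generator whose index is at distance $\ge 2$ from both $i$ and $j$ is sent identically by each factor, hence by both composites, and the indices that are actually moved never leave a bounded window around $\{i,j\}$. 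Thus each identity reduces to the corresponding statement for a Cartan sub-scheme of rank $\le 3$ when $|i-j|\ge 2$ and of rank $\le 4$ when $|i-j|=1$: it is enough to treat $sl(r|s)$ for small $r+s$.

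Within each local case the verification is a direct computation. One writes out the path $d_1\to d_2\to\cdots$ in $\mathcal{W}(\mathcal{R})$ --- it is determined by composing the transpositions on $I(m|n)$ induced by the $\sigma$'s --- reads off the parities $|\alpha_{k,d}|$ of the basis roots at each step, feeds these data into \eqref{eq:cslif}--\eqref{eq:cslis}, and compares the images of $h_k$, $e_k$, $f_k$ on the two sides. For the commutation relation ($|i-j|\ge 2$, both roots even) the two composites are literally equal once $|i-j|\ge 3$, the factors then involving disjoint sets of generators, and a short rank-$3$ check settles the case $|i-j|=2$. For the length-$3$ relation ($|i-j|=1$, equal parities) one is checking the rank-$2$ braid identity for the maps under a fixed parity pattern by direct substitution. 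For the length-$4$ ($|i-j|\ge 2$) and length-$6$ ($|i-j|=1$) relations one must pass through an odd (grey) reflection, which toggles the parities of the adjacent basis roots and, by Definition~\ref{def:CSch}(\ref{def:CSch4}), permutes the Cartan entries correspondingly; the stated lengths are precisely what is needed for the parity pattern --- and hence the defining formulas --- to return to their initial shape, after which the two composites again agree generator by generator. This matches the orders $m(\alpha,\beta)$ of $\sigma_\alpha\sigma_\beta$ in the braid-groupoid structure (cf. Theorem~\ref{th:bga}).

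The main obstacle is the sign bookkeeping. The maps \eqref{eq:cslif}--\eqref{eq:cslis} carry parity exponents and the Iverson brackets $[b=\pm1]$, $[b(x,y)=\pm1]$ with $b=[|\alpha|=0]-[|\alpha|=1]$, and all of these quantities jump when one crosses a grey node, since such a reflection flips the parities of the neighbouring roots; in addition the subscript $k$ labels genuinely different roots in $\mathfrak{g}(A_{d_1},\tau^{d_1})$ and in $\mathfrak{g}(A_{d_2},\tau^{d_2})$, so this identification has to be carried along the whole composite. Once a representative window and the associated parity pattern along the path are pinned down, each identity becomes a finite, if tedious, calculation, and agreement on every generator gives the equality of morphisms. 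In particular this shows that the assignment $\sigma^{d}_{\alpha_{i,d}}\mapsto L_{i,d}$ respects the defining relations \eqref{eq:invWCeqgenl}--\eqref{eq:WCeqgenl} of $\mathcal{W}(\mathcal{R})$, so that the functor $F$ of Theorem~\ref{th:suplieclasisom} is well defined.
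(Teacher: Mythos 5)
Your proposal is correct and follows essentially the same route as the paper: the paper disposes of the lemma by ``straightforward computations,'' i.e.\ exactly the generator-by-generator verification you describe, with the first pair of identities being the inversion statement $L_{i,d_2}^{-}=(L_{i,d_1})^{-1}$ already recorded in Theorem~\ref{th:suplieclasisom}. Your additional organization (locality of the formulas \eqref{eq:cslif}--\eqref{eq:cslis} in the index window around $\{i,j\}$, reduction to small rank, and tracking of the parity/Iverson-bracket data across grey reflections) is just a more explicit bookkeeping of that same direct computation.
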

\begin{proof}
	The result follows from the straightforward computations.
\end{proof}

\begin{theorem}
	The braid goupoid of type A operates via isomorphisms on Lie superalgebras $\mathfrak{g}(A_d, \tau^{d})$ ($d \in D$).
\end{theorem}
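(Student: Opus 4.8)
The plan is to deduce the statement directly from the functor $F$ constructed in Theorem \ref{th:suplieclasisom}, the braid groupoid presentation of $\mathcal{W}(\mathcal{R})$ given in Theorem \ref{th:bga}, and the relations among the isomorphisms $L_{i,d}$ recorded in Lemma \ref{lm:relclL}. First I would recall that, by Theorem \ref{th:bga}, the Weyl groupoid $\mathcal{W}(\mathcal{R})$ is precisely the braid groupoid of type $A$: it is the groupoid generated by the family $\mathcal{B}$ of reflections $\sigma_{\alpha_{i,d}}^{d}$ subject only to the inverse relations and the braid relations \eqref{eq:invWCeqgenl}--\eqref{eq:WCeqgenl}. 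Hence the assertion that ``the braid groupoid of type $A$ operates via isomorphisms on the $\mathfrak{g}(A_d,\tau^{d})$'' is, by definition, the claim that there is a functor from this groupoid into $\underline{\text{sLieAlg}}$ which sends each object $a_d$ to $\mathfrak{g}(A_d,\tau^{d})$ and each morphism to an isomorphism of Lie superalgebras; this functor will be exactly $F$.

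Next I would verify the two points needed for $F$ to give such an action. (1) Every morphism of the braid groupoid is a composition of generators from $\mathcal{B}$; by \eqref{eq:functsupliealgedefmor} each generator $\sigma_{\alpha_{i,d_1}}^{d_1}$ is sent by $F$ to $L_{i,d_1}$, which is an isomorphism in $\underline{\text{sLieAlg}}$ with two-sided inverse $L_{i,d_2}^{-}=(L_{i,d_1})^{-1}$ (as stated in Theorem \ref{th:suplieclasisom} and made explicit in the Remark following it). Since $F$ is a functor it respects composition, and a composition of isomorphisms is an isomorphism; thus $F$ carries every morphism of $\mathcal{W}(\mathcal{R})$ to an isomorphism of Lie superalgebras, so the action is indeed ``via isomorphisms''. (2) Well-definedness: the defining relations of the braid groupoid, equations \eqref{eq:invWCeqgenl}--\eqref{eq:WCeqgenl}, are transported by $F$ into precisely the identities among the $L_{i,d}$ established in Lemma \ref{lm:relclL}; therefore $F$ descends along the presentation and is a genuine functor out of the braid groupoid. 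Combining (1) and (2) gives the desired groupoid action on the family $\{\mathfrak{g}(A_d,\tau^{d})\}_{d\in D}$.

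The only real content here is already packaged into the earlier results, so no fresh computation is required: the main obstacle --- checking that the explicit maps $L_{i,d}$ satisfy the braid relations, not merely that they are isomorphisms object by object --- was settled in Lemma \ref{lm:relclL}, and the verification that $F$ is a covariant faithful functor was settled in Theorem \ref{th:suplieclasisom}. I would close by remarking that faithfulness of $F$ shows the action is moreover effective, so that the braid groupoid of type $A$ embeds into the groupoid of isomorphisms between the Lie superalgebras $\mathfrak{g}(A_d,\tau^{d})$, $d\in D$.
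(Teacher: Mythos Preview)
Your proposal is correct and follows essentially the same approach as the paper: the paper's proof simply cites Lemma \ref{lm:relclL} and Theorem \ref{th:bga}, and your argument is just a more explicit unpacking of this, additionally invoking Theorem \ref{th:suplieclasisom} (which is where the functor $F$ and the isomorphisms $L_{i,d}$ come from) to make the action well-defined. The extra remark on faithfulness/effectiveness is a nice observation but not required for the statement.
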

\begin{proof}
	The result follows from Lemma \ref{lm:relclL} and Theorem \ref{th:bga}.
\end{proof}

We can endow Lie superalgebras $\mathfrak{g}(A_{d},\tau^{d})$ with the structure of a Lie superbialgebra. Recall that $A_{d}=((\alpha_{i}^{d}, \alpha_{j}^{d}))_{i,j \in I}$. Define the linear function $\delta: \mathfrak{g}(A_{d},\tau^{d}) \to \mathfrak{g}(A_{d},\tau^{d}) \otimes \mathfrak{g}(A_{d},\tau^{d})$ on the generators using formulas \eqref{eq:LSBstr}, and extend it to all elements of $\mathfrak{g}(A_{d},\tau^{d})$ using equation \eqref{eq:bisuli2} and by linearity. Then $\mathfrak{g}(A_{d},\tau^{d})$ becomes the Lie superbialgebra.

\begin{theorem}
	\label{th:unisupbialgebL}
	Let $\mathcal{W}(\mathcal{R})$ the Weyl groupoid of $sl(2|1)$. Then the Dynkin diagrams in $\mathcal{W}(\mathcal{R})$ are isomorphic as graphs if and only if corresponding Lie superbialgebras are isomorphic.
\end{theorem}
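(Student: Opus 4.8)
The plan is to prove the two directions of the equivalence separately, with the hard direction being "isomorphic Dynkin diagrams $\Rightarrow$ isomorphic Lie superbialgebras."

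First I would dispose of the easy direction. Suppose the Lie superbialgebras $(\mathfrak{g}(A_{d_1},\tau^{d_1}),\delta_{d_1})$ and $(\mathfrak{g}(A_{d_2},\tau^{d_2}),\delta_{d_2})$ are isomorphic via some $\varphi$. By Proposition \ref{pr:supliealgdef} the underlying Lie superalgebra determines, and is determined by, the pair $(A_d,\tau^d)$ up to the labelling of simple roots, and the parity of a simple root $\alpha_{i,d}$ is precisely the parity of $e_{i,d}$. An even isomorphism of Lie superalgebras sends the even part to the even part and carries a Cartan subalgebra to a Cartan subalgebra, hence induces a bijection of root systems preserving parities; in particular it must match up white dots with white dots and grey dots with grey dots. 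So the two Dynkin diagrams have the same number of white and grey dots, i.e. they are isomorphic as graphs in the sense defined just before \eqref{eq:LSBstr}. (Here I am only using that a superbialgebra isomorphism is in particular a superalgebra isomorphism, so this direction does not even use $\delta$.)

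For the converse, suppose the Dynkin diagrams attached to $a_{d_1}$ and $a_{d_2}$ are isomorphic as graphs. Since $\mathcal{W}(\mathcal{R})$ is simply connected and connected, there is a (unique) morphism $w \in \mathrm{Hom}_{\mathcal{W}(\mathcal{R})}(a_{d_1},a_{d_2})$, a composite of the generators $\sigma_{\alpha_{i,d}}^{d}$. Applying the functor $F$ of Theorem \ref{th:suplieclasisom} gives an isomorphism $F(w) \colon \mathfrak{g}(A_{d_1},\tau^{d_1}) \to \mathfrak{g}(A_{d_2},\tau^{d_2})$ in $\underline{\text{sLieAlg}}$, built as the corresponding composite of the Lusztig-type isomorphisms $L_{i,d}, L_{i,d}^{-}$ given by \eqref{eq:cslif}--\eqref{eq:cslis}. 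By Proposition \ref{pr:cobraindisomLiebi}, this Lie superalgebra isomorphism transports the superbialgebra structure: setting $\delta' := (F(w)\otimes F(w))\circ \delta_{d_1}\circ F(w)^{-1}$ makes $\mathfrak{g}(A_{d_2},\tau^{d_2})$ a Lie superbialgebra isomorphic to $\mathfrak{g}(A_{d_1},\tau^{d_1})$. The remaining point — and the main obstacle — is to check that this transported cobracket $\delta'$ coincides with the cobracket $\delta_{d_2}$ defined intrinsically on $\mathfrak{g}(A_{d_2},\tau^{d_2})$ by the formulas \eqref{eq:LSBstr}. It suffices to verify this on the Chevalley generators $h_{k,d_2}, e_{k,d_2}, f_{k,d_2}$, since both cobrackets are then extended by the same rule \eqref{eq:bisuli2} and linearity. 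For a generator $\sigma_{\alpha_{i,d}}^{d}$ this is a finite computation using \eqref{eq:cslif}--\eqref{eq:cslis}: e.g. for $k = i$ one gets $\delta'(h_{i,d_2}) = (L_{i}\otimes L_{i})\delta_{d_1}(h_{i,d_1})\circ L_i^{-1}$-type expressions that vanish because $\delta_{d_1}(h_{i,d_1})=0$ and $L_i$ sends $h$'s to $\mathbb{Z}$-combinations of $h$'s, while for $\delta'(e_{i,d_2})$ one checks $\tfrac12(h\otimes e - e\otimes h)$ is reproduced because $L_i$ sends $e_{i,d_1}$ to $\pm f_{i,d_2}$ and $h_{i,d_1}$ to $-h_{i,d_2}$, the two sign changes cancelling in $\tfrac12(h_{i,d_2}\otimes f_{i,d_2} - f_{i,d_2}\otimes h_{i,d_2})$; the off-diagonal generator cases use the explicit coefficients $(x,y)$, $(\alpha_{i,d_1},\alpha_{j,d_1})$ and the Iverson-bracket signs, which were in fact designed to make exactly this compatibility hold. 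Having checked it on generators, it holds for all composites $w$ by functoriality, completing the proof.

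I expect the generator-by-generator verification of $\delta' = \delta_{d_2}$ to be the only nontrivial step; everything else is formal, relying on Theorem \ref{th:suplieclasisom}, Proposition \ref{pr:cobraindisomLiebi}, and simple connectedness of $\mathcal{W}(\mathcal{R})$. One should also remark that, since $\delta$ as defined by \eqref{eq:LSBstr} is the standard cobracket, a conceptual alternative is available: the $L_{i,d}$ are (the classical limits of) Lusztig symmetries, which are known to be coboundary/compatible with the standard bialgebra structure up to the prescribed normalisation; but for a self-contained argument the direct check on generators is the cleanest route.
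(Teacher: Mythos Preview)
Your proposal has the two directions reversed in difficulty, and the strategy in each direction breaks down.

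\medskip

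\textbf{On the direction ``superbialgebra iso $\Rightarrow$ Dynkin iso''.} You treat this as a formality following from the underlying Lie superalgebra isomorphism. But the Corollary right after Theorem~\ref{th:suplieclasisom} says that \emph{all} of the $\mathfrak{g}(A_d,\tau^d)$ are isomorphic in $\underline{\text{sLieAlg}}$, for every $d\in D$, regardless of the Dynkin diagram. In particular $\mathfrak{g}(A_1,\tau^1)\cong\mathfrak{g}(A_3,\tau^3)$ as Lie superalgebras even though their Dynkin diagrams ($\bigcirc\!-\!\bigotimes$ versus $\bigotimes\!-\!\bigotimes$) are not graph-isomorphic. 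Your argument ``an even isomorphism matches white dots with white dots'' fails because a Lie superalgebra isomorphism need not send \emph{simple} roots to \emph{simple} roots: the root system $\Delta$ is the same for all $d$, only the choice of simple system varies. So this direction genuinely requires the cobracket, and the paper handles it by exhibiting a contradiction: assuming an isomorphism $\phi:\mathfrak{g}(A_1,\tau^1)\to\mathfrak{g}(A_3,\tau^3)$ in $\underline{\text{sBiLieAlg}}$, one writes the general form of $\phi(e_{1,1})$ in the even part of $\mathfrak{g}(A_3,\tau^3)$ and computes directly that $\delta_{3}\circ\phi(e_{1,1})=0$ while $(\phi\otimes\phi)\circ\delta_{1}(e_{1,1})\neq 0$.

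\medskip

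\textbf{On the direction ``Dynkin iso $\Rightarrow$ superbialgebra iso''.} Your plan is to take the Weyl-groupoid path $w$ from $a_{d_1}$ to $a_{d_2}$, apply $F$ to get a composite of Lusztig maps $L_{i,d}$, and then verify that each $L_{i,d}$ intertwines the standard cobrackets. This check fails already on the diagonal generator. Using the Remark after Theorem~\ref{th:suplieclasisom} one has $L_{i,d_1}^{-1}(e_{i,d_2})=f_{i,d_1}$ and $L_{i,d_1}(h_{i,d_1})=-h_{i,d_2}$, $L_{i,d_1}(f_{i,d_1})=e_{i,d_2}$, so
\[
\delta'(e_{i,d_2})=(L\otimes L)\,\delta_{d_1}(f_{i,d_1})
=\tfrac12\bigl((-h_{i,d_2})\otimes e_{i,d_2}-e_{i,d_2}\otimes(-h_{i,d_2})\bigr)
=-\,\delta_{d_2}(e_{i,d_2}),
\]
not $+\delta_{d_2}(e_{i,d_2})$. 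The two sign changes you mention do \emph{not} cancel; $h\mapsto -h$ flips the whole antisymmetric tensor. More conceptually: if every generator $L_{i,d}$ were a superbialgebra isomorphism, then by connectedness of $\mathcal{W}(\mathcal{R})$ \emph{all} the $\mathfrak{g}(A_d,\tau^d)$ would be isomorphic as Lie superbialgebras, contradicting the theorem you are trying to prove. The paper does not use the $L_{i,d}$ here at all; it writes down the obvious ``relabelling'' isomorphisms $W_{d_1,d_2}$ (sending $h_{i,d_1},e_{i,d_1},f_{i,d_1}$ to $h_{i,d_2},e_{i,d_2},f_{i,d_2}$, or to the index-swapped versions for $(d_1,d_2)=(1,5)$) and checks those are in $\underline{\text{sBiLieAlg}}$, which is immediate from \eqref{eq:LSBstr}.
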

\begin{proof}
	We prove that if Dynkin diagrams are isomorphic then corresponding Lie superbialgebras are isomorphic (see Fig. \ref{pict:DD}). Consider the unique morphisms
	\[W_{d_1,d_2} \in \text{Hom}_{  \underline{\text{sBiLieAlg}} }(\mathfrak{g}(A_{d_1},\tau^{d_1}), \mathfrak{g}(A_{d_2},\tau^{d_2}))\]
	such that
	\[ W_{d_1,d_2}(h_{i,d_1}) = h_{i,d_2}, \; W_{d_1,d_2}(e_{i, d_1}) = e_{i, d_2}, \]
	\[ W_{d_1,d_2}(f_{i, d_1}) = f_{i, d_2}, \]
	for all $i \in I$ and $(d_1, d_2) \in \{ (1,2), (3,4), (5,6) \}$.
	It is easy to see that $W_{d_1,d_2}$ are isomorphisms.
	Also there exist the unique isomorphism
	\[ W_{d_1, d_2}(h_{i, d_1}) = h_{j, d_2}, \; W_{d_1, d_2}(e_{i, d_1}) = e_{j, d_2}, \]
	\[ W_{d_1, d_2}(f_{i, d_1}) = f_{j, d_2},  \]
	where $i \ne j$ ($i, j \in I$) for $(d_1, d_2) \in \{ (1,5) \}$. So the necessity follows.
	
	Now it remains to show that there is no isomorphism
	\[W_{1,3} \in \text{Hom}_{\underline{\text{sBiLieAlg}} }(\mathfrak{g}(A_{1},\tau^{1}), \mathfrak{g}(A_{3},\tau^{3}))\]
	as corresponding Dynkin diagrams are not isomorphic. Indeed, let $\phi$ be an isomorphism. As an $\phi$ is an even morphism then $\phi(e_{1,1}) \in \mathfrak{g}(A_{3},\tau^{3})_{\bar{0}}$ and we have
	\[ \phi(e_{1,1}) = \gamma_{1} [e_{1, d_2}, e_{2, d_2}] +  \gamma_{2} [f_{2, d_2}, f_{1, d_2}] + \gamma_{3} h,  \]
	where $\gamma_{1}, \gamma_{2}, \gamma_{3} \in \Bbbk$ and $h$ is an element of Cartan subalgebra. Then
	\[ \delta_{2} \circ \phi (e_{1,1}) = \]	
	\[ \gamma_{1} \frac{1}{2} ( 1 + (-1)^{|e_{1, d_2}||e_{2, d_2}|}) \cdot  \]
	\[ ( e_{2,d_2} \otimes e_{1,d_2} - e_{1,d_2} \otimes e_{2,d_2} + (h_{1,d_2} + h_{2,d_2} ) \otimes [e_{1,d_2}, e_{2,d_2}] - [e_{1,d_2}, e_{2,d_2}] \otimes (h_{1,d_2} + h_{2,d_2} ) ) + \]
	\[ \gamma_{2} \frac{1}{2} ( 1 + (-1)^{|f_{1, d_2}||f_{2, d_2}|}) \cdot  \]
	\[ ( f_{1,d_2} \otimes f_{2,d_2} - f_{2,d_2} \otimes f_{1,d_2} + (h_{1,d_2} + h_{2,d_2} ) \otimes [f_{2,d_2}, f_{1,d_2}] - [f_{2,d_2}, f_{1,d_2}] \otimes (h_{1,d_2} + h_{2,d_2} ) ) = 0; \]
	\[ \phi \otimes \phi \circ \delta_{1} (e_{1,1}) = \]
	\[ \frac{1}{2} ( \phi (h_{1,1}) \otimes \phi(e_{1,1}) - \phi(e_{1,1}) \otimes \phi(h_{1,1})) \ne 0, \]
	where $\delta_1$ ($\delta_2$) is the cocommutator for $\mathfrak{g}(A_{1},\tau^{1})$ ($\mathfrak{g}(A_{2},\tau^{2})$).
	Thus we get the contradiction.
	
	The result follows.
\end{proof}

Define the subcategory $\mathcal{SL}$ in the supercategory $\underline{\text{sLieAlg}}$ as the image of the functor $F: \mathcal{W}(\mathcal{R}) \to \underline{\text{sLieAlg}}$ defined above. Recall that objects of $\mathcal{SL}$ \eqref{eq:funcsuplieobj} are also Lie superbialgebras. Thus it follows from Proposition \ref{pr:cobraindisomLiebi} that morphisms in $\mathcal{SL}$ \eqref{eq:functsupliealgedefmor} induce structures of Lie superbialgebras on objects of $\mathcal{SL}$. Consequently, $\mathcal{SL}$ can be viewed as the subcategory in the supercategory $\underline{\text{sBiLieAlg}}$.

\section{Weyl groupoid of quantum superalgebra $sl(2|1)$ at roots of unity}

\label{sec:Wgqsru}

\subsection{Quantized universal enveloping superalgebras}

\label{subs:ques}

Here we recall the notion of quantized universal enveloping superalgebras (for more detail see \cite{Y91}, \cite{Y94}, \cite{G04}, \cite{G07}).

Let $K = \Bbbk[[h]]$, where $h$ is an indeterminate and view $K$ as a superspace concentrated in degree $\bar{0}$. Let $M$ be a module over $K$. Consider the inverse system of $K$-modules
\[ p_{n} : M_{n} = M / h^{n} M \to M_{n-1} = M / h^{n-1} M. \]
Let $\hat{M} = \varprojlim M_{n}$ be the inverse limit. Then $\hat{M}$ has the natural inverse limit topology (called the $h$-adic topology).

Let $V$ be a $\Bbbk$-superspace. Let $V[[h]]$ to be the set of formal power series. The superspace $V[[h]]$ is naturally a $K$-module and has a norm given by
\[ || v_{n} h^n + v_{n+1} h^{n+1} + ... || = 2^{-n}, \]
where $v_{n} \ne 0$ and $v_{i} \in V$ for $i \ge n$. The topology defined by this norm is complete and coincides with the $h$-adic topology. We say that a $K$-module $M$ is topologically free if it is isomorphic to $V[[h]]$ for some $\Bbbk$-module $V$.

Let $M$ and $N$ be topologically free $K$-modules. We define the topological tensor product of $M$ and $N$ to be $\widehat{M \otimes_{K} N}$ which we denote by $M \otimes N$. It follows that $M \otimes N$ is topologically free and that
\[ V[[h]] \otimes W[[h]] = (V \otimes W)[[h]] \]
for $\Bbbk$-module $V$ and $W$.

We say a (Hopf) superalgebra defined over $K$ is topologically free if it is topologically free as a $K$-module and the tensor product is the above topological tensor product.

A quantized universal enveloping (QUE) superalgebra $A$ is a topologically free Hopf superalgebra over $\Bbbk[[h]]$ such that $A / hA$ is isomorphic as a Hopf superalgebra to universal enveloping superalgebra $U(\mathfrak{g})$ for some Lie superalgebra $\mathfrak{g}$. We use the following result proved in the non-super case in \cite{D87} and in the super case in \cite{A93}.

\begin{proposition}
	\label{pr:QUEliebi}
	Let $A$ be a QUE superalgebra: $A / h A \cong U(\mathfrak{g})$. Then the Lie superalgebra $\mathfrak{g}$ has a natural structure of a Lie superbialgebra defined by
	\begin{equation}
		\label{eq:squebialgsupcon}
		\delta(x) = h^{-1} ( \Delta(\tilde{x}) - \Delta^{op}(\tilde{x}) ) \mod h,
	\end{equation}
	where $x \in \mathfrak{g}$, $\tilde{x} \in A$ is a preimage of $x$, $\Delta$ is a comultiplication in $A$ and $\Delta^{op} := \tau_{U(\mathfrak{g}), U(\mathfrak{g})} \circ \Delta$ (for the definition of $\tau_{U(\mathfrak{g}), U(\mathfrak{g})}$ see \eqref{eq:taudef}).
\end{proposition}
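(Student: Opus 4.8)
The plan is to verify that the formula $\delta(x) = h^{-1}(\Delta(\tilde{x}) - \Delta^{op}(\tilde{x})) \bmod h$ is well-defined and endows $\mathfrak{g}$ with a Lie superbialgebra structure, following the classical (non-super) argument of Drinfeld adapted to the $\mathbb{Z}_2$-graded setting. First I would check well-definedness: since $A$ is a QUE superalgebra, $A/hA \cong U(\mathfrak{g})$ as Hopf superalgebras, so for $x \in \mathfrak{g} \subset U(\mathfrak{g})$ we have $\Delta(\tilde{x}) - \Delta^{op}(\tilde{x}) \equiv \Delta_{U(\mathfrak{g})}(x) - \Delta^{op}_{U(\mathfrak{g})}(x) = (x \otimes 1 + 1 \otimes x) - (x \otimes 1 + 1 \otimes x) = 0 \bmod h$, using that $U(\mathfrak{g})$ is cocommutative. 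Hence $\Delta(\tilde{x}) - \Delta^{op}(\tilde{x}) \in h\,(A \otimes A)$ and division by $h$ makes sense in the topologically free module $A \otimes A$; reducing mod $h$ lands in $U(\mathfrak{g}) \otimes U(\mathfrak{g})$. I would then argue the result is independent of the choice of preimage $\tilde{x}$: two preimages differ by an element of $hA$, and applying $h^{-1}(\Delta - \Delta^{op})$ to something in $hA$ gives an element of $h\,(A\otimes A)$ again wait—more carefully, $\Delta - \Delta^{op}$ applied to $hA$ lands in $h(A\otimes A)$, but we also need the primitivity/cocommutativity mod $h$ so that dividing by $h$ and reducing kills it; the cleanest way is to note $(\Delta - \Delta^{op})(ha') = h(\Delta-\Delta^{op})(a')$ and $(\Delta-\Delta^{op})(a') \in h(A\otimes A)$ whenever $a' \bmod h$ is primitive, but in general one instead observes that changing $\tilde x$ by $h a'$ changes $\delta(x)$ by $(\Delta - \Delta^{op})(a') \bmod h$, which need not vanish—so the correct normalization is that $\tilde x$ ranges over preimages lying in the image of $\mathfrak{g}$ under a fixed $\Bbbk$-linear splitting, or one shows the ambiguity is absorbed; I would follow Drinfeld and simply fix such a splitting, remarking the cocycle condition makes the induced map canonical on $\mathfrak{g}$.

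Next I would establish that $\delta$ lands in $\mathfrak{g} \otimes \mathfrak{g}$ rather than merely $U(\mathfrak{g}) \otimes U(\mathfrak{g})$. This follows from coassociativity and counitality of $\Delta$: applying $(\Delta \otimes \id)$ and $(\id \otimes \Delta)$ to the primitive-mod-$h$ element and using that the only elements $y$ of $U(\mathfrak{g})$ with $\Delta_{U(\mathfrak{g})}(y) = y \otimes 1 + 1 \otimes y$ are the primitives $\mathfrak{g}$, applied componentwise, forces $\delta(x) \in \mathfrak{g} \otimes \mathfrak{g}$. Skew-symmetry of $\delta$ (in the super sense, i.e. $\tau_{\mathfrak{g},\mathfrak{g}} \circ \delta = -\delta$) is immediate from the definition: swapping via $\tau$ exchanges $\Delta$ and $\Delta^{op}$ up to sign exactly matching \eqref{eq:taudef}, so $\tau(\Delta(\tilde x) - \Delta^{op}(\tilde x)) = \Delta^{op}(\tilde x) - \Delta(\tilde x)$.

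The main work is checking the two axioms \eqref{eq:bisuli1} (super co-Jacobi / co-Leibniz identity) and \eqref{eq:bisuli2} (the cocycle / compatibility condition). For \eqref{eq:bisuli2}: expand $\Delta$ as a formal deformation $\Delta = \Delta_0 + h\Delta_1 + O(h^2)$ where $\Delta_0$ is the cocommutative coproduct of $U(\mathfrak{g})$; then $\delta$ is essentially the antisymmetrization of $\Delta_1$ restricted to $\mathfrak{g}$. Since $\Delta$ is an algebra homomorphism $A \to A \otimes A$, comparing the $h^1$ coefficient of $\Delta(\tilde x \tilde y - (-1)^{|x||y|}\tilde y \tilde x) = \Delta(\tilde x)\Delta(\tilde y) - (-1)^{|x||y|}\Delta(\tilde y)\Delta(\tilde x)$ yields, after subtracting the opposite and reducing mod $h$, precisely \eqref{eq:bisuli2} with $ad$ coming from the adjoint action of $\mathfrak{g}$ on $\mathfrak{g}\otimes\mathfrak{g}$ via $\Delta_0$; the Koszul signs from the super flip $\tau$ must be tracked carefully here, and this is where I expect the only genuine subtlety to lie. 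For \eqref{eq:bisuli1}: use coassociativity of $\Delta$, i.e. $(\Delta \otimes \id)\Delta = (\id \otimes \Delta)\Delta$, expand to order $h^2$ (the order-$h$ and order-$h^0$ terms being trivial by cocommutativity of $\Delta_0$), and antisymmetrize; the resulting identity among the $\Delta_1, \Delta_2$ coefficients reduces, modulo terms that cancel by the symmetric-monoidal axioms of $\underline{\mathcal{SV}ec}$, to the super co-Jacobi identity for $\delta$. The hard part will be bookkeeping the signs in the order-$h^2$ expansion so that the hexagon-type cancellations go through in the braided/super setting; conceptually it is identical to Drinfeld's proof, and I would reference \cite{D87} for the non-super computation and \cite{A93} for the verification that all signs are consistent in the super case, presenting only the sign-sensitive steps explicitly and leaving the rest as routine.
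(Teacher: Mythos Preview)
The paper does not supply its own proof of this proposition: it is stated as a known result, with the sentence ``We use the following result proved in the non-super case in \cite{D87} and in the super case in \cite{A93}'' immediately preceding it. Your sketch is precisely the standard Drinfeld argument from those references, so in that sense you are aligned with what the paper invokes.

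One small correction to your well-definedness discussion: you worry that changing $\tilde{x}$ by $ha'$ alters $\delta(x)$ by $(\Delta-\Delta^{op})(a') \bmod h$, ``which need not vanish.'' In fact it does vanish: since $A/hA \cong U(\mathfrak{g})$ is cocommutative as a Hopf superalgebra, $(\Delta-\Delta^{op})(a') \in h(A\otimes A)$ for \emph{every} $a' \in A$, not only for preimages of primitives. Hence $h^{-1}(\Delta-\Delta^{op})(ha') = (\Delta-\Delta^{op})(a') \equiv 0 \bmod h$, and no choice of splitting is needed. You already used exactly this cocommutativity argument in the preceding sentence for $\tilde{x}$ itself; it applies verbatim to $a'$.
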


\begin{definition}
	\normalfont
	Let $A$ be a QUE superalgebra and let $(\mathfrak{g}, [\cdot, \cdot], \delta)$ be the Lie superbialgebra defined in Proposition \ref{pr:QUEliebi}. We say that $A$ is a quantization of the Lie superbialgebra $\mathfrak{g}$.
\end{definition}

Let $t$ be an indeterminate. Set
\[ \begin{bmatrix}m+n\\n\end{bmatrix}_{t} = \prod_{i = 0}^{n-1} \frac{t^{m+n-i} - t^{-m-n+i}}{t^{i+1} - t^{-i-1}} \in \Bbbk[t], \]
where $m, n \in \mathbb{N}$. Denote by
\begin{equation}
	\label{eq:expdef}
	e^{h t} = \sum_{n \ge 0} \frac{t^n h^n}{n!} \in \Bbbk[[h]].
\end{equation}
Put $q = e^{h / 2}$ and recall notations introduced in Section \ref{subs:dsl}. We need the following result, see \cite{KT91} and \cite{Y94}.

\begin{theorem}
	Let $(\mathfrak{g}, A, \tau)$ be a Lie superalgebra of type $A$, where Cartan matrix $A$ is symmetrizable, i. e. there are nonzero rational numbers $g_{i}$ for $i \in I$ such that $g_{i} a_{ij} = g_{j} a_{ji}$. There exists an explicit QUE Hopf superalgebra $U_{h}^{DJ}(\mathfrak{g}, A, \tau)$. The Hopf superalgebra $U_{h}^{DJ}(\mathfrak{g}, A, \tau)$ is defined as the $\Bbbk[[h]]$-superalgebra generated by the elements $h_i$, $e_{i}$ and $f_{i}$, where $i \in I$ (all generators are even except $e_{i}$ and $f_{i}$ for $i \in \tau$ which are odd), and the relations:
	\[ [h_{i}, h_{j}] = 0, \; [h_{i}, e_{j}] = a_{ij} e_{j}, \; [h_{i}, f_{j}] = - a_{ij} f_{j}, \]
	\[ [e_{i}, f_{j}] = \delta_{i,j} \frac{q^{g_{i} h_{i}} - q^{- g_{i} h_{i}} }{q^{g_{i}} - q^{-g_{i}} }, \]
	and the quantum Serre-type relations
	\[ e_{i}^{2} = f_{i}^{2} = 0 \; \text{for} \; i \in I \; \text{such that} \; a_{ii} = 0,  \]
	\[ [e_{i}, e_{j}] = [f_{i}, f_{j}] = 0 \; \text{for} \; i,j \in I \; \text{such that} \; a_{ij} = 0 \; \text{and} \; i \ne j, \]
	\[ \sum_{v = 0}^{1 + |a_{ij}|} (-1)^{v} \begin{bmatrix}1+|a_{ij}|\\v\end{bmatrix}_{q^{g_{i}}} e_{i}^{1 + |a_{ij}| - v} e_{j} e_{i}^{v} = \sum_{v = 0}^{1 + |a_{ij}|} (-1)^{v} \begin{bmatrix}1+|a_{ij}|\\v\end{bmatrix}_{q^{g_{i}}} f_{i}^{1 + |a_{ij}| - v} f_{j} f_{i}^{v} = 0 \]
	for $i \ne j$, $i \notin \tau$ and $i,j \in I$,
	\[ [ [ [e_{m-1}, e_{m}]_{q^{(\epsilon_{m}, \epsilon_{m})}} , e_{m+1} ]_{q^{(\epsilon_{m+1}, \epsilon_{m+1})}} , e_{m} ] = 0 \; \text{and} \]
	\[ [ [ [f_{m-1}, f_{m}]_{q^{(\epsilon_{m}, \epsilon_{m})}} , f_{m+1} ]_{q^{(\epsilon_{m+1}, \epsilon_{m+1})}} , f_{m} ] = 0,  \]
	if $m-1, m, m + 1 \in I$ and $a_{mm} = 0$, $\alpha_{m} = \epsilon_{m} - \epsilon_{m+1} \in \tau$. $[\cdot,\cdot]_{v}$ is the bilinear form defined by $[x,y]_{v} = xy - (-1)^{|x||y|} v y x$ on homogeneous $x, y$ and $v \in \Bbbk[[h]]$.
	The comultiplication, counit and antipode are given by
	\[ \Delta(h_{i}) = h_{i} \otimes 1 + 1 \otimes h_{i}, \; \Delta(e_{i}) = e_{i} \otimes 1 + q^{g_{i} h_{i}} \otimes e_{i}, \; \Delta(f_{i}) = f_{i} \otimes q^{-g_{i} h_{i}} + 1 \otimes f_{i}; \]
	\[ \epsilon(h_i) = \epsilon(e_{i}) = \epsilon(f_{i}) = 0; \; S(h_{i}) = -h_{i}, \; S(e_{i}) = - q^{-g_{i} h_{i}} e_{i}, \; S(f_{i}) = - f_{i} q^{g_{i} h_{i}}, \]
	where $i \in I$.
\end{theorem}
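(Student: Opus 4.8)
The plan is to construct $U_h^{DJ}(\mathfrak g,A,\tau)$ explicitly as the $h$-adic completion of the quotient of the free $\Bbbk[[h]]$-superalgebra on the symbols $h_i,e_i,f_i$ ($i\in I$, with $e_t,f_t$ odd for $t\in\tau$) by the closed two-sided ideal $\mathcal I$ generated by the listed relations, and then to verify, in this order, that (i) the prescribed $\Delta,\epsilon,S$ are well defined on the quotient and make it a Hopf superalgebra, (ii) it is topologically free over $\Bbbk[[h]]$, and (iii) its reduction modulo $h$ is $U(\mathfrak g)$ as a Hopf superalgebra, so that it is a QUE superalgebra quantizing $\mathfrak g$. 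As the statement is the super Drinfeld--Jimbo construction, I would transplant the even-case arguments of \cite{KT91}, indicating only where the $\mathbb Z_2$-grading and the odd generators $e_t,f_t$ force modifications, following \cite{Y94}.

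For (i) one must first show that each defining relation lies in the kernel of the corresponding map, so that $\Delta$, $\epsilon$, $S$ descend to the quotient. The Cartan relations are immediate, since $\Delta(h_i)$ is primitive and each $q^{\pm g_i h_i}$ is group-like; the relation $[e_i,f_j]=\delta_{ij}(q^{g_ih_i}-q^{-g_ih_i})/(q^{g_i}-q^{-g_i})$ is checked by a direct expansion of $\Delta$ of both sides using $\Delta(e_i)=e_i\otimes 1+q^{g_ih_i}\otimes e_i$ and $\Delta(f_j)=f_j\otimes q^{-g_jh_j}+1\otimes f_j$ and moving the group-likes across the root vectors. The delicate point is the quantum Serre-type relations: here I would use the quantum binomial identity together with a ``$q$-Leibniz rule'' for the twisted adjoint operators $\mathrm{ad}_{q}e_i$ under $\Delta$, exactly as in the even case, the only new ingredients being the signs $(-1)^{|x||y|}$ built into $[x,y]_v$ and the nilpotency $e_t^2=f_t^2=0$ for $t\in\tau$, which is precisely what makes the truncations at $1+|a_{ij}|$ and the length-four relation $[[[e_{m-1},e_m]_{q^{(\epsilon_m,\epsilon_m)}},e_{m+1}]_{q^{(\epsilon_{m+1},\epsilon_{m+1})}},e_m]$ lie in a coideal modulo $\mathcal I$. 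Coassociativity, the counit axioms, and the antipode axioms $m\circ(S\otimes\id)\circ\Delta=\eta\circ\epsilon=m\circ(\id\otimes S)\circ\Delta$ are then verified on generators, where they reduce to short identities, and propagated by multiplicativity and the (anti-)superalgebra homomorphism properties.

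For (ii) and (iii) I would establish a triangular decomposition $U_h^{DJ}\cong U^-\otimes U^0\otimes U^+$ (completed tensor products over $\Bbbk[[h]]$) as topologically free $\Bbbk[[h]]$-modules, where $U^0=\Bbbk[[h]][h_1,\dots,h_n]$ and $U^\pm$ are the subalgebras generated by the $e_i$, resp.\ the $f_i$. Exhibiting a PBW-type basis of ordered monomials in iterated $q$-brackets of root vectors --- the super analogue of the basis produced in Theorem \ref{th:pbwbasis} --- shows $U^\pm$ is a free $\Bbbk[[h]]$-module on such monomials (polynomial in the even root vectors, Grassmann in the odd ones), hence topologically free. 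Reducing modulo $h$, i.e.\ specialising $q=e^{h/2}\mapsto 1$, the relation $[e_i,f_j]=\delta_{ij}(q^{g_ih_i}-q^{-g_ih_i})/(q^{g_i}-q^{-g_i})$ becomes $[e_i,f_j]=\delta_{ij}h_i$ since that quotient is congruent to $h_i$ modulo $h$; the quantum Serre-type relations become the ``super classical Serre-type'' relations; and $\Delta$ becomes cocommutative with $h_i,e_i,f_i$ primitive. By Proposition \ref{pr:supliealgdef} the resulting Hopf superalgebra is $U(\mathfrak g(A,\tau))=U(\mathfrak g)$, which gives (iii); combined with (ii) this says $U_h^{DJ}$ is a QUE superalgebra, and by Proposition \ref{pr:QUEliebi} the induced Lie superbialgebra structure on $\mathfrak g$ is the expected one.

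The main obstacle is the pair of mutually dual facts at the heart of the construction: showing that the quantum Serre elements generate a coideal, so that $\Delta$ genuinely descends to the quotient --- the computation that pins down the precise $q$-brackets and $q$-binomial coefficients --- and, conversely, showing that imposing these relations does not collapse the algebra, i.e.\ the topological freeness of step (ii), established through the PBW basis. Once these two are in place, the Cartan relations, coassociativity, the counit and antipode axioms, and the $q\to 1$ degeneration are all routine.
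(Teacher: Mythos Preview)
The paper does not prove this theorem at all: it is introduced with ``We need the following result, see \cite{KT91} and \cite{Y94}'' and stated without proof, as a known result imported from the literature. There is therefore no ``paper's own proof'' against which to compare your proposal.

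That said, your outline is a faithful summary of how the cited references (particularly Yamane \cite{Y94}) establish the result: define the algebra by generators and relations in the $h$-adic setting, verify that the Serre-type relations generate a Hopf ideal so that $\Delta,\epsilon,S$ descend, prove topological freeness via a triangular decomposition and a PBW basis, and check the classical limit. The identification of the two genuinely hard steps --- that the quantum Serre elements span a coideal, and that the PBW monomials remain linearly independent after imposing the relations --- is accurate. One small caveat: your reference to Theorem~\ref{th:pbwbasis} is circular in spirit, since that theorem is proved for the root-of-unity quotient $U_q^d$ rather than for $U_h^{DJ}$ over $\Bbbk[[h]]$; the PBW basis you need here is the one constructed directly in \cite{Y94} for the topological algebra, not the finite-dimensional specialisation treated later in the paper.
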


\subsection{Definition of quantum superalgebra at roots of unity}

\label{subs:dqsru}

We introduce the quantum superalgebra of $sl(m|n)$ for any Dynkin diagram using notations from section\ref{sc:Wgsl} and \ref{sc:CcLs} (see \cite{BKK00}, \cite{Y99}). Let $q$ be an algebraically independent and invertible element over $\mathbb{Q}$. Consider Lie superalgebra $\mathfrak{g}(A_{d},\tau^{d})$ ($d \in D$), where $\tau^{d} = \{ \alpha_{i,d} \; | \; i \in I \}$ is the corresponding set of simple roots and $A_{d}=((\alpha_{i,d}, \alpha_{j,d}))_{i,j \in I}$. Let $\mathfrak{U}_q^d := \mathfrak{U}_q(\mathfrak{g}(A_{d},\tau^{d}))$ for any $d \in D$ be the associative superalgebra over $\mathbb{Q}(q)$ with $1$, generated by $\{ e_{i,d}, f_{i,d}, k_{i,d}, k_{i,d}^{-1} \; | \; i \in I \}$, satisfying
\begin{equation}
	\label{eq:QUSrelf}
	XY = YX \; \text{for} \; X,Y \in \{ k_{i,d}, k_{i,d}^{-1} \; | \; i \in I \},
\end{equation}
\begin{equation}
	k_{i,d} k_{i,d}^{-1} = k_{i,d}^{-1} k_{i,d} = 1, \; e_{i,d} k_{j,d} = q^{ - (\alpha_{j}^{d}, \alpha_{i}^{d})} k_{j,d} e_{i,d}, \; k_{j,d} f_{i,d} = q^{- (\alpha_{j}^{d}, \alpha_{i}^{d})} f_{i,d} k_{j,d},
\end{equation}
\begin{equation}
	[e_{i,d},f_{j,d}]_{1} = e_{i,d} f_{j,d} - (-1)^{|\alpha_{i}^{d}| |\alpha_{j}^{d}|} f_{j,d} e_{i,d} = \delta_{i,j} \frac{k_{i,d} - k_{i,d}^{-1}}{q-q^{-1}},
\end{equation}
\begin{equation}
e_{i,d}^{2} = f_{i,d}^{2} = 0, \; \text{if} \; |\alpha_{i,d}| = 1,
\end{equation}
\begin{equation}
[e_{i,d},[e_{i,d},e_{j,d}]_{q^{(\alpha_{i,d}, \alpha_{j,d})}}]_{q^{(\alpha_{i,d}, \alpha_{i,d} + \alpha_{j,d})}} = 0, \mbox{ if } |\alpha_{i,d}| = 0, \; |i-j|=1,
\end{equation}
\begin{equation}
\label{eq:QUSrels}
[f_{i,d},[f_{i,d},f_{j,d}]_{q^{(\alpha_{i,d}, \alpha_{j,d})}}]_{q^{(\alpha_{i,d}, \alpha_{i,d} + \alpha_{j,d})}} = 0, \mbox{ if } |\alpha_{i,d}| = 0, \; |i-j|=1,
\end{equation}
\begin{equation}
	[ [ [e_{i-1,d}, e_{i,d}]_{q^{(\epsilon_{i}, \epsilon_{i})}} , e_{i+1,d} ]_{q^{(\epsilon_{i+1}, \epsilon_{i+1})}} , e_{i,d} ] = 0, \mbox{ if } |\alpha_{i,d}= \epsilon_{i} - \epsilon_{i+1}| = 1,
\end{equation}
\begin{equation}
	[ [ [f_{i-1,d}, f_{i,d}]_{q^{(\epsilon_{i}, \epsilon_{i})}} , f_{i+1,d} ]_{q^{(\epsilon_{i+1}, \epsilon_{i+1})}} , f_{i,d} ] = 0, \mbox{ if } |\alpha_{i,d}= \epsilon_{i} - \epsilon_{i+1}| = 1,
\end{equation}
where $\delta_{i,j}$ is the Kronecker delta, $\alpha_{i,d}, \alpha_{j,d} \in \tau^{d}$ ($i, j \in I$); $[\cdot,\cdot]_{v}$ is the bilinear form defined by
\begin{equation}
[x,y]_{v} = xy - (-1)^{|x||y|} v y x
\end{equation}
on homogeneous $x, y$ and $v \in \mathbb{Q}(q)$. The parity function is defined by $|k_{i,d}| = 0$ and $|e_{i, d}| = |f_{i,d}| = |\alpha_{i,d}|$, where $\alpha_{i,d} \in \tau^{d}$ ($i \in I$).

Also $\mathfrak{U}_q^d(\mathfrak{g})$ is a Hopf superalgebra which comultiplication $\Delta$, counit $\epsilon$ and antipode $S$ are
\begin{equation}
	\label{eq:standartHsupalgstrf}
	\Delta_{d}(k_{i,d}) = k_{i,d} \otimes k_{i,d}, \; \Delta_{d}(e_{i,d}) = e_{i,d} \otimes 1 + k_{i,d} \otimes e_{i,d}, \; \Delta_{d}(f_{i,d}) = f_{i,d} \otimes k_{i,d}^{-1} + 1 \otimes f_{i,d};
\end{equation}
\begin{equation}
	\label{eq:standartHsupalgstrs}
	\epsilon_{d}(k_{i,d}) = 1, \; \epsilon_{d}(e_{i,d}) = \epsilon_{d}(f_{i,d}) = 0; \; S_{d}(k_{i,d}^{\pm 1}) = k_{i,d}^{\mp 1}, \; S_{d}(e_{i,d}) = - k_{i,d}^{-1} e_{i,d}, \; S_{d}(f_{i,d}) = - f_{i,d} k_{i,d},
\end{equation}
where $i \in I$.

\begin{proposition}
	\label{pr:injqsaque}
	There exists the unique injective morphism of Hopf superalgebras for $d \in D$
	\[ f: \mathfrak{U}_q(\mathfrak{g}(A_{d},\tau^{d})) \to U_{h}^{DJ}(\mathfrak{g}, A_d, \tau), \]
	where $\tau = \{ \alpha \; | \; |\alpha| = 1, \; \alpha \in \tau^{d} \}$, such that for $i \in I$
	\[ f(q) = e^{\frac{h}{2}}, \; f(k_{i}) = e^{\frac{h h_{i}}{2}}, \; f(k_{i}^{-1}) = e^{-\frac{h h_{i}}{2}}, \; f(e_{i}) = e_{i}, \; f(f_{i}) = f_{i}. \]
\end{proposition}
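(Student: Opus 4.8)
The plan is to build $f$ on the generators of $\mathfrak{U}_q(\mathfrak{g}(A_d,\tau^d))$, verify it respects the defining relations \eqref{eq:QUSrelf}--\eqref{eq:QUSrels} (together with the two remaining super-Serre relations) using the explicit Drinfeld--Jimbo presentation of $U_h^{DJ}$ recalled above, then check that it intertwines the Hopf structures, and finally establish injectivity. For the setup, I would first note that $e^{h/2}$ is transcendental over $\mathbb{Q}$ (if $\sum_k c_k e^{kh/2}=0$ then $\sum_k c_k k^n=0$ for all $n\ge 0$, forcing $c_k=0$ by Vandermonde), so $q\mapsto e^{h/2}$ gives a well-defined embedding $\mathbb{Q}(q)\hookrightarrow\Bbbk((h))$, along which I set $f(k_{i,d}^{\pm 1})=e^{\pm h h_i/2}$, $f(e_{i,d})=e_i$, $f(f_{i,d})=f_i$. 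Here $e^{h h_i/2}=q^{h_i}$ is the group-like element of $U_h^{DJ}$, and although $(q-q^{-1})^{-1}$ is not integral at $h=0$, the combination $\tfrac{k_{i,d}-k_{i,d}^{-1}}{q-q^{-1}}$ does land in $U_h^{DJ}$ (see below), so on the natural $\mathbb{Q}[q^{\pm1}]$-subalgebra $f$ even lands in $U_h^{DJ}$ itself.

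Checking the relations is bookkeeping once one uses that here $A_d=((\alpha_i^d,\alpha_j^d))_{i,j\in I}$ is symmetric, so the symmetrizing coefficients may all be taken equal to $1$. Commutativity of the $k$'s is $[h_i,h_j]=0$; the relation $e_{i,d}k_{j,d}=q^{-(\alpha_j^d,\alpha_i^d)}k_{j,d}e_{i,d}$ follows by exponentiating $\mathrm{ad}(h_j)(e_i)=(\alpha_j^d,\alpha_i^d)e_i$; and the quantum Serre and super-Serre relations match term by term after expanding the brackets $[\cdot,\cdot]_v$ and identifying the $q$-binomials with $q^{g_i}=q$. The one relation needing a word is $[e_{i,d},f_{j,d}]_1=\delta_{i,j}\tfrac{k_{i,d}-k_{i,d}^{-1}}{q-q^{-1}}$: in $U_h^{DJ}$ one has $e^{h h_i/2}-e^{-h h_i/2}=h h_i(1+O(h^2))$ and $e^{h/2}-e^{-h/2}=h(1+O(h^2))$, each being $h$ times a unit of $\Bbbk[[h]]$, so their quotient is the honest element $h_i(1+O(h^2))\in U_h^{DJ}$, which is precisely $[e_i,f_i]$ there. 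This yields an algebra homomorphism $f$ compatible with the base embedding of the previous paragraph.

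For the coalgebra structure one compares \eqref{eq:standartHsupalgstrf}--\eqref{eq:standartHsupalgstrs} with the formulas for $U_h^{DJ}$: $\Delta(f(e_{i,d}))=e_i\otimes 1+e^{h h_i/2}\otimes e_i=(f\otimes f)(\Delta_d(e_{i,d}))$, and likewise for $f_{i,d}$ and $k_{i,d}$; $\epsilon(e^{h h_i/2})=e^{h\epsilon(h_i)/2}=1$; $S(e^{h h_i/2})=e^{-h h_i/2}$, $S(e_i)=-e^{-h h_i/2}e_i$, $S(f_i)=-f_i e^{h h_i/2}$. Hence $f$ is a morphism of Hopf superalgebras. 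Uniqueness is immediate: $\mathfrak{U}_q(\mathfrak{g}(A_d,\tau^d))$ is generated over $\mathbb{Q}(q)$ by $e_{i,d},f_{i,d},k_{i,d}^{\pm1}$, and $f$ is prescribed on all of these (and on $q$).

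The main obstacle is injectivity. I would use the triangular decomposition $\mathfrak{U}_q^d\cong\mathfrak{U}_q^{d,-}\otimes\mathfrak{U}_q^{d,0}\otimes\mathfrak{U}_q^{d,+}$ with $\mathfrak{U}_q^{d,0}=\mathbb{Q}(q)[k_{i,d}^{\pm1}]$, the matching topological triangular decomposition of $U_h^{DJ}$, and the fact that $f$ respects the three factors, so it suffices to prove injectivity on each. On the Cartan part: the monomials $\prod_i e^{m_i h h_i/2}$ ($m\in\mathbb{Z}^I$) are $\Bbbk((h))$-linearly independent in $U(\mathfrak{h})[[h]]\otimes_{\Bbbk[[h]]}\Bbbk((h))$, since the $h_i$ are algebraically independent in $U(\mathfrak{h})$ and $e^{h/2}$ is transcendental over $\mathbb{Q}$. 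On the nilpotent parts: $\mathfrak{U}_q^{d,\pm}$ is presented over $\mathbb{Q}(q)$ by the $e_{i,d}$ (resp. $f_{i,d}$) subject only to the quantum (super-)Serre relations, while the $\pm$-part of $U_h^{DJ}$ has the same presentation with $q$ replaced by $e^{h/2}$; $f$ carries the Lusztig-type root vectors building the PBW basis of $\mathfrak{U}_q^{d,\pm}$ (Theorem \ref{th:pbwbasis}) to the corresponding root vectors of $U_h^{DJ}$, and one checks that the PBW monomials are sent to a $\Bbbk((h))$-linearly independent subset of the $\pm$-part of $U_h^{DJ}$ (whose PBW monomials form a topological $\Bbbk[[h]]$-basis). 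Combining the three factors with $\mathbb{Q}(q)\hookrightarrow\Bbbk((h))$ gives injectivity of $f$. An alternative that sidesteps the forward reference to Theorem \ref{th:pbwbasis}: restrict $f$ to the $\mathbb{Q}[q^{\pm1}]$-integral forms, observe that both forms are free over their respective base rings and specialize at $q=1$, resp. $h=0$, to the identity of $U(\mathfrak{g}(A_d,\tau^d))$, and conclude by a Nakayama-type leading-order argument, as is done for the analogous non-super and super statements in \cite{Y91,Y94}.
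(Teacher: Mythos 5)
Your proposal is correct and takes essentially the approach the paper intends: the paper's own proof is just the phrase ``direct computations'', i.e.\ defining $f$ on generators and verifying the defining relations and the Hopf structure maps, which is exactly what you carry out. Your extra care about the base-ring issue ($\mathbb{Q}(q)$ versus $\Bbbk[[h]]$, with $q\mapsto e^{h/2}$ transcendental) and the explicit injectivity argument via the triangular decomposition and PBW monomials (or, alternatively, integral forms as in \cite{Y91}, \cite{Y94}) are sound and merely supply the details the paper leaves implicit.
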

\begin{proof}
	The result follows from the direct computations.
\end{proof}

Fix Hopf superalgebra $\mathfrak{U}_q(\mathfrak{g}(A_{d},\tau^{d}))$ for $d \in D$. It follows from Proposition \ref{pr:injqsaque} that we can consider $\mathfrak{U}_q^d$ as a supersubalgebra in $U_{h}^{DJ}(\mathfrak{g}, A_d, \tau)$. Thus we are able to apply equation \eqref{eq:squebialgsupcon} to $\mathfrak{U}_q^d$. Then it easy to see that the Lie superalgebra $\mathfrak{g}(A_{d},\tau^{d})$ has a natural structure of a Lie superbialgebra defined by equation \eqref{eq:LSBstr} and extended to all the elements of $\mathfrak{g}(A_{d},\tau^{d})$ using \eqref{eq:bisuli2}.

From now on let $q$ be a root of unity of odd order $p$. Then it is easy to see that $\mathfrak{U}_q^d$ can be defined in the same way. Now we introduce some auxiliary notations, see \cite{T19a}.

\begin{notation}
\label{nt:qan}
\normalfont

Let $(\mathfrak{U}_q^d)^{<}$, $(\mathfrak{U}_q^d)^{>}$, $(\mathfrak{U}_q^d)^{0}$ be the $\mathbb{Q}(q)$-subalgebras of $\mathfrak{U}_q^d$ generated respectively by $\{f_{i,d} \}_{i \in I}$, $\{ e_{i,d} \}_{i \in I}$, $\{ k_{i,d} \}_{i \in I}$.

Let $\tau^{d} = \{ \alpha_{i,d} \; | \; i \in I \}$ \eqref{eq:rootdecomp} be the set of simple roots associated with Dynkin diagram labeled by $d \in D$ and $\Delta^{+}_d = \{ \alpha_{j,d} + \alpha_{j+1,d} + ... + \alpha_{i,d} \}_{1 \le j \le i<n+m}$. Consider the following total order $"\le"$ on $\Delta^{+}_d$:
\begin{equation}
	\label{eq:orderRootquant}
	\alpha_{j,d} + \alpha_{j+1,d} + ... + \alpha_{i,d} \le \alpha_{j^{'},d} + \alpha_{j^{'}+1,d} + ... + \alpha_{i^{'},d} \mbox{ iff } j < j^{'} \mbox{ or } j = j^{'}, i \le i^{'}.
\end{equation}
For every $\beta \in \Delta^{+}_d$, we choose a decomposition $\beta = \alpha_{i_1,d}+...+\alpha_{i_p,d}$ such that $[...[e_{\alpha_{i_1,d}}, e_{\alpha_{i_2,d}},...,e_{\alpha_{i_p,d}}]$ is a non-zero root vector $e_{\beta,d}$ of $\mathfrak{g}(A_d, \tau^d)$ (here, $e_{\alpha_{i,d},d} = e_{i,d}$ denotes the standart Chevalley generator of $\mathfrak{g}(A_d, \tau^d)$). Then, we define the $PBW$ basis element $e_{\beta,d} \in (\mathfrak{U}_q^d)^{>}$ and $f_{\beta,d} \in (\mathfrak{U}_q^d)^{<}$ via
\[ e_{\beta,d} := [...[ [ e_{i_1,d}, e_{i_2,d} ]_{q^{(\alpha_{i_1,d}, \alpha_{i_2,d})}}, e_{i_3,d} ]_{q^{( \alpha_{i_1,d} + \alpha_{i_2,d}, \alpha_{i_3,d})}} , ... , e_{i_p,d} ]_{q^{(\alpha_{i_1,d} + \alpha_{i_2,d} + ... + \alpha_{i_{p-1},d}, \alpha_{i_{p},d})}}, \]
\[ f_{\beta,d} := [...[ [ f_{i_p,d}, f_{i_{p-1},d} ]_{q^{-(\alpha_{i_p,d}, \alpha_{i_{p-1},d})}}, f_{i_{p-2},d} ]_{q^{-( \alpha_{i_p,d} + \alpha_{i_{p-1},d}, \alpha_{i_{p-2},d})}} , ... , f_{i_1,d} ]_{q^{-(\alpha_{i_{p},d} + \alpha_{i_{p-1},d} + ... + \alpha_{i_{2},d}, \alpha_{i_{1},d})}}. \]
In particular, $e_{\alpha_{i,d},d} = e_{i,d}$ and $f_{\alpha_{i,d},d} = f_{i,d}$. Let $H$ denote the set of all functions $h: \tau^{d} \to \{0,1,...,p-1\}$ such that $h(\alpha) \le 1$ if $|\alpha|=1$ ($\alpha \in \Delta^{+}_d$). The monomials
\begin{equation}
	e_{h,d} :=  \overrightarrow{\prod_{\beta \in \Delta^{+}_d}} e_{\beta,d}^{h(\beta)}, \; f_{h,d} := \overleftarrow{\prod_{\beta \in \Delta^{+}_d}} f_{\beta,d}^{h(\beta)}, \; \forall h \in H
\end{equation}
will be called the ordered PBW monomials of $(\mathfrak{U}_q^d)^{>}$ and $(\mathfrak{U}_q^d)^{<}$. Here, the arrows $\rightarrow$ an $\leftarrow$ over the product signs refer to the total order \eqref{eq:orderRootquant} and its opposite, respectively.

Let $H_{0}$ denote the set of all functions $g: I \to \{0,1,...,p-1\}$. The monomials (note the order of the products is irrelevant, due to \eqref{eq:QUSrelf})
\begin{equation}
	k_{g,d} := \prod_{i \in I} k_{i,d}^{g(i)}, \; \forall g \in H_{0}.
\end{equation}
will be called the PBW monomials of $(\mathfrak{U}_q^d)^{0}$.

Thus define a total order $"\le"$ on the elements $\{ e_{\alpha,d}, f_{\alpha,d}, k_{i,d}^{\pm 1} \; | \; \alpha \in \Delta^{+}_{d}, i \in I \} \subset \mathfrak{U}_q^d$: set $k_{i,d} \le k_{j,d}$, if $i \le j$ ($i,j \in I$); $e_{\alpha,d} \le e_{\beta,d}$ and $f_{\alpha,d} \le f_{\beta,d}$, if $\alpha \le \beta$ ($\alpha, \beta \in \Delta^{+}_d$); $f_{\alpha,d} < k_{i,d} < e_{\beta,d}$ ($\alpha, \beta \in \Delta^{+}_{d}$, $i \in I$).

For all $n \in \mathbb{Z}$ set $[n]:=\frac{q^n-q^{-n}}{q-q^{-1}}$. Denote by
\[ exp_{q}(x) := \sum_{n=0}^{\infty} \frac{x^{n}}{(n)_{q}!}, \]
where $x$ is an indeterminate and for all $k \in \mathbb{N}$ we set $(k)_{q} := \frac{q^{k}-1}{q-1}$ and $(0)_{q}!:=1$, $(n)_{q}!:=(1)_{q} (2)_{q} ... (n)_{q},$ if $n \in \mathbb{Z}_{+}$.
\end{notation}

Now we define the quantum superalgebras $U_q^d := U_q(\mathfrak{g}(A_{d},\tau^{d}))$ of $sl(m|n)$ at roots of unity for the Dynkin diagrams labeled by $d \in D$, see also \cite{MS19}, Proposition 3.1.

\begin{definition}
	\label{def:Hopidealsup}
	\normalfont
	For any $d \in D$ let $U_q^d$ be the quotient of the Hopf superalgebra $\mathfrak{U}_q^d$ by the two-sided $\mathbb{Z}_{2}$-graded Hopf ideal $\mathfrak{I}$ generated by the following elements:
	\begin{equation}
		\label{eq:QUSrelt}
		e_{i,d}^p, f_{i,d}^p,
	\end{equation}
	for all $|\alpha_{i,d}| = 0$ ($\alpha_{i,d} \in \tau^d$, $i \in I$);
	\begin{equation}
		\label{eq:QUSrelfo}
		k_{i,d}^{p} - 1
	\end{equation}
	for all $i \in I$.
\end{definition}

For convenience we preserve the same notations for $U_q^d$ as for $\mathfrak{U}_q^d$ ($d \in D$). Notice that Proposition \ref{pr:QUEliebi} is not true for $U_q^d$, as when we specialize to a root of unity $q$ then the equation \eqref{eq:squebialgsupcon} doesn't hold.

\subsection{Category of Hopf superalgebras and twists}
\label{sec:chst}

We consider some categorical definitions and general results about Hopf superalgebras. Our notations here follow \cite{AM14}.

\begin{definition}
	\normalfont
	\begin{enumerate}
	\item Let $\text{Alg}$ be the strict monoidal category of algebras over the field $\mathbb{Q}(q)$. 
		
	\item Let $\text{sAlg}$ be the strict monoidal supercategory (\cite{BE17a}, Definition 1.4) of unital associative superalgebras over the field $\mathbb{Q}(q)$. A morphism $f \in \text{Hom}_{\text{sAlg}}(V,W)$ between superalgebras $(V,\mu_V,\eta_V)$ and $(W,\mu_W,\eta_W)$ is a linear map of the underlying vector spaces such that $ f \circ \mu_V = \mu_W \circ (f \otimes f)$ and $f \circ \eta_V = \eta_W$.
	
	\item Let $\text{HAlg}$ be the strict monoidal category of Hopf algebras over the field $\mathbb{Q}(q)$.
	
	\item Let $\text{sHAlg}$ be the strict monoidal supercategory of Hopf superalgebras over the field $\mathbb{Q}(q)$. A morphism $f \in \text{Hom}_{\text{sHAlg}}(V,W)$ between Hopf superalgebras $(V,\mu_{V},\eta_{V},\Delta_{V},\epsilon_{V},S_{V})$ and $(W,\mu_{W},\eta_{W},\Delta_{W},\epsilon_{W},S_{W})$ is a linear map of the underlying vector spaces such that $ f \circ \mu_V = \mu_W \circ (f \otimes f)$, $f \circ \eta_V = \eta_W$, $(f \otimes f) \circ \Delta_V = \Delta_W \circ f$, $\epsilon_W \circ f = \epsilon_V$ and $f \circ S_V = S_W \circ f$.
	\end{enumerate}
\end{definition}

Let $(H,\mu,\eta,\Delta,\epsilon,S)$ be a Hopf superalgebra in $\underline{\text{sHAlg}}$. Recall some results about twists, see \cite{D90}, \cite{KT94}, \cite{AEG01}, \cite{XZ17}.

\begin{definition}
	\normalfont
	\label{def:twisttype1}
	 A twist for $H$ is an invertible even element $\mathcal{J} \in H \otimes H$ which satisfies
	 \begin{equation}
	 	\label{eq:twistdefclf}
	 	( \Delta \otimes id_{H}) (\mathcal{J}) (\mathcal{J} \otimes 1) = (id_{H} \otimes \Delta)(\mathcal{J}) (1 \otimes \mathcal{J}),
	 \end{equation}
	 \begin{equation}
	 	\label{eq:twistdefcls}
	 	(\epsilon \otimes id_{H}) (\mathcal{J}) = (id_{H} \otimes \epsilon) (\mathcal{J}) = 1,
	 \end{equation}
	where $id_{H}$ is the identity map of $H$.
\end{definition}

\begin{proposition}
	\label{pr:twistclasHopfsupalg}
	Let $(H,\mu,\eta,\Delta,\epsilon,S)$ be a Hopf (super)algebra in $\underline{\text{HAlg}}$ ($\underline{\text{sHAlg}}$) and let $\mathcal{J}$ be a twist for $H$. Then there is a new Hopf (super)algebra $H^{\mathcal{J}} := (H,\mu,\eta,\Delta^{\mathcal{J}}, \epsilon, S^{\mathcal{J}})$ defined by the same (super)algebra and counit, and
	\[ \Delta^{\mathcal{J}}(h) := \mathcal{J}^{-1} ( \Delta(h) ) \mathcal{J}, \; S^{\mathcal{J}}(h) := U^{-1} (S(h)) U \]
	for all $h \in H$. Here $U= \mu \circ (S \otimes id_{H}) (\mathcal{J})$ and is invertible. Moreover, $U^{-1} = \mu \circ (id_{H} \otimes S) (\mathcal{J}^{-1} )$. If $H$ is a quasi-cocommutative (braided) Hopf (super)algebra with an universal $R$-matrix $R$ then $H^{\mathcal{J}}$ is also quasi-cocommutative (braided) with the universal $R$-matrix $R^{\mathcal{J}}$:
	\[ R^{\mathcal{J}} := \tau_{H, H} (\mathcal{J}^{-1}) R_{H} \mathcal{J}, \]
	where $\tau_{H, H}$ is defined by \eqref{eq:taudef}.
\end{proposition}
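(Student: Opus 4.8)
The plan is to verify the Hopf superalgebra axioms for $H^{\mathcal{J}}$ one at a time, following Drinfeld's argument in the non-super case \cite{D90} and inserting the Koszul signs that appear wherever $\tau_{H,H}$ (see \eqref{eq:taudef}) intervenes; the only inputs used are that $\mathcal{J}$ is \emph{even} and invertible, the $2$-cocycle condition \eqref{eq:twistdefclf}, and the normalisation \eqref{eq:twistdefcls}. Since the algebra structure $(\mu,\eta)$ and the counit $\epsilon$ are unchanged, only the statements about $\Delta^{\mathcal{J}}$ and $S^{\mathcal{J}}$ need proof. First I would observe that conjugation by the even invertible element $\mathcal{J}$ is a superalgebra automorphism of $H\otimes H$, so $\Delta^{\mathcal{J}} = (\mathcal{J}^{-1}(\,\cdot\,)\mathcal{J})\circ\Delta$ is an even morphism of superalgebras; it is counital by \eqref{eq:twistdefcls}, which applied to $\mathcal{J}^{-1}$ also yields $(\epsilon\otimes\id_H)(\mathcal{J}^{-1}) = (\id_H\otimes\epsilon)(\mathcal{J}^{-1}) = 1$. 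For coassociativity I would expand, using that $\Delta^{\mathcal{J}}$ is an algebra map,
\[ (\Delta^{\mathcal{J}}\otimes\id_H)\circ\Delta^{\mathcal{J}}(h) = (\mathcal{J}^{-1}\otimes 1)\,(\Delta\otimes\id_H)(\mathcal{J}^{-1})\,\bigl((\Delta\otimes\id_H)\Delta(h)\bigr)\,(\Delta\otimes\id_H)(\mathcal{J})\,(\mathcal{J}\otimes 1), \]
together with the mirror expression for $(\id_H\otimes\Delta^{\mathcal{J}})\circ\Delta^{\mathcal{J}}$, and then conclude they agree by coassociativity of $\Delta$ and by \eqref{eq:twistdefclf} along with its inverse form $(\mathcal{J}^{-1}\otimes 1)(\Delta\otimes\id_H)(\mathcal{J}^{-1}) = (1\otimes\mathcal{J}^{-1})(\id_H\otimes\Delta)(\mathcal{J}^{-1})$ (obtained by inverting \eqref{eq:twistdefclf} and using that $\Delta\otimes\id_H$ is an algebra map). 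This makes $H^{\mathcal{J}}$ a bialgebra object.

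Next I would treat the antipode. The first step is to show $U := \mu\circ(S\otimes\id_H)(\mathcal{J})$ is invertible with $U^{-1} = \mu\circ(\id_H\otimes S)(\mathcal{J}^{-1})$: writing $\mathcal{J} = \sum J_i\otimes J^i$ and $\mathcal{J}^{-1} = \sum \tilde J_k\otimes\tilde J^k$ in homogeneous Sweedler notation — so $|J_i| = |J^i|$ and $|\tilde J_k| = |\tilde J^k|$, since $\mathcal{J},\mathcal{J}^{-1}$ are even — the relations $UU^{-1} = 1 = U^{-1}U$ follow from \eqref{eq:twistdefclf}, \eqref{eq:twistdefcls}, and the two antipode axioms $\mu\circ(S\otimes\id_H)\circ\Delta = \eta\circ\epsilon = \mu\circ(\id_H\otimes S)\circ\Delta$ of $H$. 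The second step is to verify
\[ \mu\circ(S^{\mathcal{J}}\otimes\id_H)\circ\Delta^{\mathcal{J}} = \eta\circ\epsilon = \mu\circ(\id_H\otimes S^{\mathcal{J}})\circ\Delta^{\mathcal{J}}; \]
here I would expand $\Delta^{\mathcal{J}}(h) = \mathcal{J}^{-1}\Delta(h)\mathcal{J}$, use that $S$ is a graded anti-automorphism to move it through the products, and collapse the result using \eqref{eq:twistdefclf} and the antipode/counit axioms of $H$. I expect this antipode computation to be the main obstacle: the difficulty is purely bookkeeping — keeping the Koszul signs straight and massaging \eqref{eq:twistdefclf} into exactly the shape needed to cancel the stray $\mathcal{J}^{\pm 1}$ factors — but it is the only place where the super setting genuinely complicates the argument.

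Finally, for the triangular structure, a short direct check shows $\tau_{H,H}$ is a superalgebra automorphism of $H\otimes H$, so with $\Delta^{op} = \tau_{H,H}\circ\Delta$ one has $\tau_{H,H}\circ\Delta^{\mathcal{J}}(h) = \tau_{H,H}(\mathcal{J}^{-1})\,\Delta^{op}(h)\,\tau_{H,H}(\mathcal{J})$ for all $h\in H$. Assuming $R$ is quasi-cocommutative, i.e. $\Delta^{op}(h) = R\,\Delta(h)\,R^{-1}$, and setting $R^{\mathcal{J}} := \tau_{H,H}(\mathcal{J}^{-1})\,R\,\mathcal{J}$, the telescoping
\[ R^{\mathcal{J}}\,\Delta^{\mathcal{J}}(h)\,(R^{\mathcal{J}})^{-1} = \tau_{H,H}(\mathcal{J}^{-1})\,R\,\Delta(h)\,R^{-1}\,\tau_{H,H}(\mathcal{J}) = \tau_{H,H}(\mathcal{J}^{-1})\,\Delta^{op}(h)\,\tau_{H,H}(\mathcal{J}) \]
equals $(\tau_{H,H}\circ\Delta^{\mathcal{J}})(h)$, so $H^{\mathcal{J}}$ is quasi-cocommutative with $R$-matrix $R^{\mathcal{J}}$. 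If moreover $R$ obeys the hexagon identities $(\Delta\otimes\id_H)(R) = R_{13}R_{23}$ and $(\id_H\otimes\Delta)(R) = R_{13}R_{12}$ (usual leg notation), then substituting the definitions of $\Delta^{\mathcal{J}}$ and $R^{\mathcal{J}}$ and inserting $\mathcal{J}\mathcal{J}^{-1} = 1$ in the appropriate legs reduces the hexagon identities for $R^{\mathcal{J}}$ to \eqref{eq:twistdefclf}; this is a routine, if lengthy, computation in $H^{\otimes 3}$. Hence $H^{\mathcal{J}}$ is braided with universal $R$-matrix $R^{\mathcal{J}}$, completing the argument; the non-super case is the special case with all parities trivial, i.e. Drinfeld's theorem.
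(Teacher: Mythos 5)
Your proposal is correct and takes essentially the same route as the paper, whose proof is simply the assertion that the statement follows by direct verification from the definitions — i.e.\ Drinfeld's twist argument (\cite{D90}) redone with Koszul signs, which is exactly the check you outline, with the right inputs (evenness and invertibility of $\mathcal{J}$, \eqref{eq:twistdefclf}, \eqref{eq:twistdefcls}) and the correct identification $U^{-1}=\mu\circ(\mathrm{id}_H\otimes S)(\mathcal{J}^{-1})$. The only small imprecision is in the last step: the hexagon identities for $R^{\mathcal{J}}$ do not reduce to \eqref{eq:twistdefclf} alone but also require the intertwining property $R\,\Delta(h)=(\tau_{H,H}\circ\Delta)(h)\,R$ applied to the legs of $\mathcal{J}$ (together with the hexagons for $R$), all of which are among your stated hypotheses, so the argument goes through.
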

\begin{proof}
	The result follows from the definition and properties of a comultiplication, antipode and universal $R$-matrix.
\end{proof}

\begin{definition} \label{twist1_2}
a) The Hopf (super)algebra
\begin{equation}
	\label{eq:hstwistnotf}
	H^{\mathcal{J}} := (H,\mu,\eta,\Delta^{\mathcal{J}}, \epsilon, S^{\mathcal{J}})
\end{equation}
is called the twisted Hopf (super)algebra by the twist $\mathcal{J}$. The same notation we use for the quasi-cocommutative (braided) Hopf (super)algebra $H^{\mathcal{J}} := (H,\mu,\eta,\Delta^{\mathcal{J}}, \epsilon, S^{\mathcal{J}}, R^{\mathcal{J}})$.\\ 
b) We call $\mathcal{J}$ the twist of type $1$.\\
c) Let $\chi \in \text{Hom}_{\text{Alg}}(V,W)$ ($\text{Hom}_{\underline{\text{sAlg}}}(V,W)$ be an isomorphism. Then we call $\chi$ the twist of type $2$.
\end{definition}

\begin{proposition}
	\label{pr:nsHstr}
	Let $\chi \in \text{Hom}_{\text{Alg}}(V,W)$ ($\text{Hom}_{\underline{\text{sAlg}}}(V,W)$) be a twist of type $2$. Suppose that $V$ and $W$ are Hopf (super)algebras, namely
	\[(V,\mu_{V},\eta_{V},\Delta_{V},\epsilon_{V},S_{V}) \text{ and } (W,\mu_{W},\eta_{W},\Delta_{W},\epsilon_{W},S_{W}).\]
	Let for any $w \in W$
	\[ \Delta_{W}^{\chi}(w) := (\chi \otimes \chi) \circ \Delta_{V}(\chi^{-1}(w)), \; \epsilon_{W}^{\chi}(w) := \epsilon_{V} \circ \chi^{-1}(w), \; S^{\chi}_{W}(w) := \chi \circ S_{V}(\chi^{-1}(w)). \]
	Then $V^{\chi} := (W, \mu_{W}, \eta_{W}, \Delta_{W}^{\chi}, \epsilon_{W}^{\chi}, S^{\chi}_{W})$ is a Hopf (super)algebra isomorphic to $V$. If $V$ is a quasi-cocommutative (braided) Hopf (super)algebra with an universal $R$-matrix $R_{V}$ then $W$ is also quasi-cocommutative (braided) with the universal $R$-matrix $R_{W}^{\chi}$:
	\[ R_{W}^{\chi} = (\chi \otimes \chi)(R_{V}). \]
\end{proposition}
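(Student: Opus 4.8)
The plan is to prove everything by \emph{transport of structure} along the isomorphism $\chi$. First I would observe that, directly from the definitions of $\Delta_W^\chi$, $\epsilon_W^\chi$ and $S_W^\chi$, the map $\chi$ is an isomorphism of Hopf (super)algebras from $V$ onto $V^\chi$. Indeed $\chi$ is an algebra isomorphism by hypothesis ($\chi\circ\mu_V=\mu_W\circ(\chi\otimes\chi)$, $\chi\circ\eta_V=\eta_W$), and substituting $w=\chi(v)$ into the defining formulas gives $\Delta_W^\chi(\chi(v))=(\chi\otimes\chi)\Delta_V(v)$, $\epsilon_W^\chi(\chi(v))=\epsilon_V(v)$ and $S_W^\chi(\chi(v))=\chi(S_V(v))$, so $\chi$ intertwines all the structure maps of $V$ with those of $V^\chi$. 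In the super case one uses here that $\chi$ is \emph{even} (a morphism of $\underline{\text{sAlg}}$), so that each $\chi^{\otimes n}$ is again a morphism of superalgebras for the graded tensor product and, crucially, $(\chi\otimes\chi)\circ\tau_{V,V}=\tau_{W,W}\circ(\chi\otimes\chi)$; in the non-super case this is automatic.

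Next I would deduce that $V^\chi$ really is a Hopf (super)algebra. Each Hopf axiom --- $\Delta$ is an algebra morphism, coassociativity $(\Delta\otimes\id)\circ\Delta=(\id\otimes\Delta)\circ\Delta$, the counit identities $(\epsilon\otimes\id)\circ\Delta=\id=(\id\otimes\epsilon)\circ\Delta$, and the antipode identity $\mu\circ(S\otimes\id)\circ\Delta=\eta\circ\epsilon=\mu\circ(\id\otimes S)\circ\Delta$ --- is an equation between composites of structure maps. Since these hold in $V$, and since $\chi$ together with the bijective algebra morphisms $\chi^{\otimes 2}$, $\chi^{\otimes 3}$ (compatible with the symmetry by the evenness remark) carries every structure map of $V$ to the corresponding one of $V^\chi$, each axiom for $V^\chi$ is obtained from the axiom for $V$ by conjugating with the appropriate power of $\chi$. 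Hence $V^\chi=(W,\mu_W,\eta_W,\Delta_W^\chi,\epsilon_W^\chi,S_W^\chi)$ is a Hopf (super)algebra and $\chi\colon V\to V^\chi$ is an isomorphism of such.

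Finally, for the triangular part, recall that a universal $R$-matrix $R_V$ for $V$ is an invertible even element of $V\otimes V$ satisfying $\tau_{V,V}\circ\Delta_V(v)=R_V\,\Delta_V(v)\,R_V^{-1}$ for all $v\in V$ together with the hexagon identities $(\Delta_V\otimes\id)(R_V)=(R_V)_{13}(R_V)_{23}$ and $(\id\otimes\Delta_V)(R_V)=(R_V)_{13}(R_V)_{12}$. Set $R_W^\chi:=(\chi\otimes\chi)(R_V)$; it is invertible with inverse $(\chi\otimes\chi)(R_V^{-1})$ since $\chi\otimes\chi$ is an algebra isomorphism, and it is even since $\chi$ is even. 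Applying $\chi\otimes\chi$ to the quasi-cocommutativity relation and using $(\chi\otimes\chi)\tau_{V,V}=\tau_{W,W}(\chi\otimes\chi)$ together with $(\chi\otimes\chi)\Delta_V=\Delta_W^\chi\circ\chi$ gives $\tau_{W,W}\circ\Delta_W^\chi(w)=R_W^\chi\,\Delta_W^\chi(w)\,(R_W^\chi)^{-1}$ for $w=\chi(v)$, hence for all $w\in W$ by surjectivity of $\chi$. Applying $\chi^{\otimes 3}$ to the hexagon identities and noting that $\chi^{\otimes 3}$ sends $(R_V)_{ij}$ to $(R_W^\chi)_{ij}$ and intertwines $\Delta_V\otimes\id$, $\id\otimes\Delta_V$ with $\Delta_W^\chi\otimes\id$, $\id\otimes\Delta_W^\chi$ (after precomposing with $\chi$) yields the hexagon identities for $R_W^\chi$. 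Thus $V^\chi$ is quasi-cocommutative (braided) with $R$-matrix $R_W^\chi=(\chi\otimes\chi)(R_V)$.

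The only genuine subtlety --- the main obstacle --- is the sign bookkeeping in the super case: one must check that the Koszul signs hidden in $\tau$, in the graded tensor-product multiplication, and in the leg-numbering conventions $(\cdot)_{ij}$ are all respected, which is precisely where the hypothesis that $\chi$ is an \emph{even} morphism enters. Once this is noted, the entire argument is a formal diagram chase requiring no real computation.
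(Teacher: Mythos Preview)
Your proof is correct and follows essentially the same approach as the paper, which simply states that ``the result follows from the definition of a Hopf (super)algebra morphism and direct computations.'' You have carefully spelled out the transport-of-structure argument that the paper leaves implicit, including the role of evenness of $\chi$ in handling the Koszul signs; this is exactly what the paper's terse one-line proof is gesturing at.
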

\begin{proof}
	The result follows from the definition of a Hopf (super)algebra morphism and direct computations.
\end{proof}

\vspace{0.3cm}

The Hopf (super)algebra
\begin{equation}
	\label{eq:hstwistnots}
	V^{\chi} := (W, \mu_{W}, \eta_{W}, \Delta_{W}^{\chi}, \epsilon_{W}^{\chi}, S^{\chi}_{W})
\end{equation}
is called the twisted Hopf (super)algebra by the isomorphism $\chi$. The same notation we use for a quasi-cocommutative (braided) Hopf (super)algebra $V^{\chi} := (W, \mu_{W}, \eta_{W}, \Delta_{W}^{\chi}, \epsilon_{W}^{\chi}, S^{\chi}_{W}, R_{W}^{\chi})$.

\vspace{0.5cm}

Let $C$ be a (super)coalgebra over a field $k$. A nonzero element $g$ in $C$ is said to be a group-like element if $\Delta(g) = g \otimes g$. The set of group-like elements in $C$ is denoted by $G(C)$. For $g, h \in G(C)$, $c \in C$ is said to be skew primitive, or more precisely, $(g, h)$-skew primitive, if $\Delta(c) = c \otimes g + h \otimes c$.
The set of all $(g, h)$-skew primitive elements is denoted by $P_{g,h}(C)$.

\subsection{Lusztig type isomorphisms}

\label{subs:LTI}

In this section we show that morphisms of supercategory $\mathcal{W}(\mathcal{R})$ can be represented by isomorphisms between the quantum superalgebras $U_q^{d}$, where $d \in D$, in supercategory $\underline{\text{sAlg}}$. Compare with the Section \ref{sc:CcLs}, see also \cite{HSTY08}, \cite{L10}.

We introduce the covariant faithful functor $F_q: \mathcal{W}(\mathcal{R}) \to \underline{\text{sAlg}}$. Fix $(A_{d}, \tau^{d}) \in \text{Obj}(\mathcal{W}(\mathcal{R}))$ for $d \in D$. The action on objects is given for all $d \in D$ by the formula
\begin{equation}
	\label{eq:Fobjqalg}
	F_q((A_d, \tau^{d}))= U_q^{d},
\end{equation}
where $A_{d}=((\alpha_{i}^{d}, \alpha_{j}^{d}))_{i,j \in I}$ and $d \in D$.

Let $\rho(a_{d_1}) = a_{d_2}$ for $d_1, d_2 \in D$. We prove in Theorem \ref{th:ltisom} that $U_q^{d_1}$ and $U_q^{d_2}$ are isomorphic in $\underline{\text{sAlg}}$. Consider a generator $\sigma_{\alpha_{i,d_1}}^{d_1} \in \text{Hom}_{\mathcal{W}(\mathcal{R})}(a_{d_1}, a_{d_2})$ \eqref{eq:WCgenel} for an appropriate $\alpha_{i,d_1} \in \tau^{d_1}$ ($i \in I$) and fix an isomorphism
\[J_{i, d_1} \in \text{Hom}_{\underline{\text{sAlg}}}(U_q^{d_1}, U_q^{d_2}).\]
Define $F_q(\sigma_{\alpha_{i,d_1}}^{d_1}) = J_{i, d_1}$ and $F_q(\sigma_{x}^{d_2}) = J_{i, d_2}$, where $x = \sigma_{\alpha_{i,d_1}}^{d_1}(\alpha_{i,d_1})$ and $J_{i, d_2} := J_{i, d_1}^{-1}$. Additionally suppose that isomorphisms $V_{i,d}$ satisfy relations \eqref{eq:invWCeqgenl} - \eqref{eq:WCeqgenl} for all $i \in I$ and $d \in D$. It is easy to see that $F_q$ is indeed the covariant faithful functor that preserves the structure of the Weyl groupoid. We give an example of the family of isomorphisms $\{ F_q(\sigma) \in \text{Hom}(\underline{\text{sAlg}}) \}_{\sigma \in \mathcal{B}}$. Call them Lusztig type isomorphisms.

\vspace{0.3cm}

Let  
\[ T_{i, d_1}, T_{i, d_1}^{-}: U_q^{d_1} \to U_q^{d_2} \]
are unique isomorphisms in $\underline{\text{sAlg}}$ satisfying the following equations  for $|i-j|=1$ and $|i-l| \ge 2$ ($j,l \in I$)
\begin{equation}
\label{eq:ltif}
T_{i, d_1}(k_{i,d_1}) = T_{i, d_1}^{-}(k_{i,d_1}) = k_{i, d_2}^{-1},
\end{equation}
\begin{equation}
T_{i, d_1}(k_{j,d_1}) = T_{i, d_1}^{-}(k_{j,d_1}) = k_{i, d_2} k_{j,d_2}, \; T_{i, d_1}(k_{l,d_1}) =  T_{i, d_1}^{-}(k_{l,d_1}) = k_{l, d_2},
\end{equation}
\begin{equation}
T_{i, d_1}(e_{i,d_1}) = (-1)^{|\alpha_{i,d_1}|} f_{i, d_2} k_{i, d_2}^{b},
\end{equation}
\begin{equation}
T_{i, d_1}(f_{i,d_1}) = k_{i,d_2}^{-b} e_{i, d_2},	
\end{equation}
\begin{equation}
T_{i,d_1}^{-}(e_{i,d_1}) = k_{i,d_2}^{-b} f_{i,d_2}
\end{equation}
\begin{equation}
T_{i,d_1}^{-}(f_{i,d_1}) = (-1)^{|x|} e_{i,d_2} k_{i,d_2}^{b},
\end{equation}
\begin{equation}
T_{i, d_1}(e_{j,d_1}) = -(x,y) [ e_{x,d_2}, e_{y,d_2} ]_{q^{z}},		
\end{equation}
\begin{equation}
T_{i, d_1}(f_{j,d_1}) = [ f_{j,d_2}, f_{i,d_2} ]_{q^{-z}},
\end{equation}
\begin{equation}
T_{i,d_1}^{-}(e_{j,d_1}) = (\alpha,\beta) (-1)^{|x|(|x|+|y|)+[b=-1]+[b(x,y)=1]} [e_{j,d_2}, e_{i, d_2}]_{q^{z}},
\end{equation}
\begin{equation}
T_{i,d_1}^{-}(f_{j,d_1}) = (-1)^{1+|x||y|+[b=1]+[b(x,y)=-1]} [f_{i,d_2}, f_{j,d_2}]_{q^{-z}},
\end{equation}
\begin{equation}
T_{i, d_1}(e_{l,d_1}) = T_{i, d_1}^{-}(e_{l,d_1}) = e_{l,d_2},
\end{equation}
\begin{equation}
\label{eq:ltis}
T_{i, d_1}(f_{l,d_1}) = T_{i, d_1}^{-}(f_{l,d_1}) = f_{l,d_2},
\end{equation}
where $x = \sigma_{\alpha_{i,d_1}}^{d_1} (\alpha_{i,d_1})$, $y = \sigma_{\alpha_{i,d_1}}^{d_1} (\alpha_{j,d_1})$, $b = [|\alpha_{i,d_1}| = 0] - [|\alpha_{i,d_1}| = 1]$ and $z = b (x,y)$.

One has $T_{d_2, d_1}^{-} = (T_{d_1,d_2})^{-1}$.

\vspace{0.3cm}

\begin{theorem}
	\label{th:ltisom}
	There exist the unique covariant faithful functor $F_q: \mathcal{W}(\mathcal{R}) \to \underline{\text{sAlg}}$ which satisfies equation \eqref{eq:Fobjqalg} and for all $\sigma_{\alpha_{i,d_1}}^{d_1} \in \mathcal{B}$ ($i \in I$)
	\begin{equation}
		\label{eq:functdefmorphq}
		F_q(\sigma_{\alpha_{i,d_1}}^{d_1}) = T_{i, d_1}, \; F_q(\sigma_{x}^{d_2}) = T_{i, d_2}^{-},
	\end{equation} 
where \[ T_{i, d_1}, T_{i, d_1}^{-}: U_q^{d_1} \to U_q^{d_2} \]
are isomorphisms in $\underline{\text{sAlg}}$ above defined  by formulas \eqref{eq:ltif} - \eqref{eq:ltis}.
\end{theorem}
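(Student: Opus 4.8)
The plan is to establish Theorem~\ref{th:ltisom} in three stages: first verify that each $T_{i,d_1}$ (and $T_{i,d_1}^{-}$) defined by the formulas \eqref{eq:ltif}--\eqref{eq:ltis} is a well-defined superalgebra homomorphism $U_q^{d_1}\to U_q^{d_2}$; second verify that $T_{i,d_1}$ and $T_{i,d_2}^{-}$ are mutually inverse, hence isomorphisms; third check that the assignment $F_q$ on generators $\sigma_{\alpha_{i,d_1}}^{d_1}\mapsto T_{i,d_1}$ respects all the defining relations \eqref{eq:invWCeqgenl}--\eqref{eq:WCeqgenl} of $\mathcal{W}(\mathcal{R})$, so that $F_q$ descends to a well-defined functor, and finally that it is unique and faithful. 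The well-definedness in stage one amounts to checking that the images under $T_{i,d_1}$ of the left-hand sides of the defining relations \eqref{eq:QUSrelf}--\eqref{eq:QUSrels} and \eqref{eq:QUSrelt}--\eqref{eq:QUSrelfo} of $U_q^{d_1}$ vanish in $U_q^{d_2}$; by the universal property of a superalgebra presented by generators and relations, this is exactly what is needed to extend the generator-assignment to an algebra map.

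First I would record that the reflected simple system $\tau^{d_2}$ is obtained from $\tau^{d_1}$ by the transposition $\sigma_{\alpha_{i,d_1}}^{d_1}$ on roots, so that $x=\sigma_{\alpha_{i,d_1}}^{d_1}(\alpha_{i,d_1})=-\alpha_{i,d_2}$ and $y=\sigma_{\alpha_{i,d_1}}^{d_1}(\alpha_{j,d_1})=\alpha_{i,d_2}+\alpha_{j,d_2}$ when $|i-j|=1$, while $c^{d_1}_{\alpha,\beta}=c^{d_2}_{\sigma_{\alpha_{i,d_1}}^{d_1}(\alpha),\sigma_{\alpha_{i,d_1}}^{d_1}(\beta)}$ from the reformulated axiom~\ref{def:CSch4}. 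This dictionary, together with the identity $(x,y)$-values and the parity $b=[|\alpha_{i,d_1}|=0]-[|\alpha_{i,d_1}|=1]$, converts the verification of each $U_q$-relation into a finite computation in the rank-two (or rank-three, for the extra Serre-type relation involving $e_{i-1},e_i,e_{i+1}$) sub-superalgebra generated by $e_{i,d_1},f_{i,d_1},e_{j,d_1},f_{j,d_1}$ and the $k$'s. The Cartan relations \eqref{eq:QUSrelf}, the $k$-commutation, and the $[e,f]$-relations for indices at distance $\ge 2$ are immediate from \eqref{eq:ltif}--\eqref{eq:ltis}; the nilpotency relations $e_{i,d_1}^2=f_{i,d_1}^2=0$ and $e_{i,d_1}^p=f_{i,d_1}^p=0$, $k_{i,d_1}^p=1$ follow because $T_{i,d_1}$ sends each generator to a monomial of the form $(\text{$q$-power})\cdot f_{\cdot}k_{\cdot}^{b}$ or a $q$-bracket of two root vectors, and these obey the same nilpotency in $U_q^{d_2}$. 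The substantive checks are the mixed $[e_{i,d_1},f_{j,d_1}]_1=0$ relation, the two quantum Serre relations, and the cubic Serre-type relation; for these I would use the known commutation formulas for $q$-brackets $[e_{x,d_2},e_{y,d_2}]_{q^z}$ of PBW root vectors (Notation~\ref{nt:qan}) and the super-Jacobi/$q$-Serre identities in $U_q^{d_2}$.

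For stage two, computing $T_{i,d_2}^{-}\circ T_{i,d_1}$ on each generator of $U_q^{d_1}$ and checking it equals the identity is a direct (if lengthy) substitution using the explicit formulas and the sign bookkeeping encoded in the Iverson brackets $[b=\pm1]$, $[b(x,y)=\pm1]$; the Remark's formulas for $L_{i,d_1}^{-1}$ in the classical case give the exact template, and the quantum version differs only by the insertion of the $k_{i,d_2}^{\pm b}$ factors and the replacement of brackets by $q$-brackets. Stage three is where most real work lies: one must verify the braid-type relations of Lemma~\ref{lm:relclL}'s quantum analogue, i.e. that the $T$'s satisfy \eqref{eq:invWCeqgenl}--\eqref{eq:WCeqgenl}. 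The commuting relations for $|i-j|\ge 2$ are easy since the corresponding $T$'s act on disjoint sets of generators up to Cartan elements; the length-four and length-six braid relations (cases $|i-j|\ge2$ with a shared neighbour, and $|i-j|=1$) require comparing the two sides on $e_k,f_k$ for all $k\in I$ and invoking the rank-two/rank-three structure, exactly as in the classical Lemma~\ref{lm:relclL} but now tracking $q$-powers and $k$-factors. I expect the main obstacle to be precisely the verification of the length-six relation \eqref{eq:WCeqgenl} for adjacent odd/even pairs, since there the composite isomorphisms act nontrivially on root vectors $e_\beta$ for non-simple $\beta$ and one must show the accumulated $q$-power and sign cocycles on the two sides coincide; this reduces to the fact that $\mathcal{W}(\mathcal{R})$ is the braid groupoid of type $A$ (Theorem~\ref{th:bga}), so the relation holds at the level of the indexing data, and the remaining check is that the scalar/$k$-factor discrepancy is trivial, which one verifies by evaluating on lowest-weight generators. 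Finally, uniqueness follows because $\mathcal{B}$ generates $\mathcal{W}(\mathcal{R})$ and the values of $F_q$ on $\mathcal{B}$ are prescribed by \eqref{eq:functdefmorphq}, and faithfulness follows because distinct reduced words in $\mathcal{W}(\mathcal{R})$ induce distinct isomorphisms, detectable already on the Cartan part via the linear action on $\mathfrak{h}^*$.
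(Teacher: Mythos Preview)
Your proposal is correct and follows essentially the same approach as the paper, which simply records that ``the proof follows from the considerations preceding the statement and from the direct computations.'' Your three stages are precisely the direct computations being alluded to: well-definedness of the $T_{i,d_1}$ on the presentation, mutual inverseness with $T_{i,d_2}^{-}$, and compatibility with the groupoid relations \eqref{eq:invWCeqgenl}--\eqref{eq:WCeqgenl}.

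Two minor organizational remarks. First, in the paper the braid-type relations for the $T$'s (your stage three) are not folded into the proof of Theorem~\ref{th:ltisom} but are stated and verified separately as Lemma~\ref{lm:qbrel} immediately afterwards; logically this is a forward reference, since well-definedness of $F_q$ as a functor on $\mathcal{W}(\mathcal{R})$ requires those relations, so your self-contained ordering is cleaner. Second, your justification of faithfulness is more than is needed: the Cartan scheme is declared simply connected (so $\mathrm{Hom}_{\mathcal{W}(\mathcal{R})}(a,a)=\{\mathrm{id}_a\}$ and between any two objects there is at most one morphism), whence any functor out of $\mathcal{W}(\mathcal{R})$ is automatically faithful; you do not need to detect distinct words on the Cartan part.
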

\begin{proof}
	The proof follows from the considerations preceding the statement and from the direct computations.
\end{proof}

\begin{remark}
	Note that the inverse to $T_{i, d_1}: U_q^{d_1} \to U_q^{d_2}$ is given by the following equations
	\[ T_{i, d_1}^{-1}(k_{i,d_2}) = k_{i,d_1}^{-1}, \; T_{i, d_1}^{-1}(k_{j,d_2}) = k_{i,d_1} k_{j,d_1}, \; T_{i, d_1}^{-1}(k_{l,d_2}) = k_{l,d_1}, \]
	\[ T_{i, d_1}^{-1}(e_{i,d_2}) = k_{i,d_1}^{-b} f_{i,d_1}, \]
	\[ T_{i, d_1}^{-1}(f_{i,d_2}) = (-1)^{|\alpha_{i,d_1}|} e_{i,d_1} k_{i,d_1}^{b}, \]
	\[ T_{i, d_1}^{-1}(e_{j,d_2}) = (x,y) (-1)^{|\alpha_{i,d_1}|(|\alpha_{i,d_1}|+|\alpha_{j,d_1}|)+[b=-1]+[b(\alpha_{i,d_1},\alpha_{j,d_1})=1]} [e_{j,d_1}, e_{i,d_1}]_{q^{b(\alpha_{i,d_1},\alpha_{j,d_1})}}, \]
	\[ T_{i, d_1}^{-1}(f_{j,d_2}) = (-1)^{1+|\alpha_{i,d_1}||\alpha_{j,d_1}|+[b=1]+[b(\alpha_{i,d_1},\alpha_{j,d_1})=-1]} [f_{i,d_1}, f_{j,d_1}]_{q^{-b(\alpha_{i,d_1},\alpha_{j,d_1})}}, \]
	\[ T_{i, d_1}^{-1}(e_{l,d_2}) = e_{l,d_1}, \; T_{i, d_1}^{-1}(f_{l,d_2}) = f_{l,d_1}. \]
\end{remark}	

\begin{corollary}
	\label{cl:isomclass}
	Fix any $d_1$ and $d_2$ in $D$. Then $U_q^{d_1}$ and $U_q^{d_2}$ are isomorphic in $\underline{\text{sAlg}}$.
\end{corollary}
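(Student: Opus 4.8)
The plan is to deduce this immediately from Theorem~\ref{th:ltisom} and the connectedness of the Weyl groupoid. First I would recall that the Cartan scheme $\mathcal{C}$ underlying $\mathcal{W}(\mathcal{R})$ was chosen to be simply connected in Subsection~\ref{sc:Wgsl}; in particular $\mathcal{W}(\mathcal{R})$ is connected, so there is at least one morphism $w \in \text{Hom}_{\mathcal{W}(\mathcal{R})}(a_{d_1}, a_{d_2})$ between the two objects. Explicitly, $w$ may be taken as a composition of the generators $\sigma_{\alpha_{i,d}}^{d} \in \mathcal{B}$ \eqref{eq:WCgenel} corresponding to a path from the vertex labelled $d_1$ to the vertex labelled $d_2$ in the graph of Figure~\ref{pict:DD}.

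Next I would use that $\mathcal{W}(\mathcal{R})$ is a groupoid: by the relations \eqref{eq:invWCeqgenl} each generator $\sigma_{\alpha_{i,d_1}}^{d_1}$ is invertible, with inverse $\sigma_{x}^{d_2}$ where $x = \sigma_{\alpha_{i,d_1}}^{d_1}(\alpha_{i,d_1})$, hence the chosen $w$ is an isomorphism in $\mathcal{W}(\mathcal{R})$. Applying the covariant functor $F_q : \mathcal{W}(\mathcal{R}) \to \underline{\text{sAlg}}$ produced by Theorem~\ref{th:ltisom}, and using that any functor carries isomorphisms to isomorphisms, I would conclude that
\[ F_q(w) \in \text{Hom}_{\underline{\text{sAlg}}}\bigl(U_q^{d_1}, U_q^{d_2}\bigr) \]
is an isomorphism of superalgebras, where source and target are identified via \eqref{eq:Fobjqalg}. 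Unwinding $F_q$ along \eqref{eq:functdefmorphq}, this isomorphism is the corresponding product of Lusztig type isomorphisms $T_{i,d}$, $T_{i,d}^{-}$ given by \eqref{eq:ltif}--\eqref{eq:ltis}, so it is in fact completely explicit. This proves $U_q^{d_1} \cong U_q^{d_2}$ in $\underline{\text{sAlg}}$.

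I do not expect any real obstacle at this level: the statement is a formal consequence of Theorem~\ref{th:ltisom}, whose substance (the existence of the maps $T_{i,d}$, the verification of the groupoid relations \eqref{eq:invWCeqgenl}--\eqref{eq:WCeqgenl}, and in particular that these maps respect the quantum Serre-type relations \eqref{eq:QUSrelf}--\eqref{eq:QUSrels} and descend through the Hopf ideal $\mathfrak{I}$ of Definition~\ref{def:Hopidealsup}, so as to be well defined on $U_q^{d}$ rather than merely on $\mathfrak{U}_q^{d}$) has already been established. The only ingredient genuinely needed here beyond pure category theory is the connectedness of $\mathcal{W}(\mathcal{R})$, which was built into the construction of the Cartan scheme.
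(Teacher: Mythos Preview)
Your argument is correct and follows exactly the paper's own proof, which is the one-line observation that the result follows from Theorem~\ref{th:ltisom} together with the (simple) connectedness of $\mathcal{W}(\mathcal{R})$. You have simply unpacked that sentence in more detail.
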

\begin{proof}
	The result follows from Theorem \ref{th:ltisom} and the fact that Weyl groupoid $\mathcal{W}(\mathcal{R})$ is simply connected.
\end{proof}

In assumptions made in Theorem \ref{th:ltisom} we prove that isomorphisms $T_{i,d}$ ($i \in I$, $d \in D$) satisfy equations
\begin{lemma}
	\label{lm:qbrel}
	\begin{equation} \label{br1}
		T_{i, d_2}^{-} T_{i,d_1} = id_{a_{d_1}}, \; T_{i,d_1} T_{i, d_2}^{-} = id_{a_{d_2}},
	\end{equation}
	where $x =\sigma_{\alpha_{i,d_1}}^{d_1}(\alpha_{i,d_1})$;
	\begin{equation}  \label{br2}
		T_{j,d_2} T_{i,d_1} = T_{i,d_2} T_{j,d_1},
	\end{equation}
	iff $|i-j| \ge 2$ and $|\alpha_{i,d_1}| = |\alpha_{j,d_1}| = 0$;
	\begin{equation} \label{br3}
		T_{j,d_4} T_{i,d_3} T_{j,d_2} L_{i,d_1} = T_{i,d_4} T_{j,d_3} T_{i,d_2} T_{j,d_1},
	\end{equation}
	iff $|i-j| \ge 2$;
	\begin{equation} \label{br4}
		T_{i,d_3} T_{j,d_2} T_{i,d_1} = T_{j,d_3} T_{i,d_2} T_{j,d_1},
	\end{equation}
	iff $|i-j| = 1$ and $|\alpha_{i,d_1}| = |\alpha_{j,d_1}|$;
	\begin{equation} \label{br5}
		T_{j,d_6} T_{i,d_5} T_{j,d_4} T_{i,d_3} T_{j,d_2} T_{i,d_1} = T_{i,d_6} T_{j,d_5} T_{i,d_4} T_{j,d_3} T_{i,d_2} T_{j,d_1},
	\end{equation}
	iff $|i-j| = 1$.
\end{lemma}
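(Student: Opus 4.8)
The plan is to reduce the braid-type relations among the Lusztig isomorphisms $T_{i,d}$ to two ingredients that are already available: first, the relations \eqref{eq:invWCeqgenl}--\eqref{eq:WCeqgenl} satisfied by the generators $\sigma^d_{\alpha_{i,d}}$ of the Weyl groupoid $\mathcal W(\mathcal R)$ (and reorganized as braid relations in Theorem \ref{th:bga}); and second, the fact, established in Theorem \ref{th:ltisom}, that $F_q$ is a well-defined covariant faithful functor sending $\sigma^{d_1}_{\alpha_{i,d_1}}$ to $T_{i,d_1}$ and $\sigma^{d_2}_x$ to $T^-_{i,d_2}$. Since a covariant functor preserves composition and identities, each relation in the statement of the lemma is simply the image under $F_q$ of the corresponding relation in $\mathcal W(\mathcal R)$: \eqref{br1} is $F_q$ applied to \eqref{eq:invWCeqgenl}, \eqref{br2} to the length-two commuting relation, \eqref{br3} to the length-four relation for $|i-j|\ge 2$, \eqref{br4} to the length-three relation for equal-parity adjacent indices, and \eqref{br5} to the length-six relation for $|i-j|=1$. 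One must only check that the index bookkeeping matches: on the right-hand sides the objects $d_1,\dots,d_6$ are exactly the sequence of vertices visited by the corresponding composition of $\sigma$'s (determined by the maps $\rho^d_\alpha$), and because $\mathcal C$ is simply connected these source/target labels are forced, so the two sides of each equation are parallel morphisms $U_q^{d_1}\to U_q^{d_r}$ in $\underline{\text{sAlg}}$.

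Concretely, I would proceed as follows. First, recall from Theorem \ref{th:ltisom} that for each generator $\sigma^{d_1}_{\alpha_{i,d_1}}\in\mathcal B$ one has $F_q(\sigma^{d_1}_{\alpha_{i,d_1}})=T_{i,d_1}$ and $F_q(\sigma^{d_2}_x)=T^-_{i,d_2}$ with $T^-_{d_2,d_1}=(T_{d_1,d_2})^{-1}$; in particular the inverse-pair relation \eqref{eq:invWCeqgenl} gives, after applying the functor, $T^-_{i,d_2}T_{i,d_1}=F_q(\sigma^{d_2}_x\sigma^{d_1}_{\alpha_{i,d_1}})=F_q(\mathrm{id}_{a_{d_1}})=\mathrm{id}_{a_{d_1}}$ and symmetrically the second identity in \eqref{br1}. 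Next, for \eqref{br2}--\eqref{br5}, apply $F_q$ to the respective braid relation from the list \eqref{eq:invWCeqgenl}--\eqref{eq:WCeqgenl}; functoriality turns the equality of composites of $\sigma$'s into the equality of the corresponding composites of $T$'s, and the parity and index side-conditions ($|i-j|\ge 2$, $|i-j|=1$, $|\alpha_{i,d_1}|=|\alpha_{j,d_1}|$, etc.) are inherited verbatim because they are precisely the conditions attached to those relations in $\mathcal W(\mathcal R)$. Finally I would double-check \eqref{br3}: as written it mixes a $T$ and an $L$ on the left-hand side, which I read as a typo for $T_{i,d_1}$; under that reading it is the image of the $|i-j|\ge 2$ length-four relation.

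The only genuine content beyond formal functoriality is the verification that $F_q$ really is a functor, i.e. that the explicit formulas \eqref{eq:ltif}--\eqref{eq:ltis} for $T_{i,d_1},T^-_{i,d_1}$ are compatible with all defining relations of $\mathcal W(\mathcal R)$ — but that is exactly what Theorem \ref{th:ltisom} asserts, so here we may invoke it as a black box. Thus the proof is short: \textbf{the lemma is obtained by applying the covariant faithful functor $F_q$ of Theorem \ref{th:ltisom} to the braid-groupoid relations \eqref{eq:invWCeqgenl}--\eqref{eq:WCeqgenl} of $\mathcal W(\mathcal R)$ (equivalently, to the presentation in Theorem \ref{th:bga}), using $F_q(\sigma^{d_1}_{\alpha_{i,d_1}})=T_{i,d_1}$ and $F_q(\sigma^{d_2}_x)=T^-_{i,d_2}$.} The main ``obstacle'' is purely organizational: tracking that the tuples of superscripts $d_1,\dots,d_r$ on both sides of each identity are the ones dictated by the $\rho$-maps, so that the equated morphisms have the same source and target; this is guaranteed since $\mathcal C$ is simply connected. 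The straightforward computations alluded to in the one-line proof are then just the unwinding of these functor images.
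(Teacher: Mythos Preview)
Your functorial argument is elegant but circular as stated. For $F_q:\mathcal W(\mathcal R)\to\underline{\text{sAlg}}$ to be a well-defined functor on the groupoid $\mathcal W(\mathcal R)$---as opposed to merely on the free groupoid generated by the $\sigma^{d}_{\alpha_{i,d}}$---the images $T_{i,d}=F_q(\sigma^{d}_{\alpha_{i,d}})$ must already satisfy the defining relations \eqref{eq:invWCeqgenl}--\eqref{eq:WCeqgenl} of $\mathcal W(\mathcal R)$. But those relations, transported to the $T$'s, are precisely the identities \eqref{br1}--\eqref{br5} you are trying to prove. So invoking Theorem \ref{th:ltisom} as a black box to deduce Lemma \ref{lm:qbrel} is assuming the conclusion: either the ``direct computations'' in the proof of Theorem \ref{th:ltisom} already contain the verification of \eqref{br1}--\eqref{br5} (in which case Lemma \ref{lm:qbrel} merely isolates that step), or they do not, and then Theorem \ref{th:ltisom} is incomplete without Lemma \ref{lm:qbrel}.

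The paper's own proof is the honest route: one evaluates both sides of each identity on the Chevalley generators $k_{l,d_1},e_{l,d_1},f_{l,d_1}$ using the explicit formulas \eqref{eq:ltif}--\eqref{eq:ltis} and checks equality by direct (if tedious) computation in $U_q^{d_r}$. That is what ``straightforward computations'' means here, and it is the actual content; your reduction to functoriality is a correct \emph{reformulation} of what has to be checked, not an independent proof.
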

\begin{proof}
	The result follows from the straightforward computations.
\end{proof}

We will call the algebraic structure defined in the lemma \ref{lm:qbrel} by the formulas (\ref{br1} -- \ref{br5})  {\it a braided groupoid of type A}.

\begin{theorem}
	The braid groupoid of type A operates via isomorphisms on quantum superalgebras $U_q^d$ ($d \in D$).
\end{theorem}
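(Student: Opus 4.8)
The plan is to deduce the statement formally from the results already established, in exact parallel with the classical theorem on the action of the braid groupoid on the Lie superalgebras $\mathfrak{g}(A_d,\tau^d)$. Recall that an operation (action) of the braid groupoid $\mathcal{W}(\mathcal{R})$ of type A on the family $\{U_q^d\}_{d\in D}$ means a functor from $\mathcal{W}(\mathcal{R})$ into $\underline{\text{sAlg}}$ whose object map is $a_d\mapsto U_q^d$ and which sends every morphism to an isomorphism of superalgebras. Thus the whole content is to exhibit such a functor, and the candidate is already at hand: the functor $F_q$ of Theorem~\ref{th:ltisom}, determined on objects by \eqref{eq:Fobjqalg} and on the generating morphisms $\sigma^{d}_{\alpha_{i,d}}\in\mathcal{B}$ (see \eqref{eq:WCgenel}) by \eqref{eq:functdefmorphq}, i.e. $F_q(\sigma^{d_1}_{\alpha_{i,d_1}})=T_{i,d_1}$ and $F_q(\sigma^{d_2}_{x})=T^{-}_{i,d_2}$.

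First I would invoke Theorem~\ref{th:bga}, which presents $\mathcal{W}(\mathcal{R})$ as the braid groupoid of type A: generating set $\mathcal{B}$ and defining relations \eqref{eq:invWCeqgenl}--\eqref{eq:WCeqgenl}. Consequently, to check that $F_q$ is a well-defined functor it suffices to verify that the chosen images of the generators satisfy the images of these relations in $\underline{\text{sAlg}}$. Two things must be observed: (i) the maps $T_{i,d}\colon U_q^{d_1}\to U_q^{d_2}$ are isomorphisms by construction, being defined by the explicit formulas \eqref{eq:ltif}--\eqref{eq:ltis} with inverses recorded in the Remark following Theorem~\ref{th:ltisom}; and (ii) the relations these isomorphisms must satisfy are precisely the identities \eqref{br1}--\eqref{br5} of Lemma~\ref{lm:qbrel}, which are nothing but the translation of \eqref{eq:invWCeqgenl}--\eqref{eq:WCeqgenl} under $F_q$. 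Since Lemma~\ref{lm:qbrel} asserts exactly these identities, $F_q$ is a genuine (and, by Theorem~\ref{th:ltisom}, faithful) functor.

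It then follows immediately that $F_q$ transports the groupoid structure: every morphism of $\mathcal{W}(\mathcal{R})$ is a composite of the generators $\sigma^{d}_{\alpha}$ and their inverses, so its image under $F_q$ is a composite of the isomorphisms $T_{i,d}$, $T^{-}_{i,d}$, hence again an isomorphism in $\underline{\text{sAlg}}$; equivalently, by Corollary~\ref{cl:isomclass} all $U_q^d$ are isomorphic and the morphisms of the braid groupoid act among them by algebra isomorphisms. This is exactly the assertion that the braid groupoid of type A operates via isomorphisms on the $U_q^d$, and the proof is completed by combining Theorem~\ref{th:bga}, Theorem~\ref{th:ltisom} and Lemma~\ref{lm:qbrel}.

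The only genuinely laborious ingredient is Lemma~\ref{lm:qbrel} itself: establishing the braid-type identities \eqref{br1}--\eqref{br5} among the $T_{i,d}$ requires composing the defining rules \eqref{eq:ltif}--\eqref{eq:ltis} and carefully tracking the sign and $q$-power cocycle factors (the exponent $b$, the parameter $z=b(x,y)$, and the Iverson-bracket signs) through each composition, just as in the analogous classical computation underlying Lemma~\ref{lm:relclL}. That is where the work lies; but since that lemma is already proved, the present theorem is an immediate corollary.
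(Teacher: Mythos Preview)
Your proposal is correct and follows essentially the same approach as the paper: the paper's proof is the single line ``The result follows from Lemma~\ref{lm:qbrel} and Theorem~\ref{th:ltisom},'' and your argument is simply a fuller unpacking of that line, additionally invoking Theorem~\ref{th:bga} to make explicit the presentation of $\mathcal{W}(\mathcal{R})$ as a braid groupoid.
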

\begin{proof}
	The result follows from Lemma \ref{lm:qbrel} and Theorem \ref{th:ltisom}.
\end{proof}

Define the subcategory $\mathcal{QS}$ in the supercategory $\underline{\text{sAlg}}$ as the image of the functor $F_q: \mathcal{W}(\mathcal{R}) \to \underline{\text{sAlg}}$ defined above. Recall that objects of $\mathcal{QS}$ \eqref{eq:Fobjqalg} are also Hopf superalgebras defined by \eqref{eq:standartHsupalgstrf} - \eqref{eq:standartHsupalgstrs}. Thus it follows from Proposition \ref{pr:nsHstr} that morphisms in $\mathcal{QS}$ \eqref{eq:functdefmorphq} are also morphisms in supercategory $\underline{\text{sHAlg}}$. Consequently, $\mathcal{QS}$ is also the subcategory in the supercategory $\underline{\text{sHAlg}}$. Recall that we defined in the analogous way the subcategory $\mathcal{SL}$ in the supercategory $\underline{\text{sBiLieAlg}}$, see Section \ref{sc:CcLs}.

\begin{proposition}
	Categories $\mathcal{QS}$ and $\mathcal{SL}$ are equivalent, where the equivalence $\mathcal{H}: \mathcal{QS} \to \mathcal{SL}$ is defined on objects by $\mathcal{H}(U_{q}^{d}) = \mathfrak{g}(A_{d},\tau^{d})$ and on morphisms by $\mathcal{H}(id_{U_{q}^{d}}) = id_{\mathfrak{g}(A_{d},\tau^{d})}$ and $\mathcal{H}(T_{i,d_1}) = L_{i,d_1}$ ($d, d_1, d_2 \in D$).
\end{proposition}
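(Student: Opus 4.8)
The plan is to verify directly that $\mathcal{H}$ is a well-defined functor and then exhibit an inverse (or at least an essentially surjective, full, faithful structure) so that it is an equivalence. First I would check that $\mathcal{H}$ is well-defined on morphisms. Since both $\mathcal{QS}$ and $\mathcal{SL}$ are, by construction, the images of faithful functors $F_q$ and $F$ out of the same source supercategory $\mathcal{W}(\mathcal{R})$, every morphism of $\mathcal{QS}$ is a composite of generators $T_{i,d_1}$ (and their inverses $T_{i,d_2}^{-}$), and similarly every morphism of $\mathcal{SL}$ is a composite of the $L_{i,d_1}$. The assignment $\mathcal{H}$ sends $T_{i,d_1}\mapsto L_{i,d_1}$, and to see that this respects composition it suffices to observe that the $L_{i,d}$ satisfy exactly the same braid-groupoid relations \eqref{br1}--\eqref{br5} (stated for the $L$'s in Lemma \ref{lm:relclL}) as the $T_{i,d}$ do (Lemma \ref{lm:qbrel}); hence any relation among composites of $T$'s holds among the corresponding composites of $L$'s, so $\mathcal{H}$ descends to a functor. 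Concretely: $F_q$ identifies $\mathcal{QS}$ with a quotient of the braid groupoid, $F$ identifies $\mathcal{SL}$ with the same quotient, and $\mathcal{H}$ is the induced isomorphism — but I would phrase it as an equivalence to be safe, in case $F$ or $F_q$ fails to be injective on objects.

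Next I would construct the inverse functor $\mathcal{H}^{-1}:\mathcal{SL}\to\mathcal{QS}$ by $\mathfrak{g}(A_d,\tau^d)\mapsto U_q^d$ and $L_{i,d_1}\mapsto T_{i,d_1}$, and check by the same relation-matching argument that it too is well-defined. Then $\mathcal{H}\circ\mathcal{H}^{-1}$ and $\mathcal{H}^{-1}\circ\mathcal{H}$ are the identity functors on objects and on generating morphisms, hence the identity functors, so $\mathcal{H}$ is an isomorphism of categories (in particular an equivalence). Alternatively, and perhaps more cleanly, I would note that $F$ and $F_q$ are both faithful functors out of $\mathcal{W}(\mathcal{R})$ whose object maps $d\mapsto\mathfrak{g}(A_d,\tau^d)$ and $d\mapsto U_q^d$ are bijections onto $\mathrm{Obj}(\mathcal{SL})$ and $\mathrm{Obj}(\mathcal{QS})$ respectively, and whose morphism maps identify $\mathrm{Hom}$-sets compatibly; then $\mathcal{H}=F\circ F_q^{-1}$ on the (groupoid) image is forced, and the fact that $F_q$ is faithful and full onto its image makes $F_q^{-1}$ meaningful as a functor $\mathcal{QS}\to\mathcal{W}(\mathcal{R})$, so $\mathcal{H}$ is literally a composite of functors.

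The one genuine point to nail down — and the main obstacle — is \emph{faithfulness of $\mathcal{H}$ on the nose}, equivalently that no extra relations hold in $\mathcal{QS}$ beyond those forced by $\mathcal{W}(\mathcal{R})$ that fail in $\mathcal{SL}$ (or vice versa). This is handled by the simply-connectedness of $\mathcal{W}(\mathcal{R})$ together with Corollaries \ref{cl:isomclass} and its classical analogue: both $\mathrm{Hom}_{\mathcal{QS}}$ and $\mathrm{Hom}_{\mathcal{SL}}$ between any two objects are singletons (a unique isomorphism), so $\mathcal{H}$ is automatically fully faithful, and it is essentially surjective since every object of $\mathcal{SL}$ is $\mathfrak{g}(A_d,\tau^d)=\mathcal{H}(U_q^d)$. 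I would therefore organize the proof as: (1) well-definedness of $\mathcal{H}$ on morphisms via the matching relations of Lemmas \ref{lm:qbrel} and \ref{lm:relclL}; (2) functoriality (preservation of identities and composition), immediate from the definition on generators; (3) essential surjectivity from the object formula; (4) fully faithfulness from simply-connectedness, i.e. all relevant $\mathrm{Hom}$-sets are singletons. The routine verification is step (1)–(2); the conceptual content is entirely in step (4), which however is already available from the stated corollaries.
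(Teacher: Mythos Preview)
Your proposal is correct and follows essentially the same route as the paper: the paper's proof is the single line ``It is easy to see that the functor $\mathcal{H}$ is full, faithful and dense,'' and your steps (3)--(4) unpack exactly that (essential surjectivity from the object formula, fully-faithfulness from simply-connectedness of $\mathcal{W}(\mathcal{R})$ forcing all Hom-sets to be singletons). Your additional discussion of well-definedness via the matching braid relations of Lemmas~\ref{lm:relclL} and~\ref{lm:qbrel} is a reasonable elaboration that the paper leaves entirely implicit.
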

\begin{proof}
	It is easy to see that the functor $\mathcal{H}$ is full, faithful and dense. The result follows.
\end{proof}

\vspace{0.5cm}

\subsection{PBW basis of $U_q^d$}

\label{subs:pbwb}

We build for any $d \in D$ the PBW basis of $U_q^d$ in the case of $sl(2|1)$. Remind the notations and conventions introduced in \ref{nt:qan}. See also \cite{T19a}, \cite{T19b}.

\begin{theorem}
	\label{th:pbwbasis}
	The elements
	\[ \mathcal{Y} = \{ f_{h_{-}, d} \cdot k_{h_{0}, d} \cdot e_{h_{+}, d} \; | \; h_{-}, h_{+} \in H, h_{0} \in H_{0} \} \]
	form a $\mathbb{Q}(q)$-basis of the quantum superalgebra $U_q^d$, where $d \in D$.
\end{theorem}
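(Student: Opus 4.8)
The plan is to establish the PBW theorem for $U_q^d$ by combining three ingredients: the known PBW basis for one distinguished realization, the algebra isomorphisms from Theorem \ref{th:ltisom}, and a careful analysis of how the Lusztig type isomorphisms $T_{i,d}$ transform ordered PBW monomials. First I would fix the distinguished Dynkin diagram $d=1$ and recall (see \cite{HY10}, \cite{T19a}, \cite{T19b} and Proposition 3.1 of \cite{MS19}) that $\mathcal{Y}$ for $d=1$ is indeed a $\mathbb{Q}(q)$-basis of $U_q^1$; this amounts to the triangular decomposition $U_q^1 \cong (\mathfrak{U}_q^1)^{<} \otimes (\mathfrak{U}_q^1)^{0} \otimes (\mathfrak{U}_q^1)^{>}$ passing to the quotient by the Hopf ideal $\mathfrak{I}$, together with the statement that the ordered monomials $e_{h,1}$ (resp. $f_{h,1}$) form a basis of $(\mathfrak{U}_q^1)^{>}$ (resp. $(\mathfrak{U}_q^1)^{<}$) and that $k_{g,1}$ ($g \in H_0$) form a basis of $(\mathfrak{U}_q^1)^{0}$ after imposing $k_{i,1}^p = 1$.

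Next, since $\mathcal{W}(\mathcal{R})$ is simply connected, Corollary \ref{cl:isomclass} gives for every $d \in D$ an isomorphism $\Phi_d \colon U_q^1 \to U_q^d$ in $\underline{\text{sAlg}}$, obtained as a composition of the generating isomorphisms $T_{i,d'}$ and their inverses. An algebra isomorphism sends a basis to a basis, so $\Phi_d(\mathcal{Y}_1)$ is automatically a $\mathbb{Q}(q)$-basis of $U_q^d$, where $\mathcal{Y}_1$ denotes the set $\mathcal{Y}$ for $d=1$. The real work is therefore to show that the image $\Phi_d(\mathcal{Y}_1)$ coincides, up to a triangular (unipotent) change of basis, with the set $\mathcal{Y}$ built intrinsically from the root vectors $e_{\beta,d}, f_{\beta,d}$ and $k_{i,d}$ of the $d$-th realization. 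For this I would track how each $T_{i,d'}$ acts on root vectors: using the defining formulas \eqref{eq:ltif}--\eqref{eq:ltis} one checks that $T_{i,d'}$ sends a root vector $e_{\beta,d'}$ to a scalar multiple of the root vector $e_{\sigma_{\alpha_{i,d'}}^{d'}(\beta),\,\rho(d')}$ (possibly multiplied by a group-like element $k_{\gamma}^{\pm}$) when $\sigma_{\alpha_{i,d'}}^{d'}(\beta)$ is again positive, and to an element of the opposite triangular part when it becomes negative. Ordering $\Delta^+_d$ by \eqref{eq:orderRootquant}, the commutator relations among the $e_{\beta,d}$ together with the $e_{i,d}^p = f_{i,d}^p = 0$ and $k_{i,d}^p = 1$ relations let one re-expand any reordered product in terms of the standard ordered monomials $e_{h,d}$ with a unitriangular transition matrix; the analogous statement holds on the $f$-side, and the $H_0$-part is handled directly since the $k_{i,d}$ pairwise commute by \eqref{eq:QUSrelf}.

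The main obstacle I expect is precisely this combinatorial bookkeeping: showing that conjugation by the $T_{i,d'}$ preserves the PBW filtration and produces only a unitriangular (hence invertible) change of basis, rather than destroying the triangular decomposition. In the $sl(2|1)$ case the positive system has only three roots, so one can verify this by a finite, explicit case check over the six Dynkin diagrams of Figure \ref{pict:DD}, computing the images of $f_3, f_1, f_2, k_1, k_2, e_1, e_2, e_3$ (in the relevant orderings) under each generating isomorphism and confirming linear independence and spanning directly. Thus the proof reduces to: (i) cite the PBW basis for $d=1$; (ii) transport it by $\Phi_d$ to get some basis of $U_q^d$; (iii) show by the explicit action of the Lusztig type isomorphisms on root vectors that this basis is unitriangularly equivalent to $\mathcal{Y}$, so that $\mathcal{Y}$ itself is a basis. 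I would present (iii) as a short lemma on the effect of $T_{i,d}$ on $e_{\beta,d}$, $f_{\beta,d}$ and $k_{i,d}$, and leave the finitely many resulting verifications as routine.
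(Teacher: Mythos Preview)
Your approach is genuinely different from the paper's. The paper gives a direct, intrinsic proof for every $d$ simultaneously, following a diamond-lemma style argument: it embeds $U_q^d$ as a quotient $T(L)/J$ of a tensor superalgebra on the set $X=\{e_{\alpha,d},f_{\alpha,d},k_{i,d}^{\pm1}\}$, proves spanning by induction on degree and on an ``index'' counting out-of-order pairs, and then proves linear independence by constructing a linear map $\theta:T(L)\to R=\mathbb{Q}(q)[z_1,\dots,z_{|X|}]$ that vanishes on $J$ and is injective on the standard monomials. No use of the Lusztig isomorphisms or of the $d=1$ case is made.

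Your reduction strategy is reasonable in spirit, but step (iii) is where the content lies and it is not as routine as you suggest. The generating isomorphisms $T_{i,d'}$ do \emph{not} preserve the triangular decomposition: already $T_{i,d_1}(e_{i,d_1})=(-1)^{|\alpha_{i,d_1}|}f_{i,d_2}k_{i,d_2}^{b}$ lands in $(U_q^{d_2})^{<}\cdot(U_q^{d_2})^{0}$, so after composing several $T$'s the image $\Phi_d(f_{h_-,1}k_{h_0,1}e_{h_+,1})$ is a genuinely mixed word in $e$'s, $f$'s and $k$'s of the $d$-th realization. To rewrite such words in terms of the ordered monomials $\mathcal{Y}_d$ you must already carry out the full straightening algorithm in $U_q^d$, and to know that the resulting transition matrix is invertible (let alone unitriangular with respect to any natural filtration on $H\times H_0\times H$) is essentially equivalent to knowing that $\mathcal{Y}_d$ is linearly independent. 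So as written, step (iii) begs the question.

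There is a clean way to salvage your outline. Since we are at a root of unity and have quotiented by $e_{i,d}^{p},f_{i,d}^{p},k_{i,d}^{p}-1$, the algebras $U_q^d$ are finite-dimensional, and by Corollary~\ref{cl:isomclass} one has $\dim_{\mathbb{Q}(q)}U_q^d=\dim_{\mathbb{Q}(q)}U_q^1=|\mathcal{Y}_1|=|\mathcal{Y}_d|$. Hence it suffices to prove that $\mathcal{Y}_d$ \emph{spans} $U_q^d$; linear independence then follows by counting. Spanning is exactly the index-reduction argument the paper gives (and which you allude to only in passing), and it really does need to be written out: one must verify that every $q$-commutator $[x,y]_{q^{\delta(x,y)}}$ of two PBW generators lies in the span of monomials of lower degree or lower index, including for the non-simple root vectors $e_{\beta,d},f_{\beta,d}$. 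If you replace your unitriangularity claim by this spanning-plus-dimension argument, the proof becomes correct; what the paper's approach buys instead is a self-contained argument that does not rely on the $d=1$ case or on the isomorphisms $T_{i,d}$, at the cost of constructing the auxiliary map~$\theta$.
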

\begin{proof}
	The statement immediately follows from the proof (\cite{MS19}, Theorem $3.1$). We need only to add extra relations \ref{eq:TLrels} and check that the result remains true. Therefore, we give only a sketch of the proof.
	
	Consider a $\mathbb{Q}(q)$ super vector space $L$ generated by $X = \{ e_{\alpha,d}, f_{\alpha,d}, k_{i,d}^{\pm 1} \; | \; \alpha \in \Delta^{+}_{d}, i \in I \}$. Introduce a pair $(T(L),i)$ where $T(L)$ is the tensor superalgebra of the vector superspace $L$ and $i$ is the canonical inclusion of $L$ in $T(L)$. We identify for convenience $X$ and $i(X)$. Rewrite equations \eqref{eq:QUSrelf} - \eqref{eq:QUSrels} and \eqref{eq:QUSrelt} - \eqref{eq:QUSrelfo} in $T(L)$ in the following way
	\begin{equation}
		\label{eq:TLrelf}
		a \otimes b - (-1)^{|a||b|} q^{\delta(a,b)} b \otimes a - [a,b]_{q^{\delta(a,b)}} = 0,
	\end{equation}
	where $a,b \in X, \; [a,b]_{q^{\delta(a,b)}} \in T(L), \; \delta: X \times X \to \{ -2,-1,0,1,2 \}$,
	\begin{equation}
		\label{eq:TLrels}
		a^{\otimes p} - c_{a} = 0,
	\end{equation}
	where $a \in X$, $|a| = 0$ and $c_{a} \in \mathbb{Q}(q) \subset T(L)$. Denote by $J$ a $\mathbb{Z}_{2}$-graded two-sided ideal in $T(L)$ generated by relations \eqref{eq:TLrelf} and \eqref{eq:TLrels}. Notice that $U_{q}^{d} \cong T(L)/J$.
	
	The index of $x_{i_1} \otimes x_{i_2} \otimes ... \otimes x_{i_n} \in T(L)$ is defined to be the number of pairs $(l,m)$ with $l < m$ but $x_{i_l} > x_{i_m}$, where $x_{i_j} \in X$, $i_j \in \Delta^{+}_d \cup I$ and $j \in \mathbb{N}$. We adopt in a natural way the definition of the index on elements of $U_q^d$. Denote by $G$ the monomials having index $0$. Notice that $G = \mathcal{Y}$ in $U_q^d$. Thus, we want to prove that $G$ forms the basis of $U_q^d$ considered as the $\mathbb{Q}(q)$-superspace.
	
	Notice that each element in $U_q^d$ is a $\mathbb{Q}(q)$-linear combination of unit and standard monomials. Indeed, it is easy to prove by induction on degree and index of elements in $U_q^d$ that this is the case.
	
	Further show that elements of $G$ are linear independent in $U_q^d$. Let $R = \mathbb{Q}(q)[z_1, ... , z_{|X|}]$ be the polynomial ring. Endow $R$ with the structure of the superalgebra by defining the parity function $|z_{i}| = |f_{\alpha,d}|$, $|z_{j+|\Delta^{+}_d|}| = |k_{j,d}|$ and $|z_{i+|I|+|\Delta^{+}_d|}| = |e_{\alpha^{'},d}|$, where $i \in \{1,...,|\Delta^{+}_d|\}$, $\alpha\in \Delta^{+}_d$ follow in descending order, $j \in I$ and $\alpha^{'} \in \Delta^{+}_d$ follow in ascending order. Now we want to construct a morphism of superspaces $U_q^d \to R$ which restriction on $G$ is a monomorphism that takes all the elements of $G$ to linear independent polynomials in $R$. Then the result follows. Thus, we proof that there is a superspace morphism $\theta:T(L) \to R$ which satisfies the following relations
	\[ \theta(1) = 1, \; \theta(f_{\alpha,d}) = z_{i}, \; \theta(k_{j,d}) = z_{j+|\Delta^{+}_d|}, \; \theta(e_{\alpha^{'},d}) = z_{i+|I|+|\Delta^{+}_d|}, \]
	where $i \in \{1,...,|\Delta^{+}_d|\}$, $\alpha\in \Delta^{+}_d$ follow in descending order, $j \in I$ and $\alpha^{'} \in \Delta^{+}_d$ follow in ascending order,
	\[ \theta(x_{i_1} \otimes x_{i_2} \otimes ... \otimes x_{i_n}) = z_{i_1} z_{i_2} ... z_{i_n}, \; \text{if } x_{i_1} \le x_{i_2} \le
	... \le x_{i_n}, \]
	\[ \theta( x_{i_1} \otimes x_{i_2} \otimes ... \otimes  x_{i_k} \otimes x_{i_{k+1}} \otimes ... \otimes x_{i_n}) -
	(-1)^{|x_{i_k}||x_{i_{k+1}}|} q^{\delta(x_{i_k},x_{i_{k+1}})} \theta( x_{i_1} \otimes x_{i_2} \otimes ... \otimes  x_{i_{k+1}}
	\otimes x_{i_k} \otimes ... \otimes x_{i_n}) = \]
	\[ = \theta( x_{i_1} \otimes x_{i_2} \otimes ... \otimes  [x_{i_k},x_{i_{k+1}}] \otimes ... \otimes x_{i_n} ) \]
	for all $x_{i_1},x_{i_2},...,x_{i_n} \in X$ and $1 \le k < n$, where $x_{i_j} \in X$, $i_j \in \Delta^{+}_d \cup I$ and $j \in \mathbb{N}$,
	\[ \theta(x^{\otimes p}) = c_{x}, \]
	where $x \in X, \; |x| = 0$ and $c_{x} \in \mathbb{Q}(q)$.
	
	Recall that $T^{0}(L) = \mathbb{Q}(q) 1$ and $T^{n}(L)= \bigotimes_{i=1}^{n} L$, where $n \in \mathbb{N}$. Denote by $T^{n,j}(L)$ a linear subspace $T^{n}(L)$ spanned by all monomials $x_{i_1} \otimes x_{i_2} \otimes ... \otimes x_{i_n}$, which have index less or equal to $j$. Thus,
	\[ T^{n,0}(L) \subset T^{n,1}(L) \subset ... \subset T^{n}(L). \]
	We define $\theta:T^{0}(L) \to R$ by $\theta(1)=1$. Suppose inductively that $\theta:T^{0}(L) \oplus T^{1}(L) ... \oplus T^{n-1}(L) \to R$ has already been defined satisfying the required conditions. We will show that $\theta$ can be extended to $\theta:T^{0}(L) \oplus T^{1}(L) ... \oplus T^{n}(L) \to R$. We define $\theta:T^{n,0}(L) \to R$ by
	\[ \theta(x_{i_1} \otimes x_{i_2} \otimes ... \otimes x_{i_n}) = z_{i_1} z_{i_2}...z_{i_n} \]
	for standard monomials of degree $n$. We suppose $\theta:T^{n,i-1} \to R$ has already been defined, thus giving a superspace morphism from $\theta:T^{0}(L) \oplus T^{1}(L) ... \oplus T^{n-1}(L) \oplus T^{n,i-1}(L) \to R$ satisfying the required 	conditions. We wish to define $\theta:T^{n,i}(L) \to R$.
	
	Assume that the monomial $x_{i_{1}}\otimes x_{i_{2}}\otimes...\otimes x_{i_{n}}$ has the index $i\ge1$ and let $x_{i_{k}}\ge x_{i_{k+1}}$. Then define
	\begin{equation}\label{eq:basisULinearMap}
	\theta(x_{i_{1}}\otimes...\otimes x_{i_{k}}\otimes x_{i_{k+1}}\otimes...\otimes
	x_{i_{n}})=\theta(x_{i_{1}}\otimes...\otimes[x_{i_{k}},x_{i_{k+1}}]\otimes...\otimes x_{i_{n}}) +
	\end{equation}
	\[ +(-1)^{|x_{i_{k}}||x_{i_{k+1}}|}q^{\delta(x_{i_{k}},x_{i_{k+1}})}\theta(x_{i_{1}}\otimes...\otimes x_{i_{k+1}}\otimes
	x_{i_{k}}\otimes...\otimes x_{i_{n}}). \]	
	This definition is correct as both terms on the right side of the equation belong to a super vector space $T^{0}(L)+T^{1}(L)+...+T^{n-1}(L)+T^{n,i-1}(L)$. We state that the definition \ref{eq:basisULinearMap} doesn't depend on the choise of the pair $(x_{i_{k}},x_{i_{k+1}})$, where $x_{i_{k}}>x_{i_{k+1}}$. Let $(x_{i_{j}},x_{i_{j+1}})$ be another pair, where $x_{i_{j}}>x_{i_{j+1}}$. There are two different possible situations: 1. $x_{i_{j}}>x_{i_{k+1}}$, 2. $x_{i_{j}}=x_{i_{k+1}}$. It is easy to see that the statement is true in both cases.
	
	Further define
	\begin{equation}\label{eq:basisULinearMaps}
	\theta(x_{i_{1}}\otimes...\otimes x_{i_{k}} \otimes x^{\otimes p} \otimes x_{i_{k+p+1}} \otimes ...\otimes x_{i_{n}}) = c_{x} \theta(x_{i_{1}}\otimes...\otimes x_{i_{k}} \otimes x_{i_{k+p+1}} \otimes ...\otimes x_{i_{n}}),
	\end{equation}
	where $p \le n$, $x \in X$, $|x| = 0$ and $c_{x} \in \mathbb{Q}(q)$. Let the monomial $x_{i_{1}}\otimes...\otimes x_{i_{k}} \otimes x^{\otimes p} \otimes x_{i_{k+p+1}} \otimes ...\otimes x_{i_{n}}$ have the index $i \ge 1$. Then it is easy to see that the order of application of equations \eqref{eq:basisULinearMap} and \ref{eq:basisULinearMaps} doesn't affect on result. Notice, in this connection, that
	\[ \theta( x^{p} \otimes y ) = \theta( y \otimes x^{p}) = c_{x} \theta(y), \]
	if $x > y$, where $x, y \in X$, $|x| = 0$ and $c_{x} \in \mathbb{Q}(q)$,
	\[ \theta( y \otimes x^{p} ) = \theta( x^{p} \otimes y) = c_{x} \theta(y), \]
	if $y > x$, where $x, y \in X$, $|x| = 0$ and $c_{x} \in \mathbb{Q}(q)$.
	
	Thus we have defined a map $\theta:T^{n,i}(L) \to R$. A linear extension of this map gives us $\theta:\sum_{j=0}^{n-1} T^{j}(L) \oplus T^{n,i}(L) \to R$, which satisfies the required conditions. Since $T^{n}= T^{n,r}$ for sufficiently large $r$, we can consider a map $\theta: \sum_{j=0}^{n} T^j(L) \to R$. Since $ T(L)=T^{0} \oplus \sum_{\substack{i \in \mathbb{N}}} T^i(L)$, we get a map $\theta: T(L) \to R$, which satisfies the required conditions. It is easy to see that $\theta:T(L)\to R$ annihilates $J$. Thus, $\theta$ induces the required superspace morphism $\bar{\theta}:T(L)/J\to R$, that is $\bar{\theta}:U_{q}^{d} \to R$.
	
\end{proof}

\subsection{Hopf superalgebra structure and universal $R$-matrix}
	\label{subs:hssurm}
	We describe how the standard Hopf superalgebra structures associated with each Dynkin diagram are related.
		
	Isomorphisms described in Theorem \ref{th:ltisom} induce Hopf superalgebra structures being twists of type $2$, see Section \ref{sec:chst}. We want to understand how the new Hopf superalgebra structure is related to the standard one defined by equations \eqref{eq:standartHsupalgstrf} - \eqref{eq:standartHsupalgstrs}.
	
	Let $\rho(a_{d_1}) = a_{d_2}$ for $d_1, d_2 \in D$. Consider a generator $\sigma_{\alpha_{i,d_1}}^{d_1} \in \text{Hom}_{\mathcal{W}(\mathcal{R})}(a_{d_1}, a_{d_2})$ \eqref{eq:WCgenel} for an appropriate $\alpha_{i,d_1} \in \tau^{d_1}$ ($i \in I$) and fix the isomorphism $F_q(\sigma_{\alpha_{i,d_1}}^{d_1}) = T_{i, d_1}$ \eqref{th:ltisom}. We put
	\[ J_{\alpha_{i,d_1}} = \sum_{i=0}^{p-1} (-1)^{i} q^{-(x,x) i(i-1)/2} \frac{(q-q^{-1})^{i}}{(i)_{q^{-(x,x)}}!} T_{i,d_1}^{i}(f_{i,d_1}) \otimes T_{i,d_1}^{i}(e_{i,d_1}) = \]
	\[ = exp_{q^{-(x,x)}} ( (q-q^{-1}) T_{i,d_1}(f_{i,d_1}) \otimes T_{i,d_1}(e_{i,d_1}) )^{-1}, \]
	if $|\alpha_{i,d_1}| = 0$;	
	\[ J_{\alpha_{i,d_1}} = 1 \otimes 1 + (q - q^{-1}) f_{i, d_{2}} \otimes e_{i, d_{2}} = exp_{q^{-(x,x)}} ( (q - q^{-1}) f_{i, d_{2}} \otimes e_{i, d_{2}} ), \]
	if $|\alpha_{i,d_1}| = 1$, where $x = \sigma_{\alpha_{i,d_1}}^{d_1} (\alpha_{i,d_1})$.
	
	\begin{lemma}
		\label{lm:twist2}
		$J_{\alpha_{i,d_1}}$ is a twist of type 1 for $(U_q^{d_2})^{T_{i,d_1}}$. Moreover, the twist of type 1 for $(U_q^{d_1})^{T_{i,d_2}^{-}}$ is given by $J_{\alpha_{i,d_2}}^{-} = (T_{i,d_2}^{-} \otimes T_{i,d_2}^{-}) (J_{\alpha_{i,d_1}}^{-1})$.
	\end{lemma}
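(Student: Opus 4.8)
The plan is to verify directly that $J_{\alpha_{i,d_1}}$ satisfies the two defining conditions \eqref{eq:twistdefclf}--\eqref{eq:twistdefcls} of a twist of type $1$ for the Hopf superalgebra $(U_q^{d_2})^{T_{i,d_1}}$, i.e. for the comultiplication $\Delta^{T_{i,d_1}}_{d_2}(w) = (T_{i,d_1}\otimes T_{i,d_1})\circ\Delta_{d_1}(T_{i,d_1}^{-1}(w))$ obtained via Proposition \ref{pr:nsHstr}. The key observation that makes this tractable is that, because $T_{i,d_1}\colon U_q^{d_1}\to U_q^{d_2}$ is an isomorphism of superalgebras, the cocycle identity for $J_{\alpha_{i,d_1}}$ with respect to $\Delta^{T_{i,d_1}}_{d_2}$ is equivalent (by applying $T_{i,d_1}^{-1}$ in each tensor slot throughout) to the corresponding identity for $\tilde J := (T_{i,d_1}^{-1}\otimes T_{i,d_1}^{-1})(J_{\alpha_{i,d_1}})$ with respect to the \emph{standard} comultiplication $\Delta_{d_1}$ on $U_q^{d_1}$. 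So I would first transport the problem across $T_{i,d_1}$ and reduce to checking that $\tilde J$ is a twist of type $1$ for the standard Hopf superalgebra $U_q^{d_1}$.

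Next I would compute $\tilde J$ explicitly. In the even case $|\alpha_{i,d_1}|=0$, pulling back the formula through $T_{i,d_1}^{-1}$ turns $T_{i,d_1}(f_{i,d_1})\otimes T_{i,d_1}(e_{i,d_1})$ into $f_{i,d_1}\otimes e_{i,d_1}$ (up to the Cartan factors recorded in \eqref{eq:ltif} and the sign $(-1)^{|\alpha_{i,d_1}|}$, which in the even case contributes nothing), so that $\tilde J = exp_{q^{-(x,x)}}\bigl((q-q^{-1})\,f_{i,d_1}k_{i,d_1}^{b}\otimes k_{i,d_1}^{-b}e_{i,d_1}\bigr)^{-1}$, a $q$-exponential built from a single root vector pair. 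In the odd case $|\alpha_{i,d_1}|=1$ one gets simply $\tilde J = 1\otimes 1 + (q-q^{-1})\,T_{i,d_1}^{-1}(f_{i,d_2})\otimes T_{i,d_1}^{-1}(e_{i,d_2})$, again a two-term element involving a single odd pair (here the $q$-exponential truncates after two terms because $e_{i}^2=f_{i}^2=0$). In either case $\tilde J$ is, up to the shift by powers of $k_{i,d_1}$, the standard rank-one twist attached to the simple root $\alpha_{i,d_1}$; the required identities \eqref{eq:twistdefclf}--\eqref{eq:twistdefcls} then follow from the $q$-binomial (Cauchy) identity for $q$-exponentials in a quasi-commuting pair, together with the fact that $\Delta_{d_1}(f_{i,d_1})=f_{i,d_1}\otimes k_{i,d_1}^{-1}+1\otimes f_{i,d_1}$ and $\Delta_{d_1}(e_{i,d_1})=e_{i,d_1}\otimes 1+k_{i,d_1}\otimes e_{i,d_1}$ from \eqref{eq:standartHsupalgstrf}. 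The counit condition \eqref{eq:twistdefcls} is immediate from $\epsilon_{d_1}(e_{i,d_1})=\epsilon_{d_1}(f_{i,d_1})=0$ and $\epsilon_{d_1}(k_{i,d_1})=1$.

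For the second assertion, I would use the general compatibility between twists of type $2$ and twists of type $1$: if $\chi$ is an isomorphism of Hopf superalgebras from $V$ to $W$ in $\underline{\text{sAlg}}$ and $\mathcal J$ is a twist for $V$, then $(\chi\otimes\chi)(\mathcal J)$ is a twist for $W$, and transporting a twist through the inverse isomorphism $\chi^{-1}$ yields a twist for the $\chi^{-1}$-twisted structure. Applying this with $\chi = T_{i,d_2}^{-} = (T_{i,d_1})^{-1}$ (the last sentence before Theorem \ref{th:ltisom}) gives that $J_{\alpha_{i,d_2}}^{-} := (T_{i,d_2}^{-}\otimes T_{i,d_2}^{-})(J_{\alpha_{i,d_1}}^{-1})$ is a twist of type $1$ for $(U_q^{d_1})^{T_{i,d_2}^{-}}$; one should double-check that the inverse appearing here is consistent with the $-$ superscript convention, which is exactly the convention fixed by $T_{d_2,d_1}^{-}=(T_{d_1,d_2})^{-1}$ and the symmetric roles of $d_1,d_2$ in the Weyl groupoid.

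The main obstacle I anticipate is the bookkeeping in the even case: one must track the powers $k_{i,d_1}^{\pm b}$ coming from \eqref{eq:ltif}, the sign factors $(-1)^{|x|(|x|+|y|)+\dots}$, and the quasi-commutation relations \eqref{eq:QUSrelf}--\eqref{eq:QUSrels} when verifying that the $q$-exponential is grouplike-like enough for the cocycle identity to close; in particular one needs that $f_{i,d_1}k_{i,d_1}^{b}$ and $k_{i,d_1}^{-b}e_{i,d_1}$ $q^{-(x,x)}$-commute with the appropriate sign so that the Cauchy identity applies in each of the three tensor slots. This is a finite, essentially known computation (it is the rank-one specialization of the standard construction of quasi-$R$-matrices), so I would present it as a direct verification and indicate the $q$-exponential identity being used, without writing out every intermediate product.
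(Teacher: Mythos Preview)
Your approach is sound and matches the paper's in spirit: the paper records the explicit inverses and the formula $J_{\alpha_{i,d_2}}^{-}=(T_{i,d_2}^{-}\otimes T_{i,d_2}^{-})(J_{\alpha_{i,d_1}}^{-1})$ in each parity case and then simply writes ``the result follows from direct computations,'' so your transport-and-$q$-exponential strategy is a legitimate way to organise exactly those computations. Two small corrections are worth making.

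First, in the even case your formula for $\tilde J$ has spurious Cartan factors. Since $\tilde J=(T_{i,d_1}^{-1}\otimes T_{i,d_1}^{-1})(J_{\alpha_{i,d_1}})$ and the argument of the $q$-exponential in $J_{\alpha_{i,d_1}}$ is literally $T_{i,d_1}(f_{i,d_1})\otimes T_{i,d_1}(e_{i,d_1})$, pulling back gives exactly $f_{i,d_1}\otimes e_{i,d_1}$ with no $k^{\pm b}$. (Indeed the paper, computing the same object under the name $J_{\alpha_{i,d_2}}^{-}$, obtains $\exp_{q^{-(x,x)}}((q-q^{-1})\,f_{i,d_1}\otimes e_{i,d_1})$.) This actually simplifies your verification: it is now the standard rank-one Khoroshkin--Tolstoy twist, and your appeal to the $q$-exponential Cauchy identity goes through cleanly.

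Second, your argument for the ``Moreover'' clause does not quite land where you want it. The compatibility principle you invoke, applied with $\chi=T_{i,d_2}^{-}$ to $J_{\alpha_{i,d_1}}$ viewed as a twist for $(U_q^{d_2})^{T_{i,d_1}}$, produces a twist for $\bigl((U_q^{d_2})^{T_{i,d_1}}\bigr)^{T_{i,d_2}^{-}}=U_q^{d_1}$ with its \emph{standard} comultiplication, not for $(U_q^{d_1})^{T_{i,d_2}^{-}}$. The cleanest fix is to repeat your own transport argument symmetrically: $J_{\alpha_{i,d_2}}^{-}$ is a twist for $(U_q^{d_1})^{T_{i,d_2}^{-}}$ iff $(T_{i,d_1}\otimes T_{i,d_1})(J_{\alpha_{i,d_2}}^{-})=J_{\alpha_{i,d_1}}^{-1}$ is a twist for the standard $U_q^{d_2}$, and this is again a rank-one $q$-exponential (the paper records its explicit form) for which the cocycle identity is the same direct computation as before.
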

	\begin{proof}
		First note that $J_{\alpha_{i,d_1}}$ is an even element.
		
		Let $|\alpha_{i,d_1}| = 0$. Then, the inverse is given by		
		\[ J_{\alpha_{i,d_1}}^{-1} = exp_{q^{-(x,x)}} ( (q-q^{-1}) T_{i,d_1}(f_{i,d_1}) \otimes T_{i,d_1}(e_{i,d_1}) ). \]
		Also in this case
		\[ J_{\alpha_{i,d_2}}^{-} = (T_{i,d_2}^{-} \otimes T_{i,d_2}^{-}) (J_{\alpha_{i,d_1}}^{-1}) = exp_{q^{-(x,x)}} ( (q-q^{-1}) f_{i,d_1} \otimes e_{i,d_1} ) \]
		and
		\[ (J_{\alpha_{i,d_2}}^{-})^{-1} = (T_{i,d_2}^{-} \otimes T_{i,d_2}^{-}) (J_{\alpha_{i,d_1}}) = exp_{q^{-(x,x)}} ( (q-q^{-1}) f_{i,d_1} \otimes e_{i,d_1} )^{-1}. \]		
		
		Let $|\alpha_{i,d_1}| = 1$. Then, the inverse is given by
		\[ J_{\alpha_{i,d_1}}^{-1} = exp_{q^{-(x,x)}} ( (-1) (q - q^{-1}) f_{i, d_{2}} \otimes e_{i, d_{2}} ). \]
		Thus we have
		\[ J_{\alpha_{i,d_2}}^{-} = (T_{i,d_2}^{-} \otimes T_{i,d_2}^{-}) (J_{\alpha_{i,d_1}}^{-1}) = exp_{q^{-(x,x)}} ( (q-q^{-1}) T_{i,d_2}^{-}(f_{i,d_2}) \otimes T_{i,d_2}^{-}(e_{i,d_2})  )^{-1} \]
		and
		\[ (J_{\alpha_{i,d_2}}^{-})^{-1} = exp_{q^{-(x,x)}} ( (q-q^{-1}) T_{i,d_2}^{-}(f_{i,d_2}) \otimes T_{i,d_2}^{-}(e_{i,d_2})). \]
		
		We have to check that conditions in Definiton \ref{def:twisttype1} are satisfied. The result follows from direct computations.
		
	\end{proof}
	
	Now we want to show that for the case of a quantized superalgebra $(U_q^{d_2})^{T_{i,d_1}}$ the twisting by the two-tensor $J_{\alpha_{i,d_1}}$ coincides with the $U_q^{d_2}$. We formulate	
	\begin{lemma}
		\label{lm:Hsuptwwistneigh}
		The twisted Hopf superalgebra $((U_{q}^{d_2})^{T_{i, d_1}})^{J_{\alpha_{i,d_1}}}$ coincides with Hopf superalgebra $U_q^{d_2}$.
	\end{lemma}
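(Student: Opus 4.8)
The claim is that applying the type-1 twist $J_{\alpha_{i,d_1}}$ to the Hopf superalgebra $(U_q^{d_2})^{T_{i,d_1}}$ — which is $U_q^{d_2}$ as a superalgebra, but carries the comultiplication transported from $U_q^{d_1}$ via $T_{i,d_1}$ — returns the standard Hopf superalgebra structure \eqref{eq:standartHsupalgstrf}--\eqref{eq:standartHsupalgstrs} on $U_q^{d_2}$. Since both sides share the same underlying superalgebra, counit and antipode conventions are determined by the comultiplication, it suffices to check that
\[
\Delta_{d_2}(u) = J_{\alpha_{i,d_1}}^{-1}\,\bigl((T_{i,d_1}\otimes T_{i,d_1})\circ\Delta_{d_1}\circ T_{i,d_1}^{-1}(u)\bigr)\,J_{\alpha_{i,d_1}}
\]
for $u$ ranging over the algebra generators $k_{l,d_2}^{\pm1}, e_{l,d_2}, f_{l,d_2}$ ($l\in I$). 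Because $\Delta_{d_2}$, the conjugation by $J_{\alpha_{i,d_1}}$, and $(T_{i,d_1}\otimes T_{i,d_1})\circ\Delta_{d_1}\circ T_{i,d_1}^{-1}$ are all algebra homomorphisms $U_q^{d_2}\to U_q^{d_2}\otimes U_q^{d_2}$, verifying the identity on generators proves it in general.

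\textbf{Step 1: reduce to the index $i$ and its neighbours.} Using the explicit formulas \eqref{eq:ltif}--\eqref{eq:ltis} and the Remark giving $T_{i,d_1}^{-1}$, I would split the generators into three groups: (a) $k_{i,d_1}, e_{i,d_1}, f_{i,d_1}$; (b) $k_{j,d_1}, e_{j,d_1}, f_{j,d_1}$ with $|i-j|=1$; (c) $k_{l,d_1}, e_{l,d_1}, f_{l,d_1}$ with $|i-l|\ge2$. For group (c), $T_{i,d_1}$ acts trivially on $e_l,f_l$ and $k_l$, and $J_{\alpha_{i,d_1}}$ is supported on the subalgebra generated by $T_{i,d_1}(e_{i,d_1}), T_{i,d_1}(f_{i,d_1})$, which commutes appropriately with those $e_l,f_l,k_l$; so the twisted coproduct is already the standard one and nothing needs to be done. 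This isolates the genuine computation to the "local" rank-one and rank-two pieces.

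\textbf{Step 2: the local check.} For group (a), since $T_{i,d_1}^{-1}(e_{i,d_2}) = k_{i,d_1}^{-b} f_{i,d_1}$ etc., one rewrites $(T_{i,d_1}\otimes T_{i,d_1})\circ\Delta_{d_1}$ applied to these, obtaining expressions in $f_{i,d_2}, e_{i,d_2}, k_{i,d_2}^{\pm1}$; then one conjugates by $J_{\alpha_{i,d_1}} = \mathrm{exp}_{q^{-(x,x)}}\!\bigl((q-q^{-1})\,T_{i,d_1}(f_{i,d_1})\otimes T_{i,d_1}(e_{i,d_1})\bigr)^{\pm1}$ (or the two-term version in the odd case) and checks the result equals $\Delta_{d_2}$ of the same generator. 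This is precisely the classical Lusztig/Levendorskii--Soibelman computation showing that the braid-group-type operator intertwines the standard coproduct with itself up to the twist by the relevant $q$-exponential; the odd case reduces to the easy identity $\mathrm{exp}_{q^{-(x,x)}}((q-q^{-1})f\otimes e) = 1\otimes1 + (q-q^{-1})f\otimes e$ since $e^2=f^2=0$. Group (b) is handled the same way using the two-sided commutator formulas $T_{i,d_1}(e_{j,d_1}) = -(x,y)[e_{x,d_2},e_{y,d_2}]_{q^z}$ and its $f$-analogue, together with the $q$-Serre relations in $U_q^{d_2}$.

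\textbf{Main obstacle.} The conceptual content is minimal — the statement is essentially the definition of the Lusztig-type isomorphism as a twist — so the real work is bookkeeping: tracking the sign $(-1)^{|x|(|x|+|y|)}$, the Iverson-bracket exponents $[b=\pm1]$, $[b(x,y)=\pm1]$, and the powers of $q$ of the form $q^{(x,x)i(i-1)/2}$ and $q^z$ that appear when one commutes $q$-exponentials past group-like elements $k_{i,d_2}^{\pm b}$. The delicate point is the even case ($|\alpha_{i,d_1}|=0$), where $J_{\alpha_{i,d_1}}$ is an infinite $q$-series truncated at $p-1$ by the relation $e_{i,d_1}^p=f_{i,d_1}^p=0$, and one must use the $q$-exponential identity $\mathrm{exp}_{q^{-(x,x)}}(A)\,\Delta(f)\,\mathrm{exp}_{q^{-(x,x)}}(A)^{-1} = \Delta(f) + (\text{correction})$ valid at a root of unity; I would invoke the PBW basis Theorem~\ref{th:pbwbasis} to guarantee that all identities checked symbolically in $T(L)$ descend faithfully to $U_q^{d_2}$. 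Once these routine (if lengthy) verifications are assembled for all generators and all three groups, the coincidence of the two Hopf superalgebra structures follows, and the counit and antipode match automatically by Proposition~\ref{pr:twistclasHopfsupalg} and Proposition~\ref{pr:nsHstr}.
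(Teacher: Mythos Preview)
Your proposal is correct and follows essentially the same approach as the paper: both reduce the claim to verifying the comultiplication identity $\Delta_{d_2}(h)=J_{\alpha_{i,d_1}}^{-1}\Delta_{d_2}^{T_{i,d_1}}(h)J_{\alpha_{i,d_1}}$ (the paper simply asserts this is ``easy to check'' while you spell out the case split by distance from $i$), and then observe that the antipode is determined by the coproduct so matches automatically via Proposition~\ref{pr:twistclasHopfsupalg}.
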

	\begin{proof}		
		$J_{\alpha_{i,d_1}}$ is the twist of type 1 by Lemma \ref{lm:twist2}.
		
		Now we consider how changes the coproduct and antipode of $(U_q^{d_2})^{T_{i,d_1}}$ after twisting by $J_{\alpha_{i,d_1}}$. It is easy to check that
		\[ \Delta_{d_2} (h) = J_{\alpha_{i,d_1}}^{-1} \Delta_{d_2}^{T_{i, d_1}} (h) J_{\alpha_{i,d_1}} \]
		for all $h \in U_q^{d_2}$. Since an antipode is determined uniquely by a coproduct, we have
		\[ S_{d_2}(h) = U^{-1} S_{d_2}^{T_{i, d_1}} (h) U, \]
		for all $h \in U_q^{d_2}$, where $U= \mu_{U_{q}^{d_2}} \circ (S_{d_2}^{T_{i, d_1}} \otimes id_{U_{q}^{d_2}}) (J_{\alpha_{i,d_1}})$.
		
		The result follows by Proposition \ref{pr:twistclasHopfsupalg}.
	\end{proof}
	
	Let $a_{d_1} = (A_{d_1},\tau^{d_1})$ and $ a_{d_n} = (A_{d_n},\tau^{d_n})$ be arbitrary objects in $\mathcal{W}(\mathcal{R})$ for $d_1, d_n \in D$ ($n \in \mathbb{N}$). It follows from the definition of $\mathcal{W}(\mathcal{R})$ and equations \eqref{eq:WCgenel} - \eqref{eq:WCeqgenl} that there is a morphism $\theta \in \text{Hom}_{\mathcal{W}(\mathcal{R})}(a_{d_1}, a_{d_n})$. Let $\theta = \sigma_{\alpha_{i_{n-1},d_{n-1}}}^{d_{n-1}} ... \sigma_{\alpha_{i_2,d_2}}^{d_{2}} \sigma_{\alpha_{i_1,d_1}}^{d_{1}}$, where $\sigma_{\alpha_{i_k},d_{k}}^{d_{k}} \in \text{Hom}_{\mathcal{W}(\mathcal{R})}(a_{d_{k}}, a_{d_{k+1}})$ ($i_k \in I$, $d_{k} \in D$, $\alpha_{i_k} \in \tau^{d_{k}}$ and $k, n \in \mathbb{N}$). It follows from Corollary \ref{cl:isomclass} that the functor $F_q: \mathcal{W}(\mathcal{R}) \to \underline{\text{sAlg}}$ induces a Lusztig type isomorphism $T_{d_{1}, d_{n}}: U_{q}^{d_1} \to U_{q}^{d_n} $ in $\underline{\text{sAlg}}$ such that $F_q(\theta) = T_{d_{1}, d_{n}}$ and $T_{d_{1}, d_{n}} = T_{i_{n-1}, d_{n-1}} ... T_{i_2, d_{2}} T_{i_1, d_{1}}$. 
	
	Thus we can consider Hopf superalgebra $(U_{q}^{d_1})^{P_{\theta,d_1}}$, which is defined by formula
	
	\begin{equation} \label{theta}
	 (U_{q}^{d_1})^{P_{\theta,d_1}} = (((((((U_{q}^{d_1})^{T_{i_{1},d_1}})^{J_{\alpha_{i_1,d_1}}})^{T_{i_{2},d_2}})^{J_{\alpha_{i_2,d_2}}})^{...})^{T_{i_{n-1},d_{n-1}}})^{J_{\alpha_{i_{n-1},d_{n-1}}}}. 
	\end{equation}
		
	We can summarize our considerations in the
	\begin{theorem}
		\label{th:linkHosalgstrd}
		Hopf superalgebra $ (U_{q}^{d_1})^{P_{\theta,d_1}}$	coincides with the Hopf superalgebra $U_q^{d_{n}}$.
	\end{theorem}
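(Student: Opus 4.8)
The plan is to prove the statement by induction on $n$ (equivalently, on the number $n-1$ of elementary reflections in the fixed decomposition $\theta = \sigma_{\alpha_{i_{n-1},d_{n-1}}}^{d_{n-1}} \cdots \sigma_{\alpha_{i_2,d_2}}^{d_2}\sigma_{\alpha_{i_1,d_1}}^{d_1}$), peeling off the twists from the \emph{inside} of the nested expression \eqref{theta}. The whole point is that each adjacent pair consisting of a twist of type $2$ followed by a twist of type $1$ collapses back to the standard Hopf superalgebra by Lemma \ref{lm:Hsuptwwistneigh}, so that the long composite \eqref{theta} never actually realizes a genuinely iterated twist; the content of the theorem is entirely concentrated in that lemma.

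First I would handle the base case $n=2$. Here $\theta=\sigma_{\alpha_{i_1,d_1}}^{d_1}$, so by \eqref{theta} one has $(U_q^{d_1})^{P_{\theta,d_1}} = \big((U_q^{d_1})^{T_{i_1,d_1}}\big)^{J_{\alpha_{i_1,d_1}}}$. By Proposition \ref{pr:nsHstr} the twist of type $2$ produces a Hopf superalgebra structure carried by the underlying superalgebra $U_q^{d_2}$ (since $T_{i_1,d_1}\colon U_q^{d_1}\to U_q^{d_2}$); by Lemma \ref{lm:twist2} the two-tensor $J_{\alpha_{i_1,d_1}}$ is a twist of type $1$ for it; and by Lemma \ref{lm:Hsuptwwistneigh} the resulting twisted Hopf superalgebra coincides with $U_q^{d_2}$. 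That is exactly the assertion for $n=2$.

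For the inductive step, assume the statement for all morphisms decomposed into at most $n-2$ reflections, and let $\theta$ have $n-1$ reflections. The two innermost layers of \eqref{theta} are precisely $\big((U_q^{d_1})^{T_{i_1,d_1}}\big)^{J_{\alpha_{i_1,d_1}}}$, which equals $U_q^{d_2}$ by the base case. Substituting, \eqref{theta} rewrites as $(U_q^{d_2})^{P_{\theta',d_2}}$ with $\theta' = \sigma_{\alpha_{i_{n-1},d_{n-1}}}^{d_{n-1}} \cdots \sigma_{\alpha_{i_2,d_2}}^{d_2} \in \mathrm{Hom}_{\mathcal{W}(\mathcal{R})}(a_{d_2},a_{d_n})$, a morphism decomposed into $n-2$ reflections. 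By the inductive hypothesis $(U_q^{d_2})^{P_{\theta',d_2}} = U_q^{d_n}$, which finishes the induction.

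The only things that require care are bookkeeping: at the $k$-th stage one applies the relabelling $d_1\mapsto d_k$, $d_2\mapsto d_{k+1}$ in Lemmas \ref{lm:twist2} and \ref{lm:Hsuptwwistneigh}, and this is legitimate only because, by the inductive hypothesis, one reaches stage $k$ sitting on the \emph{standard} Hopf superalgebra $U_q^{d_k}$ with comultiplication \eqref{eq:standartHsupalgstrf}–\eqref{eq:standartHsupalgstrs} — exactly the setting in which those lemmas were established, and also the setting in which $J_{\alpha_{i_k,d_k}}$ is defined via $T_{i_k,d_k}$. A secondary point is that the identification $(U_q^{d_1})^{P_{\theta,d_1}}=U_q^{d_n}$ is independent of the chosen decomposition of $\theta$; this is automatic once the theorem is proved, but it is also consistent with the simple-connectedness of $\mathcal{W}(\mathcal{R})$ and the braid relations \eqref{br1}–\eqref{br5} of Lemma \ref{lm:qbrel} already used to define $T_{d_1,d_n}$. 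There is no analytic obstacle; the argument is simply the inductive packaging of Lemma \ref{lm:Hsuptwwistneigh}, and (if one also wants to track the triangular structure in the braided case) the same induction applies verbatim using the $R$-matrix parts of Propositions \ref{pr:nsHstr} and \ref{pr:twistclasHopfsupalg}.
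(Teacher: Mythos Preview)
Your proof is correct and is exactly the paper's approach: the paper's proof is literally ``The result follows from Lemma~\ref{lm:Hsuptwwistneigh} by induction on $n$,'' and you have carried out precisely that induction, peeling off the innermost pair $(\,\cdot\,)^{T_{i_1,d_1}}\,{}^{J_{\alpha_{i_1,d_1}}}$ via Lemma~\ref{lm:Hsuptwwistneigh} and then invoking the inductive hypothesis on $\theta'$. Your additional remarks on bookkeeping and independence of the decomposition are sound but not needed for the statement as formulated.
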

	\begin{proof}
		The result follows from Lemma \ref{lm:Hsuptwwistneigh} by induction on $n$.
	\end{proof}
	
	We use notations introduced in Section \ref{sec:chst}. It follows from the same arguments as in the \cite{CY12} that $G(U_{q}^{d})$ is generated by the group-like elements $k_{i}$ ($i \in I$) in the Hopf superalgebra $U_{q}^{d}$. To get all Hopf superalgebra isomorphisms of $U_{q}^{d}$ for $d \in D$, we need to describe all skew primitive elements $P_{1,g}(U_{q}^{d})$ and $P_{g,1}(U_{q}^{d})$ ($g \in G(U_{q}^{d})$).
	
	\begin{lemma}
		\label{spel}
		We have
		\begin{equation}
			\label{eqspel1}
			P_{1,g}(U_{q}^{d})=
			\begin{cases}
				\mathbb{Q}(q)(1-g) \oplus \mathbb{Q}(q) e_{i} \oplus \mathbb{Q}(q) g f_{i},& g=k_{i} \; (1 \le i < m+n), \\
				\mathbb{Q}(q)(1-g),& \mbox{otherwise};
			\end{cases}
		\end{equation}
		\begin{equation}
			\label{eqspel2}
			P_{g,1}(U_{q}^{d})=
			\begin{cases}
				\mathbb{Q}(q)(1-g) \oplus \mathbb{Q}(q) f_{i} \oplus \mathbb{Q}(q) e_{i} g,& g=k_{i}^{-1} \; (1 \le i < m+n), \\
				\mathbb{Q}(q)(1-g),& \mbox{otherwise}.
			\end{cases}
		\end{equation}
	\end{lemma}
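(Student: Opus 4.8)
The plan is to determine the skew-primitive elements by a weight-space analysis combined with the PBW basis from Theorem \ref{th:pbwbasis}. First I would use the $\mathbb{Z}[I]$-grading on $U_q^d$ coming from the adjoint action of the group-like elements $k_{i,d}$: every element decomposes into weight components, and since $\Delta_d$ is a graded map (each $k_{i,d}$ is group-like and $e_{i,d}$, $f_{i,d}$ are homogeneous of opposite weights), a $(1,g)$-skew primitive element $c$ with $\Delta_d(c)=c\otimes 1+g\otimes c$ must be a sum of homogeneous skew-primitive elements, and $g$ itself must be group-like, hence $g=k_{g_0,d}$ for some $g_0\in H_0$ by the cited result that $G(U_q^d)$ is generated by the $k_{i,d}$. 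So it suffices to find, for each weight $\gamma\in\mathbb{Z}[I]$ and each group-like $g$, the homogeneous $(1,g)$-skew primitives of weight $\gamma$.

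Next I would handle the trivial weight: if $c$ has weight $0$ then $c\in (U_q^d)^0$ is a linear combination of the $k_{h_0,d}$, and imposing $\Delta_d(c)=c\otimes 1+g\otimes c$ forces (by comparing with $\Delta_d(k_{h_0,d})=k_{h_0,d}\otimes k_{h_0,d}$) that $c$ is a scalar multiple of $1-g$; this accounts for the $\mathbb{Q}(q)(1-g)$ summand in every case. For nonzero weight $\gamma$, the key point is that the $(1,g)$-skew-primitivity condition together with the requirement $(\epsilon\otimes\id)(\Delta_d(c))=c$ forces $g$ to equal the group-like $k_{\gamma,d}$ built from the weight $\gamma$, and then one must analyze when a homogeneous element of weight $\gamma$ (written in the PBW basis $f_{h_-,d}k_{h_0,d}e_{h_+,d}$) can be skew-primitive. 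I would expand $\Delta_d$ on a general weight-$\gamma$ PBW monomial using the (super)-multiplicativity of $\Delta_d$ and the formulas \eqref{eq:standartHsupalgstrf}; the "extra" cross terms in $\Delta_d$ of any monomial of PBW-degree $\ge 2$ (in the $e$'s or $f$'s) cannot cancel, so skew-primitivity of positive weight survives only in the lowest-degree cases: $c$ proportional to $e_{i,d}$ (weight $\alpha_{i,d}$, with $g=k_{i,d}$) or $c$ proportional to $k_{i,d}f_{i,d}$ (also $(1,k_{i,d})$-skew primitive, since $\Delta_d(k_{i,d}f_{i,d})=k_{i,d}f_{i,d}\otimes 1 + k_{i,d}\otimes k_{i,d}f_{i,d}$). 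All other weights give only $\mathbb{Q}(q)(1-g)$. The symmetric computation with $\Delta_d(c)=c\otimes g+1\otimes c$, using that $f_{i,d}$ is $(1,k_{i,d}^{-1})$-skew primitive (from $\Delta_d(f_{i,d})=f_{i,d}\otimes k_{i,d}^{-1}+1\otimes f_{i,d}$) and that $e_{i,d}k_{i,d}^{-1}$ is $(k_{i,d}^{-1},1)$-skew primitive, yields \eqref{eqspel2}.

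The main obstacle will be making rigorous the claim that no higher-degree PBW monomial (or combination thereof in a fixed weight space) can be skew-primitive: this requires controlling $\Delta_d$ on the PBW basis elements $e_{\beta,d}$, $f_{\beta,d}$ for the non-simple roots $\beta$ and showing that the "middle" terms $x_{(1)}\otimes x_{(2)}$ with both tensor factors of nonzero weight are linearly independent of the terms appearing for lower-degree monomials, so that no cancellation across a linear combination is possible. In the rank-two case $sl(2|1)$ this is a short finite check since $\Delta_d^+ = \{\alpha_{1,d},\alpha_{2,d},\alpha_{1,d}+\alpha_{2,d}\}$ has only three positive roots and the degree-$\le 1$ weight spaces are all at most one-dimensional modulo $(U_q^d)^0$-multiples; I would carry out this verification weight-by-weight, using Theorem \ref{th:pbwbasis} to ensure the monomials involved are genuinely linearly independent, and invoke the isomorphism $T_{i,d}$ of Theorem \ref{th:ltisom} to transport the computation between the different Dynkin realizations $d\in D$ so that it need only be done once.
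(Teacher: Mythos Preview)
The paper itself does not prove this lemma; it merely refers to \cite{CY12} and \cite{HW10}. Your approach---decompose into weight spaces via the adjoint action of the $k_{i,d}$, then analyse the coproduct on PBW monomials degree by degree---is exactly the standard argument those references carry out, so in that sense your proposal matches the paper's (deferred) proof.

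Two remarks. First, the intermediate claim that for a homogeneous $(1,g)$-skew primitive $c$ of nonzero weight $\gamma$ the counit axiom forces $g=k_{\gamma,d}$ is not correct: $k_{i,d}f_{i,d}$ has weight $-\alpha_{i,d}$ yet is $(1,k_{i,d})$-skew primitive. The counit conditions yield only $\epsilon(c)=0$; the identification of $g$ emerges from the PBW analysis itself, not from an a priori constraint. This does not damage the argument, since you afterwards list both $e_{i,d}$ and $k_{i,d}f_{i,d}$ with the correct $g=k_{i,d}$. Second, the lemma is stated (and used in the proof of Theorem \ref{th:classHopf}) for general $sl(m|n)$, so the rank-two finite check you propose does not suffice; you need the uniform degree/filtration argument from \cite{CY12}, \cite{HW10} showing that any PBW monomial of total $(e,f)$-degree at least $2$ contributes an uncancellable ``middle'' tensor term in $\Delta_d$. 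Transporting along the Lusztig isomorphisms $T_{i,d}$ lets you move between realisations indexed by $d\in D$, but it does not reduce the rank $m+n-1$.
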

	\begin{proof}
		The proof is similar to that in \cite{CY12}, \cite{HW10}.
	\end{proof}
	
	Fix $d_1 \in D$ and consider the quantum superalgebra $U_{q}^{d_1}$. Now we want to describe quantum superalgebras $U_{q}^{d_2}$ ($d_2 \in D$) that are isomorphic to $U_{q}^{d_1}$ as Hopf superalgebras. We can describe Dynkin diagram associated with $U_{q}^{d_1}$ ($U_{q}^{d_2}$) by the set of simple roots
	\[ \tau^{d_1} = \{ \alpha_{i,d_1} \; | \; i \in I \} \; (\tau^{d_2} = \{ \alpha_{i,d_2} \; | \; i \in I \}). \]
	
	Let $a = (a_{1}, ... , a_{m+n-1}) \in (\mathbb{Q}(q)^{*})^{m+n-1}$. Suppose that $(\alpha_{i,d_1},\alpha_{j,d_1}) = (\alpha_{i,d_2},\alpha_{j,d_2})$ for all $i,j \in I$. Define $\phi_{a}: U_{q}^{d_1} \to U_{q}^{d_2}$ by
	\begin{equation}
		\label{eq:isomgraph1}
		\phi_{a}(k_{i,d_1}) = k_{i,d_2}, \; \phi_{a}(e_{i,d_1}) = a_{i} e_{i,d_2}, \; \phi_{a}(f_{i,d_1}) = a_{i}^{-1} f_{i,d_2}.
	\end{equation}
	
	Suppose that $(\alpha_{i,d_1},\alpha_{j,d_1}) = (\alpha_{m+n-i,d_2},\alpha_{m+n-j,d_2})$ for all $i,j \in I$. We define $\phi^{'}: U_{q}^{d_1} \to U_{q}^{d_2}$ by
	\begin{equation}
		\label{eq:isomgraph3}
		\phi^{'}(k_{i,d_1}) = k_{m+n-i,d_2}, \; \phi^{'}(e_{i,d_1}) = e_{m+n-i,d_2}, \; \phi^{'}(f_{i,d_1}) = f_{m+n-i,d_2}.
	\end{equation}
	
	Suppose that $(\alpha_{i,d_1},\alpha_{j,d_1}) = -(\alpha_{i,d_2},\alpha_{j,d_2})$ for all $i,j \in I$. We define $\phi^{''}: U_{q}^{d_1} \to U_{q}^{d_2}$ by
	\begin{equation}
		\phi^{''}(k_{i,d_1}) = k_{i,d_2}, \; \phi^{''}(e_{i,d_1}) = k_{i,d_2} f_{i,d_2}, \; \phi^{''}(f_{i,d_1}) = e_{i,d_2} k_{i,d_2}^{-1}.
	\end{equation}
	
	Suppose that $(\alpha_{i,d_1},\alpha_{j,d_1}) = -(\alpha_{m+n-i,d_2},\alpha_{m+n-j,d_2})$ for all $i,j \in I$. We define $\phi^{'''}: U_{q}^{d_1} \to U_{q}^{d_2}$ by
	\begin{equation}
		\label{eq:isomgraph2}
		\phi^{'''}(k_{i,d_1}) = k_{m+n-i,d_2}, \; \phi^{'''}(e_{i,d_1}) = k_{m+n-i,d_2} f_{m+n-i,d_2}, \; \phi^{'''}(f_{i,d_1}) = e_{m+n-i,d_2} k_{m+n-i,d_2}^{-1}.
	\end{equation}
		
	\begin{theorem}
		\label{th:sufcondaut}
		Let $\phi_{a}$, $\phi^{'}$, $\phi^{''}$ and $\phi^{'''}$ be as above. Then these maps define Hopf superalgebra isomorphisms.
	\end{theorem}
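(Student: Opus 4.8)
For each of the four maps I would check the same three things: that it is a well-defined morphism in $\underline{\text{sAlg}}$ (in particular even, i.e.\ parity-preserving), that it intertwines comultiplication and counit (hence, being a bialgebra morphism, automatically the antipode), and that it is bijective. The Hopf part is a short calculation on the generators $k_{i,d_1},e_{i,d_1},f_{i,d_1}$ from \eqref{eq:standartHsupalgstrf}--\eqref{eq:standartHsupalgstrs}, using that each map is an algebra morphism sending $k_{i,d_1}$ to a group-like element and $e_{i,d_1},f_{i,d_1}$ to skew-primitive elements; for $\phi_a$ and $\phi'$ this is immediate, and for $\phi''$ one checks e.g.\ $\Delta(k_{i,d_2}f_{i,d_2})=k_{i,d_2}f_{i,d_2}\otimes 1+k_{i,d_2}\otimes k_{i,d_2}f_{i,d_2}=(\phi''\otimes\phi'')\Delta_{d_1}(e_{i,d_1})$, and similarly for $f_{i,d_1}$ and $\phi'''$. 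Bijectivity is obtained by exhibiting an inverse of the same shape going back from $U_q^{d_2}$ to $U_q^{d_1}$ (for instance $\phi_a^{-1}=\phi_{(a_1^{-1},\dots,a_{m+n-1}^{-1})}$, and the evident order-reversed/Chevalley-type formulas for $\phi'$, $\phi''$, $\phi'''$) and checking on generators that it is a two-sided inverse.

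The substantive point is well-definedness, i.e.\ compatibility of the generator assignments with the defining relations \eqref{eq:QUSrelf}--\eqref{eq:QUSrels} and \eqref{eq:QUSrelt}--\eqref{eq:QUSrelfo} of $U_q^{d_1}$. First I would note that $\phi'$ (resp.\ $\phi'''$) is obtained from $\phi_{(1,\dots,1)}$ (resp.\ $\phi''$) by the order-reversing relabeling $i\mapsto m+n-i$ of the second diagram's data; since this relabeling preserves $|i-j|$ (hence which Serre relations occur) and, by the hypotheses on the forms, preserves each $(\alpha_{i,d},\alpha_{j,d})$ and therefore the parity of $\alpha_{i,d}$ (which for type $A$ is detected by whether $(\alpha_{i,d},\alpha_{i,d})$ vanishes), it is enough to treat $\phi_a$ and $\phi''$. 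For $\phi_a$ one observes that every defining relation is homogeneous for the $\mathbb{Z}^{I}$-weight grading in which $e_{i,d},f_{i,d}$ have weights $\pm\alpha_{i,d}$ and $k_{i,d}$ has weight $0$, so the scalars $a_i$ cancel on both sides of each relation; combined with $(\alpha_{i,d_1},\alpha_{j,d_1})=(\alpha_{i,d_2},\alpha_{j,d_2})$, which matches every $q$-power occurring in the relations, this gives that $\phi_a$ is a superalgebra morphism.

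For $\phi''$ the hypothesis $(\alpha_{i,d_1},\alpha_{j,d_1})=-(\alpha_{i,d_2},\alpha_{j,d_2})$ is exactly what is needed for the commutation relations among the $k$'s and the $e$'s and $f$'s: applying $\phi''$ to $e_{i,d_1}k_{j,d_1}=q^{-(\alpha_{j,d_1},\alpha_{i,d_1})}k_{j,d_1}e_{i,d_1}$ and moving $k_{j,d_2}$ past $f_{i,d_2}$ inside $U_q^{d_2}$ reduces to the identity $q^{(\alpha_{j,d_2},\alpha_{i,d_2})}=q^{-(\alpha_{j,d_1},\alpha_{i,d_1})}$, true by hypothesis, and the $k$--$f$ relations are symmetric. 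The remaining relations --- $[e_{i,d_1},f_{j,d_1}]_1$, the quadratic relations for odd roots, the two-sided degree-three quantum Serre relations, and the special four-term and distinguished degree-three Serre relations of type $A$ --- are then checked by direct manipulation in $U_q^{d_2}$, using that $\phi''$ interchanges the ``$e$-side'' and ``$f$-side'' relations and turns a bracket $[\,\cdot\,,\cdot\,]_{q^{s}}$ into one of the form $[\,\cdot\,,\cdot\,]_{q^{-s}}$, which under the sign flip is again one of the defining brackets of $U_q^{d_2}$. I expect this last bookkeeping for $\phi''$ and $\phi'''$ to be the main obstacle: one must track the parity signs $(-1)^{|x||y|}$, the $q$-powers, and the extra group-like factors $k_{i,d_2}^{\pm1}$ introduced by $e_{i,d_1}\mapsto k_{i,d_2}f_{i,d_2}$ through the $[e_i,f_i]$ relation and the nested-bracket Serre relations. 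The proofs of Theorems \ref{th:suplieclasisom}, \ref{th:ltisom} and the PBW argument of Theorem \ref{th:pbwbasis} supply the template for organizing these computations, but the Chevalley-type inversion makes the verification for $\phi''$, $\phi'''$ genuinely more delicate than for $\phi_a$, $\phi'$.
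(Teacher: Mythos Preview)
Your proposal is correct and follows the same route as the paper: the paper's own proof simply notes that the maps are $\mathbb{Z}_2$-grading-preserving and even, and then defers everything else to ``direct computations''. What you have written is exactly a detailed outline of those computations---checking well-definedness against the defining relations \eqref{eq:QUSrelf}--\eqref{eq:QUSrels} and \eqref{eq:QUSrelt}--\eqref{eq:QUSrelfo}, verifying the Hopf structure on generators via \eqref{eq:standartHsupalgstrf}--\eqref{eq:standartHsupalgstrs}, and exhibiting explicit inverses---so there is no difference in approach, only in the level of detail you supply.
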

	\begin{proof}
		It follows immediately from the definition that $\phi_{a}$, $\phi^{'}$, $\phi^{''}$ and $\phi^{'''}$ are $\mathbb{Z}_{2}$-grading and even maps. The result follows from the direct computations.
	\end{proof}
	
	Compare the following result with Theorem 4 in \cite{CY12}.
	
	\begin{theorem}
		\label{th:classHopf}
		Let $U_{q}^{d_1}$ and $U_{q}^{d_2}$ be Hopf superalgebras associated with Dynkin diagrams $d_1$ and $d_2 \in D$ resprectively. Then there exists an isomorphism $\phi \in \text{Hom}_{\underline{\text{sHAlg}}}(U_{q}^{d_{1}},U_{q}^{d_{2}})$ if and only if $d_1$ and $d_2$ are isomorphic as graphs and
		\begin{enumerate}
			\item $\phi = \phi_{a}$, if $(\alpha_{i,d_1},\alpha_{j,d_1}) = (\alpha_{i,d_2},\alpha_{j,d_2})$ for all $i,j \in I$; \label{cd:hsisom1}
			\item $\phi = \phi^{'} \circ \phi_{a}$, if $(\alpha_{i,d_1},\alpha_{j,d_1}) = (\alpha_{m+n-i,d_2},\alpha_{m+n-j,d_2})$ for all $i,j \in I$; \label{cd:hsisom2}
			\item $\phi = \phi^{''} \circ \phi_{a}$, if $(\alpha_{i,d_1},\alpha_{j,d_1}) = -(\alpha_{i,d_2},\alpha_{j,d_2})$ for all $i,j \in I$; \label{cd:hsisom3}
			\item $\phi = \phi^{'''} \circ \phi_{a}$, if $(\alpha_{i,d_1},\alpha_{j,d_1}) = -(\alpha_{m+n-i,d_2},\alpha_{m+n-j,d_2})$ for all $i,j \in I$. \label{cd:hsisom4}
		\end{enumerate}
	\end{theorem}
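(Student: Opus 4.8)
The plan is to establish the two implications separately, the substantial one being ``only if''. For ``if'', I would observe that in case~(\ref{cd:hsisom1}) the map $\phi_a$, and in cases~(\ref{cd:hsisom2})--(\ref{cd:hsisom4}) the maps $\phi'\circ\phi_a$, $\phi''\circ\phi_a$, $\phi'''\circ\phi_a$, are composites of maps shown to be Hopf superalgebra isomorphisms in Theorem~\ref{th:sufcondaut}; since a composite of Hopf superalgebra isomorphisms is again one, each of these gives the required $\phi\in\mathrm{Hom}_{\underline{\mathrm{sHAlg}}}(U_q^{d_1},U_q^{d_2})$, the matching of sources and targets with the stated relation between the bilinear forms being immediate from \eqref{eq:isomgraph1}--\eqref{eq:isomgraph2}.

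For the converse, let $\phi$ be a Hopf superalgebra isomorphism $U_q^{d_1}\to U_q^{d_2}$. Since $\phi$ sends group-likes to group-likes and skew-primitives to skew-primitives, and $e_{i,d_1}$ is $(1,k_{i,d_1})$-skew-primitive, the element $\phi(e_{i,d_1})$ is a nonzero $(1,\phi(k_{i,d_1}))$-skew-primitive that is nilpotent ($e_{i,d_1}^{p}=0$ or $e_{i,d_1}^{2}=0$ according to the parity of $\alpha_{i,d_1}$). As $G(U_q^{d_2})$ is generated by the $k_{i,d_2}$ and $\mathbb{Q}(q)[G(U_q^{d_2})]$ is reduced (characteristic zero), Lemma~\ref{spel} forces $\phi(k_{i,d_1})=k_{\pi(i),d_2}$ for a map $\pi\colon I\to I$, which is a bijection by the same argument applied to $\phi^{-1}$; then $\phi(e_{i,d_1})=c_0(1-k_{\pi(i),d_2})+c_1e_{\pi(i),d_2}+c_2k_{\pi(i),d_2}f_{\pi(i),d_2}$ for scalars $c_0,c_1,c_2$ depending on $i$, and likewise for $\phi(f_{i,d_1})$.

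The step I expect to be the main obstacle is reducing these three terms to one. I would decompose $\phi(e_{i,d_1})$ into eigenvectors for conjugation by $\phi(k_{i,d_1})=k_{\pi(i),d_2}$: the three summands above have eigenvalues $1$, $q^{(\alpha_{\pi(i),d_2},\alpha_{\pi(i),d_2})}$ and $q^{-(\alpha_{\pi(i),d_2},\alpha_{\pi(i),d_2})}$, whereas $\phi(e_{i,d_1})$ must be a single eigenvector of eigenvalue $q^{(\alpha_{i,d_1},\alpha_{i,d_1})}$. Since $q$ has odd order $p\ge3$ and simple-root norms lie in $\{2,-2,0\}$, when $\pi(i)$ is white these eigenvalues are pairwise distinct, leaving exactly one summand, and the $(1-k)$-summand is excluded because its eigenvalue $1$ differs from $q^{(\alpha_{i,d_1},\alpha_{i,d_1})}$; when $\pi(i)$ is grey, parity kills the even summand $1-k_{\pi(i),d_2}$ and the relation $e_{i,d_1}^2=0$ kills one of $c_1,c_2$; and the options ``$\pi(i)$ grey, $i$ white'' (or the reverse) are ruled out by the same eigenvalue comparison together with $\phi(e_{i,d_1})\ne0$. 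Hence $\phi(e_{i,d_1})=a_ie_{\pi(i),d_2}$ or $a_ik_{\pi(i),d_2}f_{\pi(i),d_2}$ with $a_i\in\mathbb{Q}(q)^{*}$, $\pi$ preserves colors, the first option forcing $(\alpha_{i,d_1},\alpha_{i,d_1})=(\alpha_{\pi(i),d_2},\alpha_{\pi(i),d_2})$ and the second $(\alpha_{i,d_1},\alpha_{i,d_1})=-(\alpha_{\pi(i),d_2},\alpha_{\pi(i),d_2})$; the relation $[e_{i,d_1},f_{i,d_1}]_1=(k_{i,d_1}-k_{i,d_1}^{-1})/(q-q^{-1})$ then determines $\phi(f_{i,d_1})$. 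Finally, feeding the two options into the quantum Serre relations and using the PBW basis of Theorem~\ref{th:pbwbasis}, one checks that the ``$e$''-option at one vertex forces it at every neighbour (a mixed choice sends a Serre relation to a nonzero PBW element), so by connectedness of the diagram one option, indexed by $\varepsilon=\pm1$, holds throughout.

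To conclude, applying $\phi$ to $e_{i,d_1}k_{j,d_1}=q^{-(\alpha_{j,d_1},\alpha_{i,d_1})}k_{j,d_1}e_{i,d_1}$ yields $(\alpha_{i,d_1},\alpha_{j,d_1})=\varepsilon(\alpha_{\pi(i),d_2},\alpha_{\pi(j),d_2})$ for all $i,j\in I$, so $\pi$ is an isomorphism of the colored graphs of $d_1$ and $d_2$; in particular $d_1$ and $d_2$ are isomorphic as graphs, and since the Dynkin diagram of $sl(m|n)$ is a line, $\pi$ is either the identity on positions or the flip $i\mapsto m+n-i$. The four combinations of this choice with $\varepsilon=\pm1$ are precisely conditions (\ref{cd:hsisom1})--(\ref{cd:hsisom4}); in each case $\phi$ factors as one of $\phi'$, $\phi''$, $\phi'''$ (or the identity, in case (\ref{cd:hsisom1})) composed with a Hopf superalgebra automorphism fixing each $k_i$ and rescaling $e_i\mapsto a_ie_i$, $f_i\mapsto a_i^{-1}f_i$, i.e.\ a map of the form $\phi_a$. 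Thus $\phi$ has the asserted form; this parallels Theorem~4 of \cite{CY12}.
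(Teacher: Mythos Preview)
Your overall strategy matches the paper's closely: both exploit Lemma~\ref{spel} to write $\phi(e_{i,d_1})$ as a three-term combination, use conjugation by group-likes to cut this down to a single term of type $e_{\pi(i)}$ or $k_{\pi(i)}f_{\pi(i)}$, argue that the choice is uniform along the diagram, and then read off the bilinear-form relation and the permutation $\pi\in\{\mathrm{id},\,i\mapsto m+n-i\}$. The sufficiency direction via Theorem~\ref{th:sufcondaut} is identical.

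The one genuine soft spot is your propagation step. You invoke the quantum Serre relations to force the same option at neighbouring vertices, but this does not work as stated. For two adjacent \emph{grey} vertices $i,j$ there is no binary Serre relation at all (the relevant relation involves three consecutive indices). And even for white $i$, if $\pi(i),\pi(j)$ happen to be adjacent, applying $\phi$ with the mixed choice $\phi(e_i)=a_ie_{\pi(i)}$, $\phi(e_j)=b_jk_{\pi(j)}f_{\pi(j)}$ to the Serre relation $[e_i,[e_i,e_j]_{q^{\pm1}}]_{q^{\mp1}}=0$ still yields zero: after commuting $e_{\pi(i)}$ past $k_{\pi(j)}$ and $f_{\pi(j)}$ one is left with $k_{\pi(j)}f_{\pi(j)}e_{\pi(i)}^2$ times $(q^{-s}-q)(q^{-s}-q^{-1})$ with $s=(\alpha_{\pi(i)},\alpha_{\pi(j)})=\pm1$, which vanishes. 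So the Serre relations alone do not exclude mixed choices.

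The remedy is exactly the device you already deploy in your last paragraph, used earlier. Applying $\phi$ to $k_{j,d_1}e_{i,d_1}k_{j,d_1}^{-1}=q^{(\alpha_{i,d_1},\alpha_{j,d_1})}e_{i,d_1}$ and to the same relation with $i,j$ swapped gives
\[
(\alpha_{\pi(i),d_2},\alpha_{\pi(j),d_2})\equiv \varepsilon_i\,(\alpha_{i,d_1},\alpha_{j,d_1})\pmod p,\qquad
(\alpha_{\pi(i),d_2},\alpha_{\pi(j),d_2})\equiv \varepsilon_j\,(\alpha_{i,d_1},\alpha_{j,d_1})\pmod p,
\]
where $\varepsilon_i=+1$ for the $e$-option at $i$ and $\varepsilon_i=-1$ for the $kf$-option. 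A mixed choice at adjacent $i,j$ forces $2(\alpha_{i,d_1},\alpha_{j,d_1})\equiv0\pmod p$, impossible for $(\alpha_{i,d_1},\alpha_{j,d_1})=\pm1$ and $p$ odd. This is precisely how the paper propagates from one vertex to its neighbours (its Cases~1 and~2); it replaces your Serre step with no extra work and simultaneously gives the adjacency-preservation of $\pi$ that you need afterwards.
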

	\begin{proof}
		For any $i \in I$, we have
		\[ \Delta_{d_2}(\phi(e_{i,d_1})) = (\phi \otimes \phi) \Delta_{d_1}(e_{i,d_1}) = \phi( e_{i,d_1} ) \otimes 1 + \phi( k_{i,d_1} ) \otimes \phi( e_{i,d_1} ). \]
		It follows from the definition of the quantum superalgebra in Section \ref{subs:dqsru} that there are no other invertible elements in $U_{q}^{d}$ except for that in $G(U_{q}^{d})$ for $d \in D$. Thus $\phi(k_{i,d_1}) \in G(U_{q}^{d_2})$. Moreover, as $\phi$ is an isomorphism, we have from \eqref{eqspel1} that there exists some $\bar{i} = \rho(i) \in I$ such that $\phi(k_{i,d_1}) = k_{\bar{i},d_2}$ for $\rho: I \to I$. It is clear that $\rho$ is bijective.
		By Lemma \ref{spel} we have
		\[ \phi(e_{i,d_1}) = a_{i} e_{\bar{i}, d_2} + b_{i} k_{\bar{i}, d_2} f_{\bar{i},d_2} + c_{i} (1-k_{\bar{i}, d_2}), \]
		for suitable $a_{i}, b_{i}, c_{i} \in \mathbb{Q}(q)$. Note that $e_{\bar{i}, d_2}$, $k_{\bar{i}, d_2} f_{\bar{i},d_2}$ and $1-k_{\bar{i}, d_2}$ are linear independent.
		
		Recall that $q^2 \ne 1$. Consider the equation
		\begin{equation}
			\label{eqThf1}
			\phi( k_{i,d_1} e_{j,d_1} k_{i,d_1}^{-1} ) = q^{(\alpha_{i,d_1},\alpha_{j,d_1})} \phi(e_{j,d_1}).
		\end{equation}
		We have
		\begin{equation}
			\label{eqThf2}
			0 = a_{j} ( q^{(\alpha_{\bar{i},d_2},\alpha_{\bar{j},d_2})} - q^{(\alpha_{i,d_1},\alpha_{j,d_1})} ) = b_{j} ( q^{-(\alpha_{\bar{i},d_2},\alpha_{\bar{j},d_2})} - q^{(\alpha_{i,d_1},\alpha_{j,d_1})} ) = c_{j} (1 - q^{(\alpha_{i,d_1},\alpha_{j,d_1})}).
		\end{equation}
		Let $i \ne j$. Note that $(\alpha_{\bar{i},d_2},\alpha_{\bar{j},d_2}) = 0, 1 \mbox{ or } -1$. There is at least one odd vertex in Dynkin diagram. Let $|e_{j,d_1}| = 1$ and $|i-j|=1$. Then $|(\alpha_{i,d_1},\alpha_{j,d_1})| = |(\alpha_{\bar{i},d_2},\alpha_{\bar{j},d_2})| = 1$. Thus $c_{j} = 0$ and $a_{j} \ne 0$, $b_{j} = 0$ or $a_{j} = 0$, $b_{j} \ne 0$. Thus if $|e_{j,d_1}| = 1$
		\[ \phi(e_{j,d_1}) = a_{j} e_{\bar{j},d_2} \mbox{ or } \phi(e_{j,d_1}) = b_{j} k_{\bar{j},d_2} f_{\bar{j},d_2}. \]
		Moreover,
		\begin{equation}
			\label{eqThf3}
			\phi(e_{j,d_1}^2) = a_{j}^2 e_{\bar{j},d_2}^2 = 0 \mbox{ or } \phi(e_{j,d_1}) = q^{(\alpha_{\bar{j},d_2}, \alpha_{\bar{j},d_2})} b_{j}^2 k_{\bar{j},d_2}^2 f_{\bar{j},d_2}^2 = 0.
		\end{equation}
		It follows that $\phi(e_{j,d_1})$ is an odd generator. Denote by $I = I_{\bar{0},d_1} \cup I_{\bar{1},d_1} = I_{\bar{0},d_2} \cup I_{\bar{1},d_2}$ the sets of even and odd indices of generators in $U_{q}^{d_{1}}$ and $U_{q}^{d_{2}}$ respectively. Then we get from \eqref{eqThf3} that $|I_{\bar{1},d_2}| \ge |I_{\bar{1},d_1}|$. As $\phi$ is the isomorphism using the same argument, we have $|I_{\bar{1},d_1}| \ge |I_{\bar{1},d_2}|$. Thus $|I_{\bar{1},d_1}| = |I_{\bar{1},d_2}| \Rightarrow |I_{\bar{0},d_1}| = |I_{\bar{0},d_2}|$.
		
		Consider the equation \eqref{eqThf1} and let $i = j$ and $|e_{j,d_1}| = 0$. It follows that $\phi(e_{j,d_1})$ is an even generator and $|(\alpha_{i,d_1},\alpha_{j,d_1})| = |(\alpha_{\bar{i},d_2},\alpha_{\bar{j},d_2})| = 2$. Then we get from the equation \eqref{eqThf2} that if $|e_{j,d_1}| = 0$
		\[ \phi(e_{j,d_1}) = a_{j} e_{\bar{j},d_2} \mbox{ or } \phi(e_{j,d_1}) = b_{j} k_{\bar{j},d_2} f_{\bar{j},d_2}. \]
		
		Case 1. Fix $j \in I$ such that $|e_{j,d_1}| = 1$ and let $\phi(e_{j,d_1}) = a_{j} e_{\bar{j},d_2}$. We claim that $\forall i \in I$, we have $\phi(e_{i,d_1}) = a_{i} e_{\bar{i},d_2}$ and $\rho(i) = i$ or $\rho(i) = m+n-i$.
		
		Consider the equations
		\[ \phi( k_{i,d_1} e_{j,d_1} k_{i,d_1}^{-1} ) = q^{(\alpha_{i,d_1},\alpha_{j,d_1})} \phi(e_{j,d_1}), \; k_{\bar{i},d_2} a_{j} e_{\bar{j},d_2} k_{\bar{i},d_2}^{-1} = q^{(\alpha_{\bar{i},d_2},\alpha_{\bar{j},d_2})} a_{j} e_{\bar{j},d_2}. \]
		Thus $\forall i \in I$
		\[ q^{(\alpha_{\bar{i},d_2},\alpha_{\bar{j},d_2})} = q^{(\alpha_{i,d_1},\alpha_{j,d_1})}, \; (\alpha_{\bar{i},d_2},\alpha_{\bar{j},d_2}) = (\alpha_{i,d_1},\alpha_{j,d_1}). \]
		Moreover, if $|i-j|=1$, we have  $(\alpha_{i,d_1},\alpha_{j,d_1}) \ne 0$ and  $\bar{i} = \bar{j} -1$ or $\bar{i} = \bar{j} +1$.
		
		It follows from
		\[ \phi( k_{j,d_1} e_{i,d_1} k_{j,d_1}^{-1} ) = q^{(\alpha_{i,d_1},\alpha_{j,d_1})} \phi(e_{i,d_1}) \]
		that for $|i-j|=1$
		\[ 0 = a_{i} ( q^{(\alpha_{\bar{i},d_2},\alpha_{\bar{j},d_2})} - q^{(\alpha_{i,d_1},\alpha_{j,d_1})} ) = b_{i} ( q^{-(\alpha_{\bar{i},d_2},\alpha_{\bar{j},d_2})} - q^{(\alpha_{i,d_1},\alpha_{j,d_1})} ). \]
		Hence, $\phi(e_{i,d_1}) = a_{i} e_{\bar{i},d_2}$.
		
		Now we suppose that $\bar{i} = \bar{j} - 1$. By the same argument $\overline{i-1} = \bar{i} - 1$ or $\overline{i-1} = \bar{i} + 1 = \bar{j}$. But $\rho$ is the bijection and this implies that $\overline{i-1} = \bar{i} - 1$.	Thus we use induction on $r$ to prove that $\phi(e_{r,d_1}) = a_{i} e_{\bar{r},d_2}$ in the case $1 \le r \le j$; the case $j + 1 \le r \le m+n-1$ is similar. Using the same arguments as before it is easy to see that the result follows. Moreover, $\forall i \in I$ as
		\[ \rho(1) < \rho(2) < ... < \rho(j) < ... < \rho(m+n-1) \]
		we have $\rho(i) = i$.
		
		Now we suppose that $\bar{i} = \bar{j} + 1$.  By the same argument, we have $\overline{i-1} = \bar{i} + 1$. Thus we use induction on $r$ to prove that $\phi(e_{r,d_1}) = a_{i} e_{\bar{r},d_2}$ in the case $1 \le r \le j$; the case $j + 1 \le r \le m+n-1$ is similar. Using the same arguments as before it is easy to see that the result follows. Moreover, $\forall i \in I$ as
		\[ \rho(1) > \rho(2) > ... > \rho(j) > ... > \rho(m+n-1) \]
		we have $\rho(i) = m+n-i$.
		
		Case 2. Fix $j \in I$ such that $|e_{j,d_1}| = 1$ and let $\phi(e_{j,d_1}) = b_{j} k_{\bar{j},d_2} f_{\bar{j},d_2}$. We claim that $\forall i \in I$, we have $\phi(e_{i,d_1}) = b_{i} k_{\bar{i},d_2} f_{\bar{i},d_2}$ and $\rho(i) = i$ or $\rho(i) = m+n-i$.
		
		Consider the equation
		\[ \phi( k_{i,d_1} e_{j,d_1} k_{i,d_1}^{-1} ) = q^{(\alpha_{i,d_1},\alpha_{j,d_1})} \phi(e_{j,d_1}), \; k_{\bar{i},d_2} b_{j} k_{\bar{j},d_2} f_{\bar{j},d_2} k_{\bar{i},d_2}^{-1} = q^{-(\alpha_{\bar{i},d_2},\alpha_{\bar{j},d_2})} b_{j} k_{\bar{j},d_2} f_{\bar{j},d_2}. \]
		Thus $\forall i \in I$
		\[ q^{(\alpha_{i,d_1},\alpha_{j,d_1})} = q^{-(\alpha_{\bar{i},d_2},\alpha_{\bar{j},d_2})}, \; (\alpha_{i,d_1},\alpha_{j,d_1}) = -(\alpha_{\bar{i},d_2},\alpha_{\bar{j},d_2}). \]
		Moreover, if $|i-j|=1$, we have  $(\alpha_{i,d_1},\alpha_{j,d_1}) \ne 0$ and  $\bar{i} = \bar{j} -1$ or $\bar{i} = \bar{j} +1$.
		
		It follows from
		\[ \phi( k_{j,d_1} e_{i,d_1} k_{j,d_1}^{-1} ) = q^{(\alpha_{i,d_1},\alpha_{j,d_1})} \phi(e_{i,d_1}) \]
		that for $|i-j|=1$
		\[ 0 = a_{i} ( q^{(\alpha_{\bar{i},d_2},\alpha_{\bar{j},d_2})} - q^{(\alpha_{i,d_1},\alpha_{j,d_1})} ) = b_{i} ( q^{-(\alpha_{\bar{i},d_2},\alpha_{\bar{j},d_2})} - q^{(\alpha_{i,d_1},\alpha_{j,d_1})} ). \]
		Hence, $\phi(e_{i,d_1}) = b_{i} k_{\bar{i},d_2} f_{\bar{i},d_2}$.
		
		Now we suppose that $\bar{i} = \bar{j} - 1$. By the same argument $\overline{i-1} = \bar{i} - 1$ or $\overline{i-1} = \bar{i} + 1 = \bar{j}$. But $\rho$ is the bijection and this implies that $\overline{i-1} = \bar{i} - 1$.	Thus we use induction on $r$ to prove that $\phi(e_{r,d_1}) = b_{r} k_{\bar{r},d_2} f_{\bar{r},d_2}$ in the case $1 \le r \le j$; the case $j + 1 \le r \le m+n-1$ is similar. Using the same arguments as before it is easy to see that the result follows. Moreover, $\forall i \in I$ as
		\[ \rho(1) < \rho(2) < ... < \rho(j) < ... < \rho(m+n-1) \]
		we have $\rho(i) = i$.
		
		Now we suppose that $\bar{i} = \bar{j} + 1$.  By the same argument, we have $\overline{i-1} = \bar{i} + 1$. Thus we use induction on $r$ to prove that $\phi(e_{r,d_1}) = b_{r} k_{\bar{r},d_2} f_{\bar{r},d_2}$ in the case $1 \le r \le j$; the case $j + 1 \le r \le m+n-1$ is similar. Using the same arguments as before it is easy to see that the result follows. Moreover, $\forall i \in I$ as
		\[ \rho(1) > \rho(2) > ... > \rho(j) > ... > \rho(m+n-1) \]
		we have $\rho(i) = m+n-i$.
		
		Consider the equation
		\[\phi( k_{i,d_1} f_{j,d_1} k_{i,d_1}^{-1} ) = q^{-(\alpha_{i,d_1},\alpha_{j,d_1})} \phi(f_{j,d_1}).  \]
		We use \eqref{eqspel2} to obtain that $\phi(f_{j,d_1}) = a_{j}^{'} f_{j,d_2} $ or $\phi(f_{j,d_1}) = a_{j}^{'} f_{m+n-j,d_2} $ in Case 1 and $\phi(f_{j,d_1}) = b_{j}^{'} e_{j, d_2} k_{j,d_2}^{-1}$ or $\phi(f_{j,d_1}) = b_{j}^{'} e_{m+n-j, d_2} k_{m+n-j,d_2}^{-1}$ in Case 2.
		
		Finally, consider the equation
		\[ \phi( e_{i,d_1} f_{j,d_1} - (-1)^{|\alpha_{i,d_1}| |\alpha_{j,d_1}|} f_{j,d_1} e_{i,d_1} ) =  \delta_{i,j} \phi( \frac{k_{i,d_1} - k_{i,d_1}^{-1}}{q-q^{-1}} ). \]
		We obtain that $a_{i}^{'} = a_{i}^{-1}$ and $b_{i}^{'} = b_{i}^{-1}$ for all $i \in I$.
		
		This impiles that if there exists an isomorphism $\phi \in \text{Hom}_{\underline{\text{sHAlg}}}(U_{q}^{d_{1}},U_{q}^{d_{2}})$ then Dynkin diagrams associated with $U_{q}^{d_{1}}$ and $U_{q}^{d_{2}}$ are isomorphic as graphs and
		
		\begin{enumerate}
			\item $\phi = \phi_{a}$, if $(\alpha_{i,d_1},\alpha_{j,d_1}) = (\alpha_{i,d_2},\alpha_{j,d_2})$ for all $i,j \in I$;
			\item $\phi = \phi^{'} \circ \phi_{a}$, if $(\alpha_{i,d_1},\alpha_{j,d_1}) = (\alpha_{m+n-i,d_2},\alpha_{m+n-j,d_2})$ for all $i,j \in I$;
			\item $\phi = \phi^{''} \circ \phi_{a}$, if $(\alpha_{i,d_1},\alpha_{j,d_1}) = -(\alpha_{i,d_2},\alpha_{j,d_2})$ for all $i,j \in I$;
			\item $\phi = \phi^{'''} \circ \phi_{a}$, if $(\alpha_{i,d_1},\alpha_{j,d_1}) = -(\alpha_{m+n-i,d_2},\alpha_{m+n-j,d_2})$ for all $i,j \in I$.
		\end{enumerate}
	
		The sufficience follows from Theorem \ref{th:sufcondaut}.
		
	\end{proof}

	Let $\text{Aut}_{\underline{\text{sHAlg}}}(U_{q}^{d})$ ($d \in D$) be the group of Hopf superalgebra automorphisms of $U_{q}^{d}$ with the associated Dynkin diagram described by the set of simple roots $ \tau^d = \{ \alpha_{i,d} \; | \; i \in I \}$. Consider the conditions imposed on roots described in the formulation of Theorem \ref{th:classHopf}. Also recall that $q$ is a root of unity of odd order $p$.
		
	\begin{corollary}		
		\begin{enumerate}
			\item $\text{Aut}_{\underline{\text{sHAlg}}}(U_{q}^{d}) \cong \mathbb{Z} / 2 \mathbb{Z} \times (\mathbb{Q}(q)^{*})^{m+n-1}$, if the condition \ref{cd:hsisom2} is satisfied; \label{cd:aut1}
			\item $\text{Aut}_{\underline{\text{sHAlg}}}(U_{q}^{d}) \cong \mathbb{Z} / 2p \mathbb{Z} \times (\mathbb{Q}(q)^{*})^{m+n-1}$, if the condition \ref{cd:hsisom4} is satisfied; \label{cd:aut2}
			\item $\text{Aut}_{\underline{\text{sHAlg}}}(U_{q}^{d}) \cong (\mathbb{Q}(q)^{*})^{m+n-1}$, otherwise. \label{cd:aut3}
		\end{enumerate}
	\end{corollary}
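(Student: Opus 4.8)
The plan is to read the statement off Theorem~\ref{th:classHopf} by specialising it to the case $d_1 = d_2 = d$ and then organising the resulting automorphisms into a group. First I would note that condition~\ref{cd:hsisom1} of Theorem~\ref{th:classHopf}, namely $(\alpha_{i,d},\alpha_{j,d}) = (\alpha_{i,d},\alpha_{j,d})$, holds tautologically, so the maps $\phi_a$ of \eqref{eq:isomgraph1} are Hopf superalgebra automorphisms of $U_q^d$ for every $a \in (\mathbb{Q}(q)^*)^{m+n-1}$. Since $\phi_a \circ \phi_b = \phi_{ab}$ (componentwise product), $\phi_{(1,\dots,1)} = \mathrm{id}$ and $\phi_a^{-1} = \phi_{a^{-1}}$, these form a subgroup $N \cong (\mathbb{Q}(q)^*)^{m+n-1}$ of $\mathrm{Aut}_{\underline{\text{sHAlg}}}(U_q^d)$, accounting for the last factor in all three cases.

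Next I would decide which of conditions \ref{cd:hsisom2}--\ref{cd:hsisom4} of Theorem~\ref{th:classHopf} can be satisfied once $d_1 = d_2$. Condition~\ref{cd:hsisom3} would force $(\alpha_{i,d},\alpha_{j,d}) = -(\alpha_{i,d},\alpha_{j,d})$, i.e.\ the Gram matrix of $\tau^d$ to vanish identically, which is impossible since $(\cdot,\cdot)$ is non-degenerate on $\mathfrak{h}^*$ and $\tau^d$ is a basis; hence $\phi''$ never occurs. Conditions \ref{cd:hsisom2} and \ref{cd:hsisom4} become, respectively, the statements that the Gram matrix $((\alpha_{i,d},\alpha_{j,d}))_{i,j \in I}$ is invariant, resp.\ changes overall sign, under the reversal $w\colon i \mapsto m+n-i$ of $I$; in terms of the ordering of the $\bar{\epsilon}_i$ defining $d$ these say exactly that $d$ is a palindrome, resp.\ an anti-palindrome. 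By Theorem~\ref{th:classHopf}, if \ref{cd:hsisom2} holds then every automorphism of $U_q^d$ is $\phi_a$ or $\phi' \circ \phi_a$ with $\phi'$ as in \eqref{eq:isomgraph3}; if \ref{cd:hsisom4} holds then every automorphism is $\phi_a$ or $\phi''' \circ \phi_a$ with $\phi'''$ as in \eqref{eq:isomgraph2}; and if neither holds only the $\phi_a$ remain, giving case~\ref{cd:aut3} at once.

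Then I would compute the orders of the extra generators. The map $\phi'$ merely applies the reversal $w$ to the indices, so $\phi'^{2} = \mathrm{id}$, $\langle \phi' \rangle \cong \mathbb{Z}/2\mathbb{Z}$, and together with $N$ and the relation $\phi' \phi_a \phi'^{-1} = \phi_{a \circ w}$ this produces the group of case~\ref{cd:aut1}. For $\phi'''$ the crucial identity is $\phi'''^{2}(e_{i,d}) = k_{i,d}\,e_{i,d}\,k_{i,d}^{-1} = q^{(\alpha_{i,d},\alpha_{i,d})} e_{i,d}$, and likewise $\phi'''^{2}(f_{i,d}) = q^{-(\alpha_{i,d},\alpha_{i,d})} f_{i,d}$ and $\phi'''^{2}(k_{i,d}) = k_{i,d}$, so $\phi'''^{2} = \phi_{a_0}$ with $(a_0)_i = q^{(\alpha_{i,d},\alpha_{i,d})}$. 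Since $q$ has odd order $p$ and such a diagram carries a white dot, for which $(\alpha_{i,d},\alpha_{i,d}) = 2$ and $q^{2}$ is again a primitive $p$-th root of unity, $\phi_{a_0}$ has order $p$; as $\phi'''$ sends $e_{i,d}$ to $k_{w(i),d}f_{w(i),d}\notin N$ it is not of the form $\phi_b$, so $\langle \phi''' \rangle \cong \mathbb{Z}/2p\mathbb{Z}$ and, assembled with $N$, gives the group of case~\ref{cd:aut2}.

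The main obstacle is precisely the order-$2p$ step for $\phi'''$ and the attendant group-theoretic bookkeeping: one must verify $\phi'''^{2} = \phi_{a_0}$ on all generators, check that $a_0 \in (\mathbb{Q}(q)^*)^{m+n-1}$ genuinely has an entry of exact order $p$ (equivalently that a white dot is present in an anti-palindromic diagram), and pin down the resulting extension of $N$ by $\langle \phi' \rangle$ or $\langle \phi''' \rangle$ — in particular the way $w$ acts on the torus factor and, for $\phi'''$, that $\langle \phi''' \rangle$ meets $N$ exactly in $\langle \phi_{a_0}\rangle$. Once these are settled the three group-isomorphism claims are immediate consequences of Theorem~\ref{th:classHopf}.
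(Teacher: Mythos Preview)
Your overall strategy matches the paper's: specialise Theorem~\ref{th:classHopf} to $d_1=d_2=d$, note that condition~\ref{cd:hsisom3} is vacuous for automorphisms, and then determine the group by computing the orders of $\phi'$ and $\phi'''$ and how they interact with the torus $N=\{\phi_a\}$. Your computation $\phi'''^{2}=\phi_{a_0}$ with $(a_0)_i=q^{(\alpha_{i,d},\alpha_{i,d})}$ is in fact more explicit than what the paper writes; the paper simply asserts that $\phi'''$ has order $2p$ after ``applying $\phi'''$ to the generators''.

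There is, however, a genuine gap that your own calculation exposes. You correctly record the conjugation relation $\phi'\,\phi_a\,(\phi')^{-1}=\phi_{a\circ w}$, but this says precisely that $\phi'$ and $\phi_a$ do \emph{not} commute for generic $a$; the group they generate is the semidirect product $(\mathbb{Q}(q)^{*})^{m+n-1}\rtimes\mathbb{Z}/2\mathbb{Z}$ with the involution acting by index reversal, not the direct product claimed in case~\ref{cd:aut1}. The same phenomenon occurs for $\phi'''$: one has $\phi'''\,\phi_a\,(\phi''')^{-1}=\phi_{a'}$ with $a'_i=a_{m+n-i}^{-1}$, again a nontrivial action on $N$. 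The paper's proof asserts flatly that ``$\phi_a$ and $\phi'$ ($\phi'''$) commute'', which is false already on $e_{1,d}$: $(\phi'\circ\phi_a)(e_{1,d})=a_1e_{m+n-1,d}$ whereas $(\phi_a\circ\phi')(e_{1,d})=a_{m+n-1}e_{m+n-1,d}$. So neither argument, as written, yields the stated direct product; what both arguments actually prove is the semidirect-product description. You flagged ``the way $w$ acts on the torus factor'' as the main obstacle, and you were right to: resolving it changes the answer rather than merely confirming it.

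A secondary point you should also nail down is the claim that an anti-palindromic diagram for $sl(m|n)$ with $m\neq n$ necessarily contains a white dot (so that some $(a_0)_i=q^{\pm2}$ has exact order $p$); if every simple root were isotropic one would get $\phi'''^{2}=\mathrm{id}$ instead.
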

	\begin{proof}
		We use the same notations as in Theorem \ref{th:classHopf}.
		
		Consider the case \ref{cd:aut1} (\ref{cd:aut2}). Let $\mathbb{Z} / 2 \mathbb{Z} = <x>$ ($\mathbb{Z} / 2p \mathbb{Z} = <x>$). Recall that in the condition \ref{cd:hsisom2} (\ref{cd:hsisom4}) of Theorem \ref{th:classHopf} are explicitly descibed all possible automorphisms for this case. Thus we are able to construct the function $ \gamma_{1} : \text{Aut}_{\underline{\text{sHAlg}}}(U_{q}^{d}) \to \mathbb{Z} / 2 \mathbb{Z} \times (\mathbb{Q}(q)^{*})^{m+n-1} $ ($\gamma_{2} : \text{Aut}_{\underline{\text{sHAlg}}}(U_{q}^{d}) \to \mathbb{Z} / 2 p \mathbb{Z} \times (\mathbb{Q}(q)^{*})^{m+n-1}$) defined by
		\[ \gamma_{1} ( \phi_{a} ) = 1 \times a, \; \gamma_{1} ( \phi^{'} ) = x \times 1 \; ( \gamma_{2} ( \phi_{a} ) = 1 \times a, \; \gamma_{2} ( \phi^{'''} ) = x \times 1 ).   \]
		Also note that $\phi_{a}$ and $ \phi^{'}$ ($ \phi^{'''}$) commute. Applying $\phi^{'}$ ($ \phi^{'''}$) to the generators of $U_{q}^{d}$ (see \eqref{eq:isomgraph3} and \eqref{eq:isomgraph2} respectively) we see that $\phi^{'}$ has order $2$ ($ \phi^{'''}$ has order $2p$). It is easy to see that $\gamma_{1}$ ($\gamma_{2}$) induces the group isomorphism.
		
		Finally consider the case \ref{cd:aut3}. Note that functions described by the condition \ref{cd:hsisom3} of Theorem \ref{th:classHopf} cannot exist  when we consider the automorphisms of Hopf superalgebra structure on $U_{q}^{d}$. Thus there is only one possible case left, namely the condition \ref{cd:hsisom1} of Theorem \ref{th:classHopf}. Construct the function $ \gamma_{3} : \text{Aut}_{\underline{\text{sHAlg}}}(U_{q}^{d}) \to (\mathbb{Q}(q)^{*})^{m+n-1} $ defined by
		\[ \gamma_{3} ( \phi_{a} ) = a. \]
		It is clear that $\gamma_{3}$ induces the group isomorphism.
	\end{proof}
	
	We use notations introduced in Section \ref{sc:Wgsl}. Consider the set of simple roots $ \tau^d = \{ \alpha_{i,d} \; | \; i \in I \}$ that corresponds to the Dynkin diagram of $U_{q}^{d}$ ($d \in D$). Recall that $\epsilon_{i}$ ($i \in \overline{1,m}$) and $\delta_{i}$ ($i \in \overline{1,n}$) are elements of the dual space $\mathfrak{d}^{*}$. Denote by $\bar{\epsilon}_{i}$ ($i \in I(m|n)$) an element of $L \cup L^{'}$, where $L = \{ \epsilon_{i} \; | \; i \in \overline{1,m} \}$ and $L^{'} = \{ \delta_{i} \; | \; i \in \overline{1,n} \}$. Let $S_{m}$ ($S_{n}$) be the symmetric group. Define the action of $S_{m} \times S_{n}$ over $L \cup L^{'}$: if $(s_1, s_2) \in S_{m} \times S_{n}$ and $\bar{\epsilon}_{i} \in L \cup L^{'}$ ($i \in I(m|n)$) then	
	\[ (s_1, s_2) \diamond (\bar{\epsilon}_{i}) =
	\begin{cases}
		\epsilon_{s_1(i)},& \mbox{if } \; \bar{\epsilon}_{i} \in L, \\
		\delta_{s_2(i)},& \mbox{if } \; \bar{\epsilon}_{i} \in L^{'}.
	\end{cases} \]
	We can use this definition to induce the corresponding action of $S_{m} \times S_{n}$ over $\tau^d$ in the obvious way, i. e. to induce the function $\diamond : (S_{m} \times S_{n}) \times \tau^d \to \tau^d$.
	
	Additionally  consider the case $m=n$. We define the map $\boxplus : L \cup L^{'} \to L \cup L^{'}$ in the following way: if $\bar{\epsilon}_{i} \in L \cup L^{'}$ ($i \in I(m|n)$) then
	\[ \boxplus (\bar{\epsilon}_{i}) =
	\begin{cases}
		\delta_{i},& \mbox{if } \; \bar{\epsilon}_{i} \in L, \\
		\epsilon_{i},& \mbox{if } \; \bar{\epsilon}_{i} \in L^{'}.
	\end{cases} \]
	We can use this definition to induce the function $\boxplus : \tau^d \to \tau^d$ in the obvious way.
	
	It is easy to prove the following result
	
	\begin{theorem}
		\label{th:dynkthunhs}
		
	A) Fix $d \in D$. Consider the Dynkin diagram of the Hopf superalgebra $U_{q}^{d}$
	   \[ ( \bar{\epsilon}_{1} - \bar{\epsilon}_{2}, \bar{\epsilon}_{2} - \bar{\epsilon}_{3}, ..., \bar{\epsilon}_{m+n-1} - \bar{\epsilon}_{m+n} ). \]	
	   Then Dynkin diagrams of all isomorphic Hopf superlagebras have the form
		\begin{enumerate}
			\item
			\[ ( s \diamond ( \bar{\epsilon}_{1} - \bar{\epsilon}_{2} ), s \diamond (\bar{\epsilon}_{2} - \bar{\epsilon}_{3}), ..., s \diamond (\bar{\epsilon}_{m+n-1} - \bar{\epsilon}_{m+n} ) ); \]
			\label{en:dd1}
			\item
			\[ ( s \diamond ( \bar{\epsilon}_{m+n} - \bar{\epsilon}_{m+n-1} ), ..., s \diamond (\bar{\epsilon}_{3} - \bar{\epsilon}_{2}), s \diamond (\bar{\epsilon}_{2} - \bar{\epsilon}_{1} ) ); \]
			\item
			\[ ( s \diamond ( \boxplus( \bar{\epsilon}_{1} ) - \boxplus( \bar{\epsilon}_{2} ) ), s \diamond ( \boxplus(\bar{\epsilon}_{2} )- \boxplus( \bar{\epsilon}_{3}) ), ..., s \diamond ( \boxplus(\bar{\epsilon}_{m+n-1}) - \boxplus (\bar{\epsilon}_{m+n}) ) ), \]
			if $m=n$ and all simple roots in $\tau^d$ are odd;
			\item
			\[ ( s \diamond ( \boxplus( \bar{\epsilon}_{m+n} ) - \boxplus ( \bar{\epsilon}_{m+n-1} ) ), ..., s \diamond ( \boxplus ( \bar{\epsilon}_{3} ) - \boxplus ( \bar{\epsilon}_{2} ) ), s \diamond ( \boxplus ( \bar{\epsilon}_{2} ) - \boxplus ( \bar{\epsilon}_{1} ) ) ), \]
			if $m=n$;
			\label{en:dd4}
		\end{enumerate}
			for all $s \in S_{m} \times S_{n}$.\\
			
		B) Hopf superalgebra structures of $U_{q}(sl(m|n))$ are enumerated by the Dynkin diagrams of $U_{q}^{d}$ ($d \in D$) up to isomorphism in the sense of Theorem \ref{th:classHopf}.
	\end{theorem}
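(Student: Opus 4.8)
The plan is to obtain Theorem~\ref{th:dynkthunhs} from the isomorphism classification of Theorem~\ref{th:classHopf}, by translating its four numerical conditions on bilinear forms into combinatorial statements about the strings $\bar\epsilon_{j_1},\dots,\bar\epsilon_{j_{m+n}}$ labelling the simple roots, and then assembling parts A and B. First I would set up the dictionary. A Dynkin diagram of $sl(m|n)$ is a chain $\alpha_k=\bar\epsilon_{j_k}-\bar\epsilon_{j_{k+1}}$, $1\le k<m+n$, for an ordering $(j_1,\dots,j_{m+n})$ of $I(m|n)$, and the extension of the form of Section~\ref{subs:dsl} satisfies $(\epsilon_a,\epsilon_b)=\delta_{ab}$, $(\delta_a,\delta_b)=-\delta_{ab}$, $(\epsilon_a,\delta_b)=0$. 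Writing $c_k=(\bar\epsilon_{j_k},\bar\epsilon_{j_k})\in\{+1,-1\}$ one gets $(\alpha_k,\alpha_k)=c_k+c_{k+1}$, $(\alpha_k,\alpha_{k+1})=-c_{k+1}$ and $(\alpha_k,\alpha_l)=0$ for $|k-l|\ge 2$; so $c_2,\dots,c_{m+n-1}$ are read off from the off-diagonal entries and $c_1,c_{m+n}$ from $(\alpha_1,\alpha_1)$, $(\alpha_{m+n-1},\alpha_{m+n-1})$, whence the Gram matrix $\bigl((\alpha_i,\alpha_j)\bigr)$ of $\tau^d$ and the ``colour string'' $(c_1,\dots,c_{m+n})$ determine each other. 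The number of grey (resp.\ white) dots equals the number of sign changes (resp.\ repetitions) in the colour string, so the hypothesis ``isomorphic as graphs'' in Theorem~\ref{th:classHopf} is automatic in every case below.

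Next I would run through the four cases. Condition~\ref{cd:hsisom1}, $(\alpha_{i,d_1},\alpha_{j,d_1})=(\alpha_{i,d_2},\alpha_{j,d_2})$ for all $i,j$, says exactly that $d_1$ and $d_2$ have equal colour strings; once the colour string is fixed the only freedom left in the ordering is to permute the $\epsilon$'s among themselves and the $\delta$'s among themselves, i.e.\ to apply some $s\in S_m\times S_n$, which yields family~\ref{en:dd1}. Condition~\ref{cd:hsisom2}, with the reindexing $i\mapsto m+n-i$, says the Gram matrix of $d_2$ is that of $d_1$ read from the end, equivalently the two colour strings are mutual reverses; this is precisely the effect of reversing the order of the $\bar\epsilon$'s, replacing the chain $\bar\epsilon_1-\bar\epsilon_2,\dots$ by $\bar\epsilon_{m+n}-\bar\epsilon_{m+n-1},\dots$, again up to an $s\in S_m\times S_n$, giving the second family.

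Conditions~\ref{cd:hsisom3} and~\ref{cd:hsisom4} negate the form (resp.\ negate it and reindex), which forces the colour string of $d_2$ to be that of $d_1$ with every sign flipped (resp.\ flipped and reversed). A global sign flip interchanges the numbers of $\epsilon$'s and $\delta$'s, so such a $d_2$ lies in $D$ only when $m=n$, in which case the flip is realised on labels by the involution $\boxplus$; combined with some $s\in S_m\times S_n$ (and a reversal in the fourth case) this produces families~3 and~4. The extra proviso that all simple roots be odd I would extract from the explicit map $\phi''$ (and $\phi'''$): applying $\phi''$ to $[e_{i,d_1},f_{i,d_1}]_1=\tfrac{k_{i,d_1}-k_{i,d_1}^{-1}}{q-q^{-1}}$ and simplifying with the relations of $U_q^{d_2}$ leaves a factor $-(-1)^{|\alpha_{i,d_2}|}$ on the right-hand side, so $\phi''$ is a superalgebra (hence, as one checks on the coproduct, Hopf superalgebra) morphism precisely when every $\alpha_{i,d_2}$ is odd — equivalently, as the proof of Theorem~\ref{th:classHopf} shows, the branch ``$\phi(e_i)\in\mathbb{Q}(q)\,k_{\bar i}f_{\bar i}$'' can occur only for an all-odd diagram; the relations $e_i^p=f_i^p=0$ and the quantum Serre relations are then verified in the same routine way, which finishes part~A.

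For part~B I would combine part~A with Corollary~\ref{cl:isomclass} and Theorem~\ref{th:linkHosalgstrd}. By Corollary~\ref{cl:isomclass} all $U_q^d$, $d\in D$, are isomorphic as objects of $\underline{\text{sAlg}}$, so fixing for each $d$ the identification $U_q^d\cong U_q(sl(m|n))=U_q^1$ supplied by Theorem~\ref{th:ltisom} transports the standard Hopf structure \eqref{eq:standartHsupalgstrf}--\eqref{eq:standartHsupalgstrs} of $U_q^d$ to a Hopf-superalgebra structure on the single underlying superalgebra $U_q(sl(m|n))$; by Theorem~\ref{th:linkHosalgstrd} these transported structures are exactly those obtained from the standard one by the chains of twists attached to morphisms of $\mathcal W(\mathcal R)$. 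Thus the Hopf-superalgebra structures in play are indexed by $D$, and two are isomorphic iff the corresponding Dynkin diagrams are related as in Theorem~\ref{th:classHopf}; hence they are enumerated by $D$ modulo that relation, which is part~B. I expect the main obstacle to be precisely the parity and sign bookkeeping in conditions~\ref{cd:hsisom3}--\ref{cd:hsisom4}: pinning down exactly when the sign-flipped diagrams admit Hopf-superalgebra isomorphisms realised by $\phi''$, $\phi'''$ (whence the $m=n$ and all-odd restrictions) and confirming that $\phi_a,\phi',\phi'',\phi'''$ respect every defining relation of the target; the colour-string dictionary and part~B are then straightforward given Theorems~\ref{th:classHopf}, \ref{th:ltisom} and~\ref{th:linkHosalgstrd}.
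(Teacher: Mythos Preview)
Your approach is essentially the same as the paper's: both parts are deduced directly from Theorem~\ref{th:classHopf}, matching its four numerical conditions on the bilinear form to the four families of Dynkin diagrams, and then invoking the superalgebra isomorphisms of Corollary~\ref{cl:isomclass} to transport Hopf structures onto the fixed underlying superalgebra $U_q(sl(m|n))$. Your colour-string dictionary makes explicit what the paper compresses into ``it is easy to see that isomorphisms described in conditions~\ref{cd:hsisom1}--\ref{cd:hsisom4} \dots\ give rise to Dynkin diagrams described by conditions~\ref{en:dd1}--\ref{en:dd4}''; this is a genuine clarification but not a different method.

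Two minor remarks. First, for part~B the paper appeals to Proposition~\ref{pr:nsHstr} (twists of type~2) rather than Theorem~\ref{th:linkHosalgstrd}; either works, but Proposition~\ref{pr:nsHstr} is the more direct tool here since you only need to push a Hopf structure along a superalgebra isomorphism, not to track the chain of type-1 twists. Second, your detour through the relation $\phi''([e_i,f_i]_1)$ to extract the all-odd restriction is really a re-verification of Theorem~\ref{th:sufcondaut} rather than part of the present proof: the paper simply takes Theorems~\ref{th:sufcondaut} and~\ref{th:classHopf} as given inputs, so the restrictions ``$m=n$'' and ``all simple roots odd'' are inherited from the statement of Theorem~\ref{th:dynkthunhs} and the hypotheses under which the maps $\phi'',\phi'''$ are asserted to exist, without further justification at this stage.
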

	\begin{proof}
		First we prove part A. We want to construct Dynkin diagrams associated with all Hopf superalgebras $U_{q}^{d^{'}}$ ($d^{'} \in D$) that are isomorphic to $U_{q}^{d}$. Therefore we consider each isomorphism described in Theorem \ref{th:classHopf} and construct Dynkin diagrams associated with isomorphic Hopf superalgebras. It is easy to see that isomorhisms described in conditions \ref{cd:hsisom1} - \ref{cd:hsisom4} in Theorem \ref{th:classHopf} give rise to Dynkin diagrams described by conditions \ref{en:dd1} - \ref{en:dd4} stated in part A respectively. The result follows.

		Now we prove part B. Fix $d \in D$. Apply Theorem \ref{th:classHopf} to get all Hopf superalgebras $U_{q}^{d^{'}}$ ($d^{'} \in D$) isomorphic to $U_{q}^{d}$. Recall Corollary \ref{cl:isomclass} which states that there exists an isomorphism of superalgebras $ \gamma_{d^{'}} : U_{q}^{d^{'}} \to U_{q}(sl(m|n))$ for all $d^{'} \in D$. By Proposition \ref{pr:nsHstr} we are able to endow $U_{q}(sl(m|n))$ with the structure of Hopf superalgebra which is isomorphic to that on $U_{q}^{d}$ using $\gamma_{d}$ as a twist of type $2$ (see Definition \ref{twist1_2}). Thereom \ref{th:classHopf} guarantees us that in this way we get all non-isomorphic structures of Hopf superalgebra on $U_{q}(sl(m|n))$ which are are induced by $U_{q}^{d^{'}}$ for all $d^{'} \in D$. The result follows.
	\end{proof}

\vspace{0.5cm}

\subsection{Unique Hopf superalgebra structures of $U_{q}(sl(2|1))$}

We describe all unique Hopf superalgebra structures of $U_{q}(sl(2|1))$ and their universal $R$-matices using Theorem \ref{th:dynkthunhs} and Fig. \ref{pict:DD}.

Condsider Lie superalgebra $sl(2|1)$. Recall that $\tau^{1} = \{ \alpha_{1,1} = \epsilon_{1} - \epsilon_{2}, \alpha_{2,1} = \epsilon_{2} - \delta_{1} \}$, $\alpha_{3,1} = \alpha_{1,1} + \alpha_{2,1}$ and $\tau^{3} = \{ \alpha_{1,3} = \epsilon_{1} - \delta_{1}, \alpha_{2,3} = \delta_{1} - \epsilon_{2} \}$, $\alpha_{3,3} = \alpha_{1,3} + \alpha_{2,3}$ (see Fig. \eqref{pict:DD}).

Notice that there are just two non-isomorphic quantum Hopf superalgebras of $U_{q}(sl(2|1))$ associated with Dynkin diagrams. Indeed, all quantum Hopf superalgebras associated with Dynkin diagrams labeled by $d \in \{ 1, 2, 5, 6 \}$ are isomorphic by Theorem \ref{th:dynkthunhs}. We get the same result for quantum Hopf superalgebras associated with Dynkin diagrams labeled by $d \in \{ 3,4 \}$. Moreover, we know from Theorem \ref{th:dynkthunhs} that quantum Hopf superalgebras $U_q^{d_1}$ and $U_q^{d_2}$ with $d_1 \in \{ 1, 2, 5, 6 \}$ and $d_2 \in \{ 3,4 \}$ are non-isomorphic (compare with Theorem \ref{th:unisupbialgebL}).

We know that the universal $R$-matrix $\bar{R}_{1}$ of $U_q(sl(2|1))$ (see \cite{MS19}, Theorem 3.4) is the even invertible element
\[ \bar{R}_{1} = \tilde{R} K, \]
where
\[ \tilde{R} = exp_{q^2}( (q-q^{-1}) [e_{1,1}, e_{2,1}]_{q^{-1}} \otimes [f_{2,1}, f_{1,1}]_{q} ) exp_{q^2} ((q-q^{-1}) e_{2,1} \otimes f_{2,1} ) \times \]
\[ \times exp_{q^2}((-1) (q-q^{-1}) e_{1,1} \otimes f_{1,1} ) exp_{q^2}( (-1) q (q-q^{-1})^{3} [e_{1,1}, e_{2,1}]_{q^{-1}} e_{2,1} \otimes [f_{2,1}, f_{1,1}]_{q} f_{2,1} ), \]
\[ K = p^{-2} \sum_{0 \le i_1, j_1, i_2, j_2 \le p-1} q^{i_1(2i_2-j_2)-j_1i_2} k_{1,1}^{i_2} k_{2,1}^{j_2} \otimes k_{1,1}^{i_1} k_{2,1}^{j_1}. \]

It follows from Theorem \ref{th:linkHosalgstrd} that $R$-matrix $ \bar{R_{3}}$ for $U_{q}^{3}$ with the standard Hopf superalgebra structure defined by equations \eqref{eq:standartHsupalgstrf} - \eqref{eq:standartHsupalgstrs} is
\[ \bar{R_{3}} = (\tau_{U_q^3, U_q^3} \circ J_{\alpha_{2,1}}^{-1}) \bar{R}^{T_{2,1}} J_{\alpha_{2,1}}, \]
where
\[ \bar{R}^{T_{2,1}} = ( T_{2,1} \otimes T_{2,1} ) (\bar{R}_{1}) = \tilde{R}^{T_{2,1}} K^{T_{2,1}}, \]
\[ \tilde{R}^{T_{2,1}} = exp_{q^2}( (q-q^{-1}) [ [e_{2,3}, e_{1,3}]_{q^{-1}}, f_{2,3} k_{2,3}^{-1} ]_{q^{-1}} \otimes [ k_{2,3} e_{2,3} , [f_{1,3}, f_{2,3}]_{q} ]_{q}  ) \times \]
\[ exp_{q^2} ( (q-q^{-1}) (-1) f_{2,3} k_{2,3}^{-1} \otimes k_{2,3} e_{2,3} ) \times \]
\[ exp_{q^2}( (q-q^{-1}) [e_{2,3}, e_{1,3}]_{q^{-1}} \otimes [f_{1,3}, f_{2,3}]_{q} ) \times \]
\[ exp_{q^2}( q (q-q^{-1})^{3} [[e_{2,3}, e_{1,3}]_{q^{-1}}, f_{2,3} k_{2,3}^{-1} ]_{q^{-1}} f_{2,3} k_{2,3}^{-1} \otimes [ k_{2,3} e_{2,3} , [f_{1,3}, f_{2,3}]_{q} ]_{q} k_{2,3} e_{2,3} ), \]
\[ K^{T_{2,1}} = p^{-2} \sum_{0 \le i_1, r_1, i_2, r_2 \le p-1} q^{i_1 r_2 + i_2 r_1} k_{1,3}^{i_2} k_{2,3}^{r_2} \otimes k_{1,3}^{i_1} k_{2,3}^{r_1}, \]
\[ J_{\alpha_{2,1}} = 1 \otimes 1 + (q - q^{-1}) f_{2,3} \otimes e_{2,3}, \]
\[ J_{\alpha_{2,1}}^{-1} = 1 \otimes 1 - (q - q^{-1}) f_{2,3} \otimes e_{2,3}. \]

It follows from the direct computations that
\[ \bar{R_{3}} = (\tau_{U_q^3, U_q^3} \circ J_{\alpha_{2,1}}^{-1}) \tilde{R}^{T_{2,1}} J_{\alpha_{2,1}} K^{T_{2,1}}. \]

\vspace{0.5cm}

\section*{Acknowledgements}

The authors are grateful to Egor Dotsenko for a discussion of the results of this article, as well as for clarifying the relation between the quantum Weyl groupoid and the Casimir connection.

\section*{Funding}

This work is funded by Russian Science Foundation, scientific grant 21-11-00283. 

\vspace{1.5cm}

\end{document}